\newtheorem{defi}{Definition}[section]
\newtheorem{lem}[defi]{Lemma}
\newtheorem{thm}[defi]{Theorem}
\newtheorem{cor}[defi]{Corollary}
\newtheorem{prop}[defi]{Proposition}
\theoremstyle{remark}
\newtheorem{rem}[defi]{Remark}
\numberwithin{equation}{section}
\def\Ii{\ensuremath{\mathcal{I}}}
\def\Cc{\ensuremath{\mathscr{C}}}
\def\C{\ensuremath{\mathcal{C}}}
\def\Cb{\ensuremath{C_{\rm b}}}
\def\D{\ensuremath{\mathscr{D}}}
\def\Dd{\ensuremath{\mathfrak{D}}}
\def\dP{\ensuremath{d_{\rm P}}}
\def\dMGP{\ensuremath{d_{\rm mGP}}}
\def\E{\ensuremath{\mathbb{E}}}
\def\ep{\varepsilon}
\def\p{\ensuremath{\mathcal{P}}}
\def\P{\ensuremath{\mathbb{P}}}
\def\Q{\ensuremath{\mathbb{Q}}}
\def\M{\ensuremath{\mathbb{M}}}
\def\Mb{\ensuremath{M_{\rm b}}}
\def\N{\ensuremath{\mathbb{N}}}
\def\Nn{\ensuremath{\mathcal{N}}}
\def\R{\ensuremath{\mathbb{R}}}
\def\S{\ensuremath{\mathcal{S}}}
\def\U{\ensuremath{\mathbb{U}}}
\def\Uu{\ensuremath{\mathfrak{U}}}
\def\UU{\ensuremath{\mathcal{U}}}
\def\UUerg{\ensuremath{\mathcal{U}^{\rm erg}}}
\newcommand{\reff}[1]{(\ref{#1})}
\newcommand{\I}[1]{\mathop {\mathbf{1}{\left\{ #1\right\}}}}
\newcommand{\abs}[1]{\mathop {\left|{ #1}\right|}}
\newcommand{\hparagraph}[1]{\paragraph{#1}\mbox{}\\}
\begin{document}
\label{Sampl}
\title{A representation for exchangeable coalescent trees and generalized tree-valued Fleming-Viot processes}
\author{Stephan Gufler\thanks{Technion, Faculty of Industrial Engineering and Management, Haifa 3200003, Israel, \texttt{stephan.gufler@gmx.net}}
}
\maketitle
\begin{abstract}
We give a de Finetti type representation for exchangeable random coalescent trees (formally described as semi-ultrametrics) in terms of sampling iid sequences from marked metric measure spaces. We apply this representation to define versions of tree-valued Fleming-Viot processes from a $\Xi$-lookdown model. As state spaces for these processes, we use, besides the space of isomorphy classes of metric measure spaces, also the space of isomorphy classes of marked metric measure spaces and a space of distance matrix distributions. This allows to include the case with dust in which the genealogical trees have isolated leaves. 
\end{abstract}
{\small
\emph{Keywords:} Ultrametric, jointly exchangeable array, marked metric measure space, dust,
tree-valued Fleming-Viot process, lookdown model, $\Xi$-coalescent.\\
\emph{AMS MSC 2010:} Primary 60G09, Secondary 60J25, 60K35, 92D10}
{\scriptsize \tableofcontents }

\section{Introduction}
\subsection{Some background on coalescent trees, ultrametrics, and metric measure spaces}
\label{Sampl:sec:backgr}
In population genetics, coalescents are common models for the genealogy of a sample from a population.
The Kingman coalescent \cite{K82-SPA} is a partition-valued process in which each individual of the sample forms its own block at time $0$, and as we look into the past, each pair of blocks merges independently at constant rate.
These blocks stand for the families of individuals that have a common ancestor at given times in the past.
Generalizations of the Kingman coalescent include
the $\Lambda$-coalescent (Pitman \cite{Pitman99}, Sagitov \cite{Sagitov99}, Donnelly and Kurtz \cite{DK99}) where multiple blocks are allowed to merge to a single block at the same time, and the $\Xi$-coalescent (Möhle and Sagitov \cite{MS01}, Schweinsberg \cite{Schw00}) where several clusters of blocks may also merge simultaneously.

A (semi-)ultrametric $\rho$ is a (semi-)metric that satisfies the strong triangle inequality
$\max\{\rho(x,y), \rho(y,z)\} \geq\rho(x,z)$. A realization of a coalescent for an infinite sample can be expressed as a càdlàg path $(\pi_t,t\in\R_+)$ with values in the space of partitions of $\N$ such that $\pi_t$ is a coarsening of $\pi_s$ for all $s\leq t$. We assume that for each pair of integers, there is a time $t$ such that the elements of this pair are in a common block of $\pi_t$. Then $(\pi_t,t\in\R_+)$ can equivalently be expressed as a semi-ultrametric $\rho$ on $\N$ such that for all $t\in\R_+$ and $i,j\in\N$,
\begin{equation}
\label{Sampl:eq:coal-ultrametr}
\rho(i,j)\leq 2t\quad\text{if and only if $i$ and $j$ are in the same block of $\pi_t$,}
\end{equation}
and \reff{Sampl:eq:coal-ultrametr} yields a one-to-one correspondence between these càdlàg paths and the semi-ultrametrics on $\N$, cf.\ \cite{E08}*{Example 3.41} and \cite{EGW15}*{p.\,262}.

Evans \cites{E00} studies the completion of the random ultrametric space associated with the Kingman coalescent which he endows with a probability measure such that the mass on each ball is given by the asymptotic frequency of the corresponding family, and a class of more general coalescents is studied by Berestycki et al.\ \cite{BBS08}.

\begin{rem}
\label{Sampl:rem:ultrametr}
Let us briefly recall the well-known correspondence between ultrametric spaces and real trees to which we will refer to explain main concepts in this article.
A real tree is a metric space $(T,d)$ that is tree-like in the sense that
(i) no subspace is homeomorphic to the unit circle, and (ii)
for each $x,y\in T$, there exists an isometry $\iota$ from the real interval $[0,d(x,y)]$ to $T$ with $\iota(0)=x$ and $\iota(d(x,y))=y$, see e.\,g.\ Evans \cite{E08} for an overview.
An ultrametric space $(X,\rho)$ can be isometrically embedded into the real tree $(T,d)$ that is obtained by identifying the elements with distance zero of the semi-metric space $(\R_+\times X,d)$ given by $d((s,i),(t,j))=\max\{\rho(i,j)-s-t,|s-t|\}$. Then $T$ equals the set $\mathbf{U}^0$ on in \cite{EGW15}*{p.\,262} with $X=\N$ and the metrics $d$ here and in \cite{EGW15}*{p.\,262} coincide up to a factor $2$.
Clearly, $(X,\rho)$ is isometric to the subspace $\{0\}\times X$ of the leaves of $(T,d)$.
For a semi-ultrametric space $(X,\rho)$, we identify the elements with distance zero to obtain an ultrametric space which we associate with a real tree $(T,d)$ as above. A related embedding of an ultrametric space is given in \cite{Hughes}*{Section 6}.
\end{rem}

As in Remark~\ref{Sampl:rem:ultrametr}, a semi-ultrametric on $\N$ can be considered as an infinite tree whose leaves are labeled by the elements of $\N$. Often these labels are not relevant, for instance, when they only record the order in which iid samples from a population are drawn.
To remove the labels, we could pass to the isometry class. However, the asymptotic block frequencies in the coalescent given by an ultrametric on $\N$ are not determined by the isometry class, as one may apply an infinite permutation without changing the isometry class. To retain just this information besides the metric structure, we can take a measure-preserving isometry class of the completion of the ultrametric space that is endowed with a probability measure that charges each ball with the asymptotic frequency of the corresponding block, if such a probability measure exists. This probability measure can equivalently be described as the weak limit of the uniform probability measures on the individuals $1,\ldots,n$, as $n\to\infty$. Then we obtain the description by isomorphy classes of metric measure spaces of Greven, Pfaffelhuber, and Winter \cite{GPW09} that was applied to $\Lambda$-coalescents in the dust-free case. We speak of the dust-free case if the semi-ultrametric space has no isolated points, which means that the coalescent tree has no isolated leaves. Greven, Pfaffelhuber, and Winter \cite{GPW09} also show that their approach is not directly applicable to $\Lambda$-coalescents with dust. The most elementary example for the case with dust is the star-shaped coalescent which starts in the partition into singleton blocks which all merge into a single block at some instant. The associated ultrametric on $\N$ induces the discrete topology. Here the uniform probability measures on $1,\ldots,n$ do not converge weakly as they converge vaguely to the zero measure.

A triple $(X,r,\mu)$ that consists of a complete and separable metric space $(X,r)$ and a probability measure $\mu$ on the Borel sigma algebra on $X$ is called a metric measure space. For a metric measure space $(X,r,\mu)$, one can consider the matrix $(r(x(i),x(j)))_{i,j\in\N}$ of the distances between $\mu$-iid samples $(x(i))_{i\in\N}$. The distribution of $(r(x(i),x(j)))_{i,j\in\N}$ is called the distance matrix distribution\label{Sampl:not:dmd} of $(X,r,\mu)$. By the Gromov reconstruction theorem (see Theorem 4 of Vershik \cite{Vershik02}), there exists a measure-preserving isometry between the supports of the measures of any two metric measure spaces that have the same distance matrix distribution, in which case we call them isomorphic.

We view a random semi-metric $\rho$ on $\N$ as the random matrix $(\rho(i,j))_{i,j\in\N}$, and we call it exchangeable if $(\rho(i,j))_{i,j\in\N}$ is distributed as $(\rho(p(i),p(j)))_{i,j\in\N}$ for each (finite) permutation $p$ of $\N$. Under an appropriate condition which we interpret as dust-freeness in Remark \ref{rem:mm-Vershik}, Vershik \cite{Vershik02}*{Theorem 5} associates with any typical realization of an exchangeable (and ergodic) random semi-metric on $\N$ a metric measure space whose distance matrix distribution is the distribution of this semi-metric. In the next subsection, we discuss an extension of such a representation to the case with dust.

\subsection{The sampling representation}
\label{Sampl:sec:intr-sampl}
We give a representation for all exchangeable random semi-ultrametrics on $\N$ in terms of sampling from random marked metric measure spaces.
Marked metric measure spaces are introduced in Depperschmidt, Greven, and Pfaffelhuber \cite{DGP11}.
A $(\R_+)$-marked metric measure space is a triple $(X,r,m)$ that consists of a complete and separable metric space $(X,r)$ and a probability measure $m$ on the Borel sigma algebra on the product space $X\times\R_+$.
The marked distance matrix distribution\label{Sampl:not:mdm} of a marked metric measure space $(X,r,m)$ is defined as the distribution of $((r(x(i),x(j)))_{i,j\in\N},(v(i))_{i\in\N})$ where $(x(i),v(i))_{i\in\N}$ is an $m$-iid sequence in $X\times\R_+$. Marked metric measure spaces with the same marked distance matrix distribution are called isomorphic.

In the present article, we use marked metric measure spaces to obtain from a random variable $(\tilde r,\tilde v)$ that has the marked distance matrix distribution of a marked metric measure space an exchangeable semi-metric $\tilde\rho$ on $\N$ by
\begin{equation*}
\tilde\rho(i,j)=(\tilde r(i,j)+\tilde v(i)+\tilde v(j))\I{i\neq j}.
\end{equation*}
We call the distribution of $(\tilde\rho(i,j))_{i,j\in\N}$ the distance matrix distribution of the marked metric measure space.
The basic result in this article (stated in Theorem \ref{thm:repr} below) is that every exchangeable semi-ultrametric $\rho$ on $\N$ can be represented as the outcome of a two-stage random experiment, where we have the isomorphy class $\chi$ of a random marked metric measure space in the first stage, and we sample $(\rho(i,j))_{i,j\in\N}$ from this marked metric measure space according to its distance matrix distribution in the second stage.

We construct $\chi$ realization-wise from the exchangeable semi-ultrametric $\rho$: the key idea is to decompose the tree that is associated with a realization of $\rho$ into the external branches and the remaining subtree.
Here we define that an external branch consists only of the leaf if that leaf corresponds to an integer that has $\rho$-distance zero to another integer.
In the marked metric measure space, the marks encode the external branch lengths, and the metric space describes the remaining subtree.
We call the semi-ultrametric dust-free if the external branches all have length zero a.\,s. In this case, the marked metric measure space can also be replaced by a metric measure space (as in Corollary \ref{cor:repr}).
We prove Theorem~\ref{thm:repr} in Section \ref{sec:proofs}.
In Section~\ref{Sampl:sec:dist-dec}, we formulate the decomposition at the external branches in terms of semi-ultrametrics.

The representation for exchangeable semi-ultrametrics from Theorem \ref{thm:repr} can also be seen in the more general but less explicit contexts of the ergodic decomposition (Section \ref{sec:ergodic}) and the Aldous-Hoover-Kallenberg representation (see e.\,g.\ \cite{Kal05}*{Section 7}).
In the representation result outlined above, the distance matrix distribution of the isomorphy class $\chi$ of the marked metric measure space is the ergodic component in whose support the realization $\rho$ lies. The ergodic component is also characterized by $\chi$ itself, or in the dust-free case by the isomorphy class of a metric measure space. The finite analog of the aforementioned ergodic decomposition is that a (discrete) random tree whose leaves are labeled exchangeably can be obtained by first drawing the random unlabeled tree and then sampling the labels of the leaves uniformly without replacement.

We mention that Evans, Grübel, and Wakolbinger \cite{EGW15} also decompose real trees into the external branches and the remaining subtree to give a representation of the elements of the Doob-Martin boundary of Rémy's algorithm in terms of sampling from a weighted real tree and an additional structure.
In \cite{EGW15}*{Section 7}, a sampling representation for exchangeable ultrametrics is considered (see Remark~\ref{Sampl:rem:EGW}).

\subsection{Evolving genealogies}
\label{Sampl:sec:intr-evol}
In Section~\ref{Sampl:sec:tv}, we lay the foundation for our study of evolving genealogies by considering a general time-homogeneous Markov process with values in the space of semi-ultrametrics on $\N$; this process describes evolving leaf-labeled trees. Assuming that the state at each time is exchangeable, we map this process realization-wise to the processes of the ergodic components. We express these ergodic components as (isomorphy classes of) metric measure spaces and marked metric measure spaces, and as distance matrix distributions, respectively.
Here we use the representation result for exchangeable semi-ultrametrics.
This approach characterizes the processes of the ergodic components up to null events only at countably many time points, i.\,e.\ as versions, as we discuss in Remark \ref{rem:version}.
Using the criterion of Rogers and Pitman \cite{RP81}*{Theorem 2},
we deduce that these image processes are also Markovian, and we describe them by well-posed martingale problems. This is an example of Markov mapping in the sense
of Kurtz \cite{Kurtz98}, and Kurtz and Nappo \cite{KurtzNappo11}.

In Sections~\ref{Sampl:sec:ld-gen} --~\ref{Sampl:sec:dec-proc}, we study a concrete Markov process with values in the space of semi-ultrametrics, namely the process given by the evolving genealogical trees in a lookdown model with simultaneous multiple reproduction events.
Lookdown models were introduced by Donnelly and Kurtz \cites{DK96,DK99} to represent measure-valued processes along with their genealogy, see also e.\,g.\ Etheridge and Kurtz \cite{EK14} and Birkner et al.\ \cite{BBMST09}. A lookdown model can be seen as a (possibly) infinite population model in which
each individual at each time is assigned a level.
The role of this level is model-inherent, namely to order the individuals such that the restriction of the model to the first finitely many levels is well-behaved (i.\,e.\ only finitely many reproduction events are visible in bounded time intervals) and that the modeled quantity (e.\,g.\ types, genealogical distances) is exchangeable.
In \cite{DK99} and in the present article, the level is the rank among the individuals at the respective time according to the time of the latest descendant. 
Although the levels in finite restrictions of the lookdown model differ from the labels in the Moran model,
the processes of the unlabeled genealogical trees coincide which is used to study the length of the genealogical trees in Pfaffelhuber, Wakolbinger, and Weisshaupt \cite{PWW11} and Dahmer, Knobloch, and Wakolbinger \cite{DKW14}.

In Section~\ref{Sampl:sec:TVFV}, we remove the labels from the evolving genealogical trees in the infinite lookdown model by applying the result from Section~\ref{Sampl:sec:tv} to the process from Sections~\ref{Sampl:sec:ld-gen} --~\ref{Sampl:sec:dec-proc}. We call the processes of the ergodic components tree-valued Fleming-Viot processes, regardless which one of the three state spaces we use. The tree-valued Fleming-Viot process with values in the space of isomorphy classes of metric measure spaces is introduced in the case with binary reproduction events (which is associated with the Kingman coalescent) by Greven, Pfaffelhuber, and Winter \cite{GPW13} as the solution of a well-posed martingale problem that is the limit in distribution of corresponding processes read off from finite Moran models. In \cite{GPW13}*{Remark 2.20}, a construction of (a version of) this process from the lookdown model of Donnelly and Kurtz \cite{DK96} is outlined. The aim in the present article regarding tree-valued Fleming-Viot process is the generalization to the case with dust.
We remark that tree-valued Fleming-Viot processes with mutation and selection are studied in Depperschmidt, Greven, and Pfaffelhuber \cites{DGP12,DGP13} where the states are isomorphy classes of marked metric measure spaces and the marks encode allelic types. In the present article, the marks encode lengths of external branches. We consider only the neutral case, and we describe genealogies without using types.

In Section~\ref{Sampl:sec:semigroup}, we show continuity properties of the semigroups of tree-valued Fleming-Viot processes and that the domains of the martingale problems for them are cores.
In Section~\ref{Sampl:sec:equil}, we show that tree-valued Fleming-Viot processes converge in distribution to equilibrium.

While we construct versions of tree-valued Fleming-Viot processes in the present article using the representation result, the full sample paths are constructed by techniques specific to the lookdown model in the companion article \cite{Pathw}.

\subsection{Additional related literature}
Aldous \cite{Ald93} represents consistent families of finite trees that satisfy a ``leaf-tight'' property by random measures on $\ell_1$ (and random subsets of $\ell_1$).
Kingman's coalescent is given as an example in \cite{Ald93}. The ``leaf-tight'' property corresponds to the absence of dust.
A representation for exchangeable hierarchies in terms of sampling from random weighted real trees is given by Forman, Haulk, and Pitman \cite{FHP15}.
There are many other representation results for exchangeable structures in the literature.
For instance, by the Dovbysh-Sudakov theorem, see Austin \cite{Austin15} for a proof based on a representation for exchangeable random measures,
jointly exchangeable arrays that are non-negative definite can be represented in terms of sampling from the space $L_2[0,1]\times\R_+$.

The genealogy in the lookdown model is further studied in Pfaffelhuber and Wakolbinger \cite{PW06}.
Kliem and Löhr \cite{KL14} further study marked metric measure spaces. In their article, tree-valued $\Lambda$-Fleming-Viot processes in the dust-free case is also mentioned. Kliem and Winter \cite{KW17} use marked metric measure spaces to describe trait-dependent branching processes. 
In the context of measure-valued spatial $\Lambda$-Fleming-Viot processes with dust, Véber and Wakolbinger \cite{VW15} work with a skeleton structure.
Functionals of coalescents like external branch lengths have also been studied, see for example \cite{M10}. Also the time evolution of such functionals has been studied for evolving coalescents, see for example \cites{KSW14,DK14}.

Bertoin and Le Gall \cites{BLG03,BLG05,BLG06} represent $\Xi$-coalescents in terms of sampling from flows of bridges from which they also construct measure-valued Fleming-Viot processes. They also consider mass coalescents. Mass coalescents (see e.\,g.\ Chapter 4.3 in Bertoin \cite{Bertoin}) also describe genealogies without labeling individuals.
In Section \ref{Sampl:sec:bridges}, we construct the Fleming-Viot process with values in the space of distance matrix distributions from the dual flow of bridges. We also mention the work of Labbé \cite{Lab12} where relations between the lookdown model and flows of bridges are studied.

\section{Distance matrices and their decompositions}
\label{Sampl:sec:dist-dec}
We write $\N=\{1,2,3,\ldots\}$. Let $\Uu$\label{Sampl:not:Uu} denote the space of semi-ultrametrics on $\N$ and let $\Dd$\label{Sampl:not:Dd} denote the space of semimetrics on $\N$. We view $\Uu$ and $\Dd$ as subspaces of $\R^{\N^2}$ in that we do not distinguish between a semi-metric $\rho$ and the distance matrix $(\rho(i,j))_{i,j\in\N}$. We endow $\R^{\N^2}$ with a complete and separable metric that induces the product topology when $\R$ is equipped with the Euclidean topology. 
Using the map\label{Sampl:not:alpha}
\[\alpha:\R_+^{\N^2}\times\R_+^{\N}\to\R_+^{\N^2},\quad
(r,v)\mapsto((v(i)+r(i,j)+v(j))\I{i\neq j})_{i,j\in\N},\]
we define the space
\[\hat\Uu=\{(r,v)\in\Dd\times\R_+^{\N}:\alpha(r,v)\in\Uu\}\label{Sampl:not:hatUu}\]
whose elements we call decomposed semi-ultrametrics or marked distance matrices. As above, we view $\Dd\times\R_+^\N$ and $\hat\Uu$ as subspaces of $\R^{\N^2}\times\R^\N$ which we endow with a complete and separable metric that induces the product topology.

We define the function\label{Sampl:not:Upsilon}
\[\Upsilon:\Uu\to\R_+^\N,\quad\rho\mapsto(\tfrac{1}{2}\inf_{j\in\N\setminus\{i\}}\rho(i,j))_{i\in\N},\]
and we denote by $\beta$\label{Sampl:not:beta} the function that maps a semi-ultrametric $\rho\in\Uu$ to the decomposed semi-ultrametric $(r,v)\in\hat\Uu$ that is given by $v=\Upsilon(\rho)$ and
\[r(i,j)=(\rho(i,j)-v(i)-v(j))\I{i\neq j}\]
for $i,j\in\N$.
The interpretation of these functions is given in Remark~\ref{Sampl:rem:um-tree} below from which it follows that $r$ is a tree-like
semi-metric (i.\,e., $r$ is $0$-hyperbolic, see e.\,g.\ \cite{E08}). Alternatively, it can be easily checked that $r$ satisfies the triangle inequality.

The function $\alpha$ retrieves the semi-ultrametric from a decomposed semi-ultrametric. For instance, $\alpha\circ\beta$ is the identity map on $\Uu$.
\begin{rem}
Let us agree on the following notation. When we identify the elements of a semi-metric space $(X,\rho)$ that have $\rho$-distance zero to obtain a metric space $(X',\rho)$, we refer by each element $x\in X$ also to the associated element of $X'$. Furthermore, we define the metric completion of the semi-metric space $(X,\rho)$ as the metric completion of $(X',\rho)$.
\end{rem}
\begin{rem}
\label{Sampl:rem:um-tree}
Let $\rho\in\Uu$, $(r,v)=\beta(\rho)$, and let $(T,d)$ be the real tree associated with $\rho$ as in Remark~\ref{Sampl:rem:ultrametr} with $X=\N$.
Then $v(i)=\Upsilon(\rho)(i)$ can be interpreted as the length, and $(i,v(i))$ as the starting vertex of the external branch that ends in the leaf $(i,0)$ of $T$. Here we define that this external branch consists only of the leaf if there exists $k\in\N\setminus\{i\}$ with $\rho(i,k)=0$.
Furthermore, the map $\varphi(i)=(i,v(i))$ from $(\N,r)$ to $(T,d)$ is distance-preserving.

In this sense, the map $\beta:\rho\mapsto(r,v)$ decomposes the coalescent tree that is given by $\rho$ into the external branches with lengths $v$ and the subtree spanned by their starting vertices whose mutual distances are given by $r$. More generally, any element of $\hat\Uu$ can be seen as a decomposed coalescent tree.
\end{rem}

We call a semi-ultrametric $\rho\in\Uu$ dust-free if $\Upsilon(\rho)=0$, that is, if all external branches in the associated tree have length zero so that there are no isolated leaves.

\section{Sampling from marked metric measure spaces}
\label{Sampl:sec:dec-mmm}

\subsection{Preliminaries}
Recall the definitions of metric measure spaces, marked metric measure spaces, and their (marked) distance matrix distributions from Sections \ref{Sampl:sec:backgr} and \ref{Sampl:sec:intr-sampl}. Also recall that two metric measure spaces are said to be isomorphic if they have the same distance matrix distributions. We denote the set of isomorphy classes of metric measure spaces by $\M$\label{Sampl:not:M} and we endow it with the Gromov-weak topology in which metric measure spaces converge if and only if their distance matrix distributions converge. Greven, Pfaffelhuber, and Winter \cite{GPW09} showed that $\M$ is then a Polish space.

Analogously, two marked metric measure spaces are said to be isomorphic if they have the same marked distance matrix distributions.  We denote the set of isomorphy classes of marked metric measure spaces by \label{Sampl:not:hatM} $\hat\M$ and we endow it with the marked Gromov-weak topology in which marked metric measure spaces converge if and only if their marked distance matrix distributions converge weakly. This makes $\hat\M$ a Polish space, as shown by Depperschmidt, Greven, and Pfaffelhuber \cite{DGP11}.

We denote the distance matrix distribution\label{Sampl:not:nuchi} of the isomorphy class of a metric measure space $\chi\in\M$ by $\nu^\chi$. We denote the marked distance matrix distribution of $\chi'\in\hat\M$ by $\nu^{\chi'}$, so that $\alpha(\nu^{\chi'})$ is the distance matrix distribution of $\chi'$, in accordance with the definition in Section \ref{Sampl:sec:intr-sampl}. (We denote by $\varphi(\xi)=\xi\circ\varphi^{-1}$ the pushforward measure of a measure $\xi$ on a measurable space $E$ under a measurable function $\varphi$ on $E$.)\label{not:pushforward} 

\begin{rem}
\label{rem:mmm-dust-free}
We call a marked metric measure space $(X,r,m)$ dust-free if the probability measure $m$ is of the form $m=\mu\otimes\delta_0$ for a probability measure $\mu$ on the Borel sigma algebra on $X$.
Then the distance matrix distribution $\alpha(\nu^{(X,r,\mu\otimes\delta_0)})$ equals the distance matrix distribution $\nu^{(X,r,\mu)}$ of the metric measure space $(X,r,\mu)$. We call $(X,r,\mu)$ the metric measure space associated with the dust-free marked metric measure space $(X,r,\mu\otimes\delta_0)$.
\end{rem}

Let $S_\infty$ denote the group of finite permutations on $\N$. We define the action of $S_\infty$ on $\Dd$ and $\Dd\times\R_+^\N$, respectively, by
$p(\rho)=(\rho(p(i),p(j)))_{i,j\in\N}$ and
\[p(r,v)=((r(p(i),p(j)))_{i,j\in\N},(v(p(i))_{i\in\N})\]
for $p\in S_\infty$, $\rho\in\Dd$, $(r,v)\in\Dd\times\R_+^\N$.
A random variable, for instance with values in $\Dd$ or $\Dd\times\R_+^\N$, is called exchangeable if its distribution is invariant under the action of the group $S_\infty$.
\begin{rem}
Exchangeable random variables with values in $\Dd$ or $\Dd\times\R_+^\N$ can be seen as jointly exchangeable arrays, see e.\,g.\ \cite{Kal05}*{Section 7}. Also recall that the definition of exchangeability does not change when $S_\infty$ is replaced with the group of all bijections from $\N$ to itself, as the finite restrictions determine the distribution of a random variable in $\Dd$ or $\Dd\times\R_+^\N$.
\end{rem}
\begin{rem}
The coalescents associated by \reff{Sampl:eq:coal-ultrametr} with the exchangeable semi-ultrametrics on $\N$ form a larger class of processes than the so-called exchangeable coalescents defined in e.\,g.\ Section 4.2.2 of Bertoin \cite{Bertoin}. For example, the coalescent process associated with an exchangeable semi-ultrametric on $\N$ needs not be Markovian.
\end{rem}

\subsection{Tree-like marked metric measure spaces}
\label{Sampl:sec:mmm-trees}
We consider the space\label{Sampl:not:U}
\[\U=\{\chi\in\M:\nu^\chi(\Uu)=1\}\]
of ultrametric measure spaces which is a closed subspace of $\M$, as shown in \cite{GPW13}*{Lemma 2.3}.
By the same argument, the space\label{Sampl:not:hatU}
\[\hat\U=\{\chi\in\hat\M:\alpha(\nu^\chi)(\Uu)=1\}.\]
is a closed subspace of $\hat\M$. It contains the marked metric measure spaces with ultrametric distance matrix distribution.
Following e.\,g.\ \cites{GPW09,GPW13} and Remark \ref{Sampl:rem:ultrametr}, we call the elements of $\U$ trees. Also the elements of $\hat\U$ may be called trees (as in Remark \ref{Sampl:rem:mmm-tree-erg} below).

Proposition~\ref{Sampl:prop:sampl-unique} below states that a.\,e.\ realization of a $\hat\Uu$-valued random variable with the marked distance matrix distribution of a marked metric measure space in $\hat\U$ is the decomposition of a semi-ultrametric by the map $\beta$ from Section~\ref{Sampl:sec:dist-dec}. As a consequence, the isomorphy class of a marked metric measure space in $\hat\U$ is determined already by its distance matrix distribution.
\begin{prop}
\label{Sampl:prop:sampl-unique}
Let $(X,r',m)$ be a marked metric measure space with $\alpha(\nu^{(X,r',m)})(\Uu)=1$. Let $(r,v)$ be a $\hat\Uu$-valued random variable with distribution $\nu^{(X,r',m)}$. Then
\[(r,v)=\beta\circ\alpha(r,v)\quad\text{a.\,s.}\]
\end{prop}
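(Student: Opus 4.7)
My plan is to reduce the claim to a single statement about the mark coordinate, and then verify that statement by a three-sample argument that exploits the ultrametric hypothesis. Since $\alpha\circ\beta$ is the identity on $\Uu$ and $\tilde\rho:=\alpha(r,v)$ lies in $\Uu$ almost surely, it suffices to show $\Upsilon(\tilde\rho)(i) = v(i)$ a.s.\ for every $i\in\N$; the $r$-component of $\beta(\tilde\rho)$ will then match automatically because $r(i,j)=\tilde\rho(i,j)-v(i)-v(j)$ for $i\ne j$. Expanding $\tilde\rho(i,j)=r(i,j)+v(i)+v(j)$ and pulling $v(i)$ out of the infimum defining $\Upsilon$ gives
\[\Upsilon(\tilde\rho)(i) = \tfrac{1}{2}v(i)+\tfrac{1}{2}W(i),\qquad W(i):=\inf_{j\ne i}\bigl(r(i,j)+v(j)\bigr),\]
so the task becomes proving $W(i)=v(i)$ a.s.

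Without loss of generality I realize $(r,v)$ via iid samples $(x(j),V(j))_{j\in\N}$ drawn from $m$, so $W(i)=\inf_{j\ne i}(r'(x(i),x(j))+V(j))$. Conditioning on $(x(i),V(i))=(x_0,V_0)$, the standard fact that the infimum of a non-negative iid sequence equals its essential infimum a.s.\ yields $W(i)=\operatorname{ess\,inf}_{(y,u)\sim m}(r'(x_0,y)+u)=:a(x_0)$ almost surely. The upper bound $a(x_0)\le V_0$ for $m$-a.e.\ $(x_0,V_0)$ is the easy half: such points lie in $\supp m$, so every product ball $B_\ep(x_0)\times B_\ep(V_0)$ has positive $m$-mass, and an iid sample landing there contributes a value at most $V_0+2\ep$.

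The lower bound $a(x_0)\ge V_0$ is where the ultrametric hypothesis enters, and this is the main obstacle. I would argue by contradiction using \emph{three} independent samples rather than two. Supposing there exist $(x_0,V_0),(y,u)\in\supp m$ with $\delta:=V_0-r'(x_0,y)-u>0$, I fix $\ep=\delta/10$ and consider the positive-probability event on which $(x(1),V(1))$ and $(x(3),V(3))$ lie in $B_\ep(x_0)\times B_\ep(V_0)$ while $(x(2),V(2))$ lies in $B_\ep(y)\times B_\ep(u)$. Since $V(1),V(3)\ge V_0-\ep$, this forces $\tilde\rho(1,3)\ge 2V_0-2\ep$, while the triangle inequality for $r'$ bounds $\tilde\rho(1,2),\tilde\rho(2,3)\le r'(x_0,y)+V_0+u+4\ep$. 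The ultrametric inequality $\tilde\rho(1,3)\le\max(\tilde\rho(1,2),\tilde\rho(2,3))$ would then yield $\delta\le 6\ep = 3\delta/5$, a contradiction, so this event witnesses a positive-probability failure of the ultrametric property, clashing with $\alpha(\nu^{(X,r',m)})(\Uu)=1$.

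The essential trick is to use a triple with two samples concentrated near a single point $(x_0,V_0)$: the ``internal leg'' $\tilde\rho(1,3)$ is then forced to be at least about $2V_0$, while the other two legs can only reach about $r'(x_0,y)+V_0+u<2V_0$, making the strict gap $\delta$ visible to the ultrametric triangle. Combining the upper and lower bounds yields $W(i)=v(i)$ a.s., giving the claim.
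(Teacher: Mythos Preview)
Your argument is correct and follows essentially the same route as the paper's proof: both reduce to showing $\Upsilon(\tilde\rho)(i)=v(i)$ a.s., obtain the upper bound from a nearby sample, and obtain the lower bound via the strong triangle inequality applied to a configuration of three sampled points, two of which are close together. The only difference is organizational: you pass through the essential infimum $a(x_0)$ and the support of $m$ and argue the lower bound by contradiction, whereas the paper fixes a random $j$ close to $i$ and writes a direct chain of inequalities $2v(i)-2\ep\le\rho(i,j)\le\rho(i,k)\vee\rho(k,j)\le\rho(i,k)+4\ep$ valid for every $k\ne i,j$. The underlying three-point ultrametric computation is the same in both.
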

The proof is deferred to Section \ref{sec:proof:sampl-unique}.
\begin{rem}
\label{rem:sampling-dust-free}
We call a semi-ultrametric $\rho\in\Uu$ dust-free if $\Upsilon(\rho)=0$. It can be seen as a consequence of Proposition \ref{Sampl:prop:sampl-unique} that (the isomorphy class of) a marked metric measure space $(X,r,m)$ in $\hat\U$ is dust-free (as defined in Remark \ref{rem:mmm-dust-free}) if and only if a random variable with distribution $\alpha(\nu^{(X,r,m)})$ is a.\,s.\ dust-free.
In particular, a random variable with the distance matrix distribution of a metric measure space is a.\,s.\ dust-free.
\end{rem}

\subsection{Marked metric measure spaces from marked distance matrices}
\label{Sampl:sec:mdm-mmm-new}
In this subsection, we define functions by which we construct a (marked) metric measure space from a (marked) distance matrix. An interpretation of these functions is given in Remark~\ref{Sampl:rem:psi-families} below. In Remark \ref{rem:erg-dec-map}, we state their role in the context of the ergodic decomposition.

First we define the function $\psi:\Dd\to\M$\label{Sampl:not:psi} that maps $\rho\in\Dd$ to the isomorphy class of the metric measure space $(X,\rho,\mu)$, given as follows: $(X,\rho)$ is the metric completion of $(\N,\rho)$. The probability measure $\mu$ is defined as the weak limit of the probability measures $n^{-1}\sum_{i=1}^n\delta_i$ as $n$ tends to infinity, if this weak limit exists. If the limit does not exist, we define $m$ arbitrarily, let us set $\mu=\delta_1$. Furthermore, we denote by $\Dd^*$\label{Sampl:not:Dds} the subset of distance matrices $\rho\in\Dd$ such that the weak limit in the definition above exists.

Analogously, we define the function $\hat\psi:\Dd\times\R_+^\N\to\hat\M$\label{Sampl:not:hatpsi} that maps $(r,v)$ to the isomorphy class of the marked metric measure space $(X,r,m)$, where $(X,r)$ is the metric completion of the semi-metric space $(\N,r)$ and $m$ is the weak limit of the probability measures $n^{-1}\sum_{i=1}^n\delta_{(i,v(i))}$ on $X\times\R_+$ if this weak limit exists, else we set $m=\delta_{(1,0)}$. We denote by $\hat\Dd^*$\label{Sampl:not:hatDds} the subset of marked distance matrices $(r,v)\in\Dd\times\R_+^\N$ such that the weak limit in the definition above exists.

We call $\mu$ and $m$ in the definitions of $\psi$ and $\hat\psi$ also sampling measures.

\begin{rem}
\label{Sampl:rem:psi-hatpsi}
Let $(r,v)\in\Dd\times\R_+^\N$. Then $(r,v)\in\hat\Dd^*$ implies $r\in\Dd^*$. For a representative $(X,r,m)$ of $\hat\psi(r,v)$, the isomorphy class of $(X,r,m(\cdot\times\R_+))$ equals $\psi(r)$.
\end{rem}

\begin{prop}
\label{Sampl:prop:meas}
The functions $\psi$ and $\hat\psi$ are measurable.
\end{prop}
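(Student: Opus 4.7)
The plan is to realise $\psi$ and $\hat\psi$ as pointwise limits of continuous maps on explicit Borel subsets, with constant values on the complements. For each $k\in\N$, define $\chi_k:\Dd\to\M$ as the isomorphy class of $(\N,\rho,k^{-1}\sum_{i=1}^k\delta_i)$ (equivalently, of its metric completion). For any bounded continuous $\phi:\R^{n\times n}\to\R$, the integral $\int\phi\,d\nu^{\chi_k(\rho)}$ equals $k^{-n}\sum_{i_1,\ldots,i_n=1}^k\phi((\rho(i_p,i_q))_{p,q\leq n})$, which depends continuously on $\rho\in\Dd$ through only finitely many of its coordinates. Since such integrals form a convergence-determining family for the Gromov-weak topology on $\M$ (see \cite{GPW09}), $\chi_k$ is continuous. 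The analogous map $\hat\chi_k:(r,v)\mapsto[(\N,r,k^{-1}\sum_{i=1}^k\delta_{(i,v(i))})]$ into $\hat\M$ is continuous by the same reasoning, using the marked polynomials of \cite{DGP11}.

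The key technical step is that $\Dd^*$ and $\hat\Dd^*$ are Borel. Viewing $\mu_k:=k^{-1}\sum_{i=1}^k\delta_i$ as a Borel probability measure on the Polish space $X$ (the completion of $(\N,\rho)$), the space of such measures is Polish under the Prokhorov metric $\dP$, so $(\mu_k)$ converges weakly iff it is Cauchy in $\dP$. Since $\mu_k,\mu_l$ are supported on $\{1,\ldots,k\}$ and $\{1,\ldots,l\}$, the condition $\dP(\mu_k,\mu_l)<\ep$ reduces to the finite family of inequalities $\mu_k(S)\leq\mu_l(S^\ep)+\ep$ for $S\subseteq\{1,\ldots,k\}$ together with the symmetric ones, where $\mu_l(S^\ep)=l^{-1}|\{j\leq l:\inf_{i\in S}\rho(i,j)<\ep\}|$. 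Each such inequality is a Borel condition on $\rho$, so $\Dd^*$ is Borel; the identical argument on the Polish space $X\times\R_+$ yields that $\hat\Dd^*$ is Borel.

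For $\rho\in\Dd^*$ with weak limit $\mu$, $\mu_k^{\otimes n}\to\mu^{\otimes n}$ weakly on $X^n$, and since $\rho$ extends continuously to $X\times X$, the integrals $\int\phi\,d\nu^{\chi_k(\rho)}$ converge to $\int\phi\,d\nu^{\psi(\rho)}$ for every bounded continuous cylindrical $\phi$, giving $\chi_k(\rho)\to\psi(\rho)$ in $\M$. Hence $\psi|_{\Dd^*}$ is a pointwise limit of continuous maps and therefore Borel measurable. On $\Dd\setminus\Dd^*$, $\psi(\rho)=[X,\rho,\delta_1]$ has distance matrix distribution equal to the Dirac mass at the zero matrix, so $\psi$ takes the constant value of the one-point isomorphy class. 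Gluing along the Borel set $\Dd^*$ yields measurability of $\psi$, and the argument for $\hat\psi$ is strictly parallel. The main obstacle is the Borel measurability of $\Dd^*$ and $\hat\Dd^*$: although the Prokhorov criterion a priori quantifies over all Borel subsets of the $\rho$-dependent space $X$, the discrete supports of $\mu_k$ allow the criterion to be rewritten in terms of countably many coordinates of $\rho$.
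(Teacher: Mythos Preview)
Your proof is correct and follows essentially the same strategy as the paper's own proof: approximate $\psi$ (resp.\ $\hat\psi$) by the continuous maps $\chi_k$ (the paper's $\hat\psi_n\circ\gamma_n$), show $\Dd^*$ and $\hat\Dd^*$ are Borel by reducing the Prohorov Cauchy criterion to finitely many inequalities on the finite supports, and glue the pointwise limit on $\Dd^*$ with the constant value on the complement. The only cosmetic difference is that the paper verifies continuity of the approximants and their convergence to $\psi$ via explicit bounds in the (marked) Gromov--Prohorov metric, whereas you do it through convergence of polynomial integrals; both routes are equivalent given the definition of the Gromov-weak topology.
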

The proof, in which we write $\psi$ and $\hat\psi$ as limits of continuous functions, is deferred to Section~\ref{Sampl:sec:proof:meas}.

\begin{rem}[An interpretation of $\psi$ and $\hat\psi$]
\label{Sampl:rem:psi-families}
For $\rho\in\Dd^*\cap\Uu$, the probability measure in the ultrametric metric measure space $\psi(\rho)$ charges each ball with the asymptotic frequency of the corresponding block of the coalescent which is associated with $\rho$ by \reff{Sampl:eq:coal-ultrametr}.

Similarly, for $(r,v)\in\hat\Dd^*\cap\hat\Uu$, let $(X,r,m)$ be the representative of  $\hat\psi(r,v)$ from the definition of $\hat\psi$. We consider the completion $(\bar T,d)$ of the real tree $(T,d)$ associated with $(r,v)$ as in Remark \ref{Sampl:rem:um-tree}, and the extension $\varphi:X\to\bar T$ of the isometry $\varphi$ from Remark \ref{Sampl:rem:um-tree}. Then the image measure $\mu:=\varphi(m(\cdot\times\R_+))$ charges each region of $\bar T$ with the asymptotic frequency of the integers that label the leaves of $T$
that are the endpoints of external branches that begin in that region.
\end{rem}

\subsection{The sampling representation}
\label{Sampl:sec:repr}
The basic result in this paper is stated in Theorem \ref{thm:repr} below. Here we consider an exchangeable random semi-ultrametric $\rho$ on $\N$, and we assert existence of a random variable $\chi$ with values in the space of isomorphy classes of marked metric measure spaces that has the following property: Let $\rho'$ be a random variable whose conditional distribution given $\chi$ is the distance matrix distribution of $\chi$. Then the random variables $\rho$ and $\rho'$ have the same (unconditional) distribution. (In the language of the theory of random measures, this means that the distribution of $\rho$ is equal to the first moment measure $\E[\alpha(\nu^\chi)]$. That is, $\E[\int\alpha(\nu^\chi)(d\rho')\phi(\rho')]=\E[\phi(\rho)]$ for each bounded measurable $\phi$.)
\begin{thm}
\label{thm:repr}
Let $\rho$ be an exchangeable $\Uu$-valued random variable. Let $\chi=\hat\psi\circ\beta(\rho)$. Let $\rho'$ be a $\Uu$-valued random variable whose conditional distribution given $\chi$ is $\alpha(\nu^\chi)$. Then:
\begin{enumerate}[label=(\roman{*}),ref=(\roman{*})]
\item\label{item:thm:repr:sampling-psi} $\beta(\rho)\in\hat\Dd^*$ a.\,s.
\item\label{item:thm:repr:sampl} $\rho$ and $\rho'$ are equal in distribution.
\item\label{item:thm:repr:det} $\chi=\hat\psi\circ\beta(\rho')$ a.\,s.
\end{enumerate}
\end{thm}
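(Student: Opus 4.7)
My plan is to reduce the three claims to the case where the law of $\rho$ is ergodic under the natural action of $S_\infty$ on $\Uu$ (via the ergodic decomposition of exchangeable probability measures) and then to establish them through a law-of-large-numbers analysis of the empirical measures $m_n := n^{-1}\sum_{i=1}^n \delta_{(i,v(i))}$ associated with $(r,v) := \beta(\rho)$. Since $\rho \mapsto \beta(\rho)$ is $S_\infty$-equivariant and $\hat\psi$ is $S_\infty$-invariant up to isomorphy, the random element $\chi = \hat\psi \circ \beta(\rho)$ is measurable with respect to the $S_\infty$-invariant $\sigma$-algebra. Hence under ergodicity $\chi$ is almost surely equal to a deterministic isomorphy class $c \in \hat\U$, and the general case of the theorem follows by integrating over the ergodic decomposition --- the same reasoning that is invoked more abstractly in Section~\ref{sec:ergodic}.

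For item (i) in the ergodic case, I would fix a countable, point-separating family of bounded continuous test functions on $X \times \R_+$ (where $(X,r)$ is the completion of $(\N,r)$) of the form
\[(i,s) \mapsto f\bigl(r(i,1), \ldots, r(i,k), s\bigr),\]
with $k \in \N$ and $f$ ranging over a countable dense subset of $\Cb(\R_+^k \times \R_+)$. For each such test function, the sequence $\bigl(f(r(i,1), \ldots, r(i,k), v(i))\bigr)_{i \geq 1}$ is an exchangeable real-valued sequence, so the de Finetti / Birkhoff ergodic theorem delivers almost sure convergence of its Ces\`aro averages to a deterministic limit. A tightness estimate for $(m_n)$ (for instance based on $v(i) \leq \tfrac12 \rho(i,1)$ together with exchangeability of $\rho$) then upgrades the scalar convergences into weak convergence of $m_n$ to some probability measure $m$ on $X \times \R_+$, a.s.\ This establishes (i) and identifies $\chi$ with the isomorphy class of $(X, r, m)$.

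For items (ii) and (iii), let $(X, r', m)$ be a representative of the deterministic class $c$, and let $(x(i), v(i))_{i \in \N}$ be an iid sample from $m$; then $\alpha(\nu^c)$ is, by definition, the distribution of $\alpha\bigl((r'(x(i), x(j)))_{i,j}, (v(i))_i\bigr)$. Combining the convergence $m_n \to m$ from (i), exchangeability of $\rho$, and Gromov reconstruction (which is built into the definition of isomorphy of marked metric measure spaces), I can match the finite-dimensional marginals of $\rho$ with those of a sample from $\alpha(\nu^c)$, which gives (ii). For (iii), by (ii) the variable $\rho'$ is itself exchangeable; Proposition~\ref{Sampl:prop:sampl-unique} then identifies $\beta(\rho')$ almost surely with the pair $\bigl((r'(x(i), x(j)))_{i,j}, (v(i))_i\bigr)$ coming from an iid $m$-sample, and the classical Polish-space strong law of large numbers gives $n^{-1}\sum_{i=1}^n \delta_{(i,v(i))} \to m$ a.s., whence $\hat\psi \circ \beta(\rho') = c = \chi$ a.s.

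The principal obstacle I anticipate is step (i): because $X$ is the completion of $\N$ under the random metric $r$, the natural class of test functions depends on $\omega$, so some care is needed to select a single full-probability event on which Ces\`aro convergence holds for the whole countable family simultaneously, and to patch these pointwise convergences into weak convergence of measures on an $\omega$-dependent Polish space $X \times \R_+$. Once this coordination of de Finetti type convergences with the random metric structure is carried out, the remaining steps reduce to routine manipulations based on Proposition~\ref{Sampl:prop:sampl-unique} and the definition of $\hat\psi$.
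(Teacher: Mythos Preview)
Your overall architecture is sound: the reduction to the ergodic case is legitimate (and a reasonable simplification the paper does not make), and once (i) is in hand your treatment of (ii) and (iii) essentially reproduces the content of Propositions~\ref{Sampl:prop:dm-mmm} and~\ref{Sampl:prop:mmm-mdm-mmm} together with Proposition~\ref{Sampl:prop:sampl-unique}, which is how the paper derives them as well.

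The gap is in step (i), and it is precisely the ``principal obstacle'' you flag, but not for the reason you emphasize. Your tightness remark $v(i)\le\tfrac12\rho(i,1)$ controls only the $\R_+$-component of $m_n=n^{-1}\sum_{i=1}^n\delta_{(i,v(i))}$. What is missing is tightness in the $X$-component: nothing in your test-function scheme prevents mass from escaping in the noncompact space $(X,r)$. Convergence of $n^{-1}\sum_i f(r(i,1),\ldots,r(i,k),v(i))$ for a countable family of $f$'s produces candidate integrals, but without knowing that small $r$-balls around the integers carry positive asymptotic frequency you cannot conclude that these numbers come from a probability measure on $X\times\R_+$. This is not a formality: it uses the \emph{ultrametric} structure of $\rho$ in an essential way, not just exchangeability. (A minor point: your sequence is only exchangeable over $i>k$, i.e.\ under permutations fixing $[k]$; this does not affect Ces\`aro limits but should be said.)

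The paper isolates exactly this missing piece as Claim~\reff{Sampl:eq:Claim}. One introduces an equivalence relation $\sim^\ep$ on $\N$ (roughly: $i\sim^\ep j$ iff $v(i)$, $v(j)$, $\tfrac12\rho(i,j)$ all lie in a common interval of length $<\ep$), checks transitivity from the strong triangle inequality, and then shows via de~Finetti applied to an indicator sequence that the resulting exchangeable partition has \emph{no singleton blocks} a.s. Kingman's correspondence then gives proper frequencies summing to $1$, which is precisely the tightness you need: for every $\ep>0$ and every $i$, the set $\{j:r(i,j)<2\ep,\ |v(i)-v(j)|<\ep\}$ has positive asymptotic density. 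With this claim in hand your test-function route can be completed (or, as the paper does in its first proof, one builds the limit measure directly from the block frequencies); without it, step (i) does not close.
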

Assertion \ref{item:thm:repr:sampling-psi} above states that for a typical realization of $\rho$ and its decomposition $\beta(\rho)$, the sampling measure $m$ in the definition of $\hat\psi(\beta(\rho))$ in Subsection \ref{Sampl:sec:mdm-mmm-new} is the weak limit of the uniform probability measures therein.
Assertion \ref{item:thm:repr:det} states that the realization of $\chi$ can typically be reconstructed from the realization of $\rho'$.
We interpret the reconstruction map $\hat\psi\circ\beta$ in terms of the ergodic decomposition in Remark \ref{rem:erg-dec-map}.
We prove Theorem \ref{thm:repr} in Section~\ref{Sampl:sec:proof:sampling}. We give two proofs of Theorem \ref{thm:repr}\ref{item:thm:repr:sampling-psi}.
In one of them, the de Finetti theorem yields the aforementioned sampling measure $m$ as the directing measure of an exchangeable sequence.
\begin{rem}
\label{rem:cond-repr}
In the context of Theorem \ref{thm:repr}, $(\rho,\alpha(\nu^\chi))$ and $(\rho',\alpha(\nu^\chi))$ are equal in distribution. Hence, $\alpha(\nu^\chi)$ is a regular conditional distribution of $\rho$ given $\alpha(\nu^\chi)$.
\end{rem}

We also note the following uniqueness property which is proved in Section \ref{sec:proofs:resampling}.
\begin{prop}
\label{Sampl:prop:uniqueness}
Let $\chi$ and $\chi'$ be $\hat\U$-valued random variables. Let $\rho$ be a $\Uu$-valued random variable with conditional distribution $\alpha(\nu^\chi)$ given $\chi$, and let $\rho'$ be another $\Uu$-valued random variable with conditional distribution $\alpha(\nu^{\chi'})$ given $\chi'$. Then $\rho$ and $\rho'$ are equal in distribution if and only if $\chi$ and $\chi'$ are equal in distribution.
\end{prop}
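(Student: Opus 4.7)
The ``if'' direction is immediate: by Proposition \ref{Sampl:prop:meas} and the measurability of $\alpha$, the map $\chi\mapsto\alpha(\nu^\chi)$ is measurable, so for any measurable $A\subseteq\Uu$ the function $\chi\mapsto\alpha(\nu^\chi)(A)$ is measurable. Hence equality in distribution of $\chi$ and $\chi'$ gives $\E[\alpha(\nu^\chi)(A)]=\E[\alpha(\nu^{\chi'})(A)]$ for every $A$, and these are the distributions of $\rho$ and $\rho'$ evaluated at $A$.

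For the converse, my plan is to exhibit $\chi$ as an a.\,s.\ measurable function of $\rho$. Concretely, I would establish
\[
\chi=\hat\psi\circ\beta(\rho)\quad\text{a.\,s.,}
\]
and, by the same token, $\chi'=\hat\psi\circ\beta(\rho')$ a.\,s. Once this identity is in hand, measurability of $\hat\psi\circ\beta$ transfers equality in distribution of $\rho$ and $\rho'$ to that of $\chi$ and $\chi'$, finishing the proof.

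The displayed identity is the heart of the matter; I would prove it by conditioning on $\chi$ and applying Theorem \ref{thm:repr}. Fix $\bar\chi\in\hat\U$ in the support of the law of $\chi$; given $\chi=\bar\chi$, $\rho$ is exchangeable with conditional distribution $\alpha(\nu^{\bar\chi})$. Apply Theorem \ref{thm:repr} to $\rho$ under this conditional law with $\chi_0:=\hat\psi\circ\beta(\rho)$; sampling $\rho^*$ from $\chi_0$, part (ii) yields that $\rho^*$ and $\rho$ are equal in distribution, that is,
\[
\E\bigl[\alpha(\nu^{\chi_0})\,\bigm|\,\chi=\bar\chi\bigr]=\alpha(\nu^{\bar\chi}).
\]
Because $\alpha(\nu^{\bar\chi})$ arises from iid sampling from a marked metric measure space, it is an \emph{ergodic} exchangeable measure on $\Uu$; an ergodic measure admits only the trivial representation as a mixture of exchangeable measures, so the last equality forces $\alpha(\nu^{\chi_0})=\alpha(\nu^{\bar\chi})$ a.\,s.\ under the conditional law. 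The consequence of Proposition \ref{Sampl:prop:sampl-unique} that elements of $\hat\U$ are determined by their distance matrix distributions upgrades this to $\chi_0=\bar\chi$ a.\,s.; integrating out $\bar\chi$ yields $\hat\psi\circ\beta(\rho)=\chi$ a.\,s.

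The main obstacle I anticipate is the ergodicity step: one must know that $\alpha(\nu^{\bar\chi})$ is an ergodic exchangeable measure for every $\bar\chi\in\hat\U$. This is the familiar consequence of its being built from iid draws but is not formalized in the present excerpt; it should be available from the ergodic decomposition material announced in Section \ref{sec:ergodic}. Granting it, Theorem \ref{thm:repr}(ii) identifies the directing measure and the injectivity on $\hat\U$ afforded by Proposition \ref{Sampl:prop:sampl-unique} secures its uniqueness, closing the argument.
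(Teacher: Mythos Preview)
Your argument is correct, and the target identity $\chi=\hat\psi\circ\beta(\rho)$ a.\,s.\ is exactly what the paper establishes. The paper, however, reaches it by a shorter route: it lets $(r,v)$ have conditional distribution $\nu^\chi$ given $\chi$, sets $\rho=\alpha(r,v)$, and then chains Proposition~\ref{Sampl:prop:sampl-unique} (giving $(r,v)=\beta(\rho)$ a.\,s.) with Proposition~\ref{Sampl:prop:mmm-mdm-mmm} (a Glivenko--Cantelli type reconstruction giving $\hat\psi(r,v)=\chi$ a.\,s.\ directly). Your route instead passes through Theorem~\ref{thm:repr}(ii) to write $\alpha(\nu^{\bar\chi})=\E[\alpha(\nu^{\chi_0})]$ and then collapses the mixture using extremality of ergodic measures; the ergodicity you flag as an obstacle is precisely Proposition~\ref{prop:mmm-ergodic}, so it is available. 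What you gain is a conceptually transparent explanation of uniqueness as extremality in the ergodic decomposition; what the paper's approach gains is economy, since Proposition~\ref{Sampl:prop:mmm-mdm-mmm} already encodes the reconstruction without invoking Theorem~\ref{thm:repr} or any extremality argument.
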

(In terms of first-moment measures, Proposition \ref{Sampl:prop:uniqueness} says that $\chi$ and $\chi'$ are equal in distribution if and only if $\E[\alpha(\nu^\chi)]=\E[\alpha(\nu^{\chi'})]$.)

The aim of the present paper is the treatment of the case with dust. In the dust-free case, we need not decompose the semi-metric $\rho$ by the map $\beta$. Instead, we can work directly with the map $\psi$ from Subsection \ref{Sampl:sec:mdm-mmm-new}. Theorem \ref{thm:repr} then reduces to the setting of metric measure spaces as follows:
\begin{cor}
\label{cor:repr}
Let $\rho$ be an exchangeable $\Uu$-valued random variable that is a.\,s.\ dust-free. Let $\chi=\psi(\rho)$. Let $\rho'$ be a $\Uu$-valued random variable whose conditional distribution given $\chi$ is $\nu^\chi$. Then:
\begin{enumerate}[label=(\roman{*}),ref=(\roman{*})]
\item\label{item:cor:repr-nd:sampling-psi} $\rho\in\Dd^*$ a.\,s.
\item\label{item:cor:repr-nd:sampl} $\rho$ and $\rho'$ are equal in distribution.
\item\label{item:cor:repr-nd:det} $\chi=\psi(\rho')$ a.\,s.
\end{enumerate}
\end{cor}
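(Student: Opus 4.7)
The plan is to deduce the corollary from Theorem \ref{thm:repr} via the dust-free identification of marked metric measure spaces with metric measure spaces given in Remark \ref{rem:mmm-dust-free}. Since $\rho$ is a.s.\ dust-free, $\Upsilon(\rho)=0$ and hence $\beta(\rho)=(\rho,0)$ a.s. Assertion \ref{item:cor:repr-nd:sampling-psi} is then immediate: Theorem \ref{thm:repr}\ref{item:thm:repr:sampling-psi} gives $(\rho,0)\in\hat\Dd^*$ a.s., and Remark \ref{Sampl:rem:psi-hatpsi} converts this to $\rho\in\Dd^*$ a.s.

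Next I would set $\hat\chi=\hat\psi(\rho,0)$. Unpacking the definition of $\hat\psi$ when all second coordinates vanish, a representative of $\hat\chi$ has the form $(X,\rho,\mu\otimes\delta_0)$, where $X$ is the completion of $(\N,\rho)$ and $\mu$ is precisely the weak limit of $n^{-1}\sum_{i=1}^n\delta_i$ underlying $\psi(\rho)=\chi$. The brief check needed here is that weak convergence of $n^{-1}\sum_{i=1}^n\delta_{(i,0)}$ on $X\times\R_+$ is equivalent to weak convergence of $n^{-1}\sum_{i=1}^n\delta_i$ on $X$, which follows from the continuous mapping theorem applied to the projection $X\times\R_+\to X$ and the section $x\mapsto(x,0)$. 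Remark \ref{rem:mmm-dust-free} then yields $\alpha(\nu^{\hat\chi})=\nu^\chi$ a.s.

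Since $\hat\chi$ is a measurable function of $\chi$, a version of $\rho'$ with conditional distribution $\nu^\chi$ given $\chi$ is also a version with conditional distribution $\alpha(\nu^{\hat\chi})$ given $\hat\chi$, so Theorem \ref{thm:repr}\ref{item:thm:repr:sampl} delivers \ref{item:cor:repr-nd:sampl}. For \ref{item:cor:repr-nd:det}, note that $\rho'\stackrel{d}{=}\rho$ inherits a.s.\ dust-freeness and membership in $\Dd^*$, so $\beta(\rho')=(\rho',0)$ a.s.; Theorem \ref{thm:repr}\ref{item:thm:repr:det} gives $\hat\chi=\hat\psi(\rho',0)$ a.s., and reading off the metric measure space component via Remark \ref{rem:mmm-dust-free} transfers this to $\chi=\psi(\rho')$ a.s.

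No serious obstacle is expected; the argument is a direct translation through the dust-free identification between $\hat\M$ and $\M$, with the only genuinely new item being the weak-convergence equivalence noted above that links $\hat\Dd^*$ to $\Dd^*$ in the vanishing-mark case.
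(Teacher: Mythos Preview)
Your proof is correct and follows essentially the same route as the paper: the paper's proof is the one-liner ``This is immediate from Theorem~\ref{thm:repr} and Remarks~\ref{rem:mmm-dust-free}, \ref{rem:sampling-dust-free}, and \ref{Sampl:rem:psi-hatpsi}'', and you have simply unpacked those citations. The only extra ingredient you supply is the explicit check that $\hat\chi$ and $\chi$ determine each other (so that conditioning on one is the same as conditioning on the other) and the weak-convergence equivalence linking $\hat\Dd^*$ to $\Dd^*$ when all marks vanish, both of which are straightforward.
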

\begin{proof}
This is immediate from Theorem \ref{thm:repr} and Remarks \ref{rem:mmm-dust-free}, \ref{rem:sampling-dust-free}, and \ref{Sampl:rem:psi-hatpsi}.
\end{proof}
\begin{rem}
\label{rem:mm-Vershik}
The assertions of Corollary \ref{cor:repr} are closely related to Vershik \cite{Vershik02}: Condition (4) in \cite{Vershik02}*{Theorem 5} is a necessary and sufficient condition for an exchangeable (and ergodic) random semi-metric to have the distance matrix distribution of a metric measure space.
By Remark \ref{rem:sampling-dust-free}, the marked metric measure space $\chi$ in Theorem \ref{thm:repr} is a.\,s.\ dust-free if and only if $\rho$ is a.\,s.\ dust-free. Hence, for a semi-ultrametric $\rho$, condition (4) in \cite{Vershik02} is equivalent to dust-freeness. In the dust-free case, the metric measure space associated with $\chi$ as in Remark \ref{rem:mmm-dust-free} is the completion of a typical realization of the semi-metric, endowed with the probability measure given by the asymptotic block frequencies of the associated coalescent (as in Remark \ref{Sampl:rem:psi-families}). This can also be deduced from \cite{Vershik02}*{Equation (9)}.
Assertion \ref{item:cor:repr-nd:det} can be proved by Proposition \ref{Sampl:prop:mmm-mdm-mmm} below which is related to \cite{Vershik02} as stated in Remark \ref{Sampl:rem:mmm-mdm-mmm}.
\end{rem}

\subsection{Interpretation as ergodic decomposition}
\label{sec:ergodic}
In this subsection, we interpret the representation from Theorem \ref{thm:repr} as the ergodic decomposition of an exchangeable distribution on the semi-ultrametrics on $\N$.

We denote by $\UU$ the space of exchangeable probability distributions on $\Uu$, and we endow $\UU$ with the Prohorov metric $\dP$ which is complete and separable.  We will also consider the subspace\label{Sampl:not:UUerg}
\[\UUerg=\{\xi\in\UU: \xi=\alpha(\nu^{(X,r,m)})\text{ for some marked metric measure space }(X,r,m)\}\]
of distance matrix distributions of marked metric measure spaces. The sets $\UUerg$ and $\hat\U$ are in one-to-one correspondence by Proposition~\ref{Sampl:prop:sampl-unique}. Hence, also the elements of $\UUerg$ can be seen as trees.

We define the invariant sigma algebra $\Ii$ on $\Uu$ as the sigma algebra that is  generated by those Borel sets $B\subset \Uu$ that satisfy $B=\{(\rho(p(i),p(j)))_{i,j\in\N}:\rho\in B\}$ for all finite permutations $p\in S_\infty$.
A distribution $\xi$ on $\Uu$ is called ergodic (with respect to the action of the group $S_\infty$ of finite permutations) if $\xi(I)\in\{0,1\}$ for all $I\in\mathcal I$.

\begin{prop}
\label{prop:mmm-ergodic}
The distance matrix distribution $\alpha(\nu^{(X,r,m)})$ of a marked metric measure space $(X,r,m)$ is invariant and ergodic with respect to the action of the group of finite permutations.
\end{prop}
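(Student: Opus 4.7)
The plan is to realize $\alpha(\nu^{(X,r,m)})$ as the pushforward of an i.i.d.\ product measure under a permutation-equivariant map, and then deduce both invariance and ergodicity from standard properties of i.i.d.\ sequences (symmetry, plus the Hewitt-Savage zero-one law).

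More concretely, let $(x(i),v(i))_{i\in\N}$ be an $m$-i.i.d.\ sequence in $X\times\R_+$, and define
\[\Phi:(X\times\R_+)^\N\to\R_+^{\N^2},\quad ((x_i,v_i))_{i\in\N}\mapsto\bigl((r(x_i,x_j)+v_i+v_j)\I{i\neq j}\bigr)_{i,j\in\N}.\]
By definition, $\alpha(\nu^{(X,r,m)})=\Phi_*(m^{\otimes\N})$. First I would check equivariance: for every $p\in S_\infty$ and every sequence $y=((x_i,v_i))_i$,
\[\Phi(p(y))_{i,j}=(r(x_{p(i)},x_{p(j)})+v_{p(i)}+v_{p(j)})\I{i\neq j}=\Phi(y)_{p(i),p(j)}=p(\Phi(y))_{i,j},\]
so $\Phi\circ p=p\circ\Phi$. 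Invariance of $\alpha(\nu^{(X,r,m)})$ under the $S_\infty$-action on $\Uu$ is then immediate, because $m^{\otimes\N}$ is invariant under coordinate permutations of a product of identical factors.

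For ergodicity, let $I\in\Ii$, i.e.\ $I$ is a Borel subset of $\Uu$ with $p(I)=I$ for every $p\in S_\infty$. By the equivariance of $\Phi$, the preimage $\Phi^{-1}(I)$ is a Borel subset of $(X\times\R_+)^\N$ that is invariant under every finite permutation of the coordinates. Since $(x(i),v(i))_{i\in\N}$ is an i.i.d.\ sequence under $m^{\otimes\N}$, the Hewitt-Savage zero-one law gives $m^{\otimes\N}(\Phi^{-1}(I))\in\{0,1\}$, hence $\alpha(\nu^{(X,r,m)})(I)=m^{\otimes\N}(\Phi^{-1}(I))\in\{0,1\}$, as required.

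There is no serious obstacle: once the equivariance of $\Phi$ is written down, the assertion reduces to Hewitt-Savage. The only minor points to double-check are measurability of $\Phi$ (clear from the product structure and continuity of $r$) and the fact that the invariant sigma algebra $\Ii$ in the statement of ergodicity is generated by sets $I$ with $p(I)=I$ for all $p\in S_\infty$, which is precisely the version of invariance needed to apply Hewitt-Savage to the pulled-back event.
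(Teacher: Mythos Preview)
Your proof is correct and follows essentially the same route as the paper's: the paper also pulls back an invariant set $I\in\Ii$ along the sampling map to obtain a permutation-invariant event $\tilde I\subset(X\times\R_+)^\N$ (your $\Phi^{-1}(I)$) and then invokes the ergodicity of an $m$-i.i.d.\ sequence, which is exactly the Hewitt--Savage zero--one law you cite. Your write-up is somewhat more explicit about the equivariance of the map $\Phi$ and about invariance of the pushforward, but the underlying argument is identical.
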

\begin{proof}
This is analogous to \cite{Vershik02}*{Lemma 7}. For $I\in\mathcal I$, the Borel set $\tilde I\subset (X\times\R_+)^\N$ that given by
\[\tilde I=\{(x(i),v(i))_{i\in\N}\in(X\times\R_+)^\N:((v(i)+r(x(i),x(j))+v(j))\I{i\neq j})_{i,j\in\N}\in I\},\]
is invariant under finite permutations, that is,
\[\tilde I=\{(x(p(i)),v(p(i)))_{i\in\N}:(x,v)\in\tilde I\}\quad\text{for all }p\in S_\infty.\]
From the ergodicity of an $m$-iid sequence $(x(i),v(i))_{i\in\N}$, we obtain
\[\alpha(\nu^{(X,r,m)})(I)=\P((x,v)\in\tilde I)\in\{0,1\}.\]
\end{proof}

\begin{prop}
\label{prop:UUerg}
The subset $\UUerg\subset\UU$ consists of the ergodic distributions.
\end{prop}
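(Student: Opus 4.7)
The plan is to prove the proposition by handling the two inclusions separately. For $\UUerg\subset\{\xi\in\UU:\xi\text{ is ergodic}\}$, nothing further is needed: this is exactly the content of Proposition~\ref{prop:mmm-ergodic}. The content of the proposition is therefore the converse, so I would start from an ergodic $\xi\in\UU$, let $\rho$ be a $\Uu$-valued random variable with $\mathrm{law}(\rho)=\xi$, and set $\chi=\hat\psi\circ\beta(\rho)$. By Proposition~\ref{Sampl:prop:meas} (and the measurability of $\beta$), $\chi$ is a well-defined $\hat\M$-valued random variable, and by Theorem~\ref{thm:repr}\ref{item:thm:repr:sampling-psi} we have $\beta(\rho)\in\hat\Dd^*$ a.\,s., so $\chi$ actually lies in $\hat\U$ almost surely.

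The key step is to show that $\chi$ is measurable with respect to the invariant sigma algebra $\Ii$, i.\,e.\ that $\hat\psi\circ\beta$ is $S_\infty$-invariant on $\Uu$. The map $\beta$ is equivariant, $\beta(p(\rho))=p(\beta(\rho))$ for every $p\in S_\infty$, which is immediate from the definitions of $\Upsilon$ and of the function $(r,v)\mapsto(\rho(i,j)-v(i)-v(j))\I{i\neq j}$, both of which treat the index set $\N$ symmetrically. It then remains to check that $\hat\psi(p(r,v))=\hat\psi(r,v)$ in $\hat\M$ for every $p\in S_\infty$ and every $(r,v)\in\Dd\times\R_+^\N$. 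The bijection $p:(\N,r\circ p)\to(\N,r)$ is a surjective isometry, and it pushes the empirical measure $n^{-1}\sum_{i=1}^n\delta_{(i,v(p(i)))}$ (appearing in the definition of the sampling measure of $\hat\psi(p(r,v))$) forward to $n^{-1}\sum_{i=1}^n\delta_{(p(i),v(p(i)))}$. Since $p$ is a finite permutation, there is $N$ with $p(k)=k$ for $k>N$, so for $n\geq N$ the latter sum equals $n^{-1}\sum_{j=1}^n\delta_{(j,v(j))}$. Hence the two weak limits in the definitions of $\hat\psi(p(r,v))$ and $\hat\psi(r,v)$ either both exist and give isomorphic marked metric measure spaces, or they both fail to exist (and then both fall back onto the same default $\delta_{(1,0)}$); in either case $\hat\psi(p(r,v))=\hat\psi(r,v)$ as isomorphy classes. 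This is the bookkeeping step I expect to be the main (though still routine) obstacle, and it is the only place where the specific construction of $\hat\psi$ is used.

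Now $\chi$ is $\Ii$-measurable, and $\hat\M$ is Polish, so by ergodicity of $\xi$ the $\hat\M$-valued random variable $\chi$ is almost surely equal to a deterministic constant $\chi_0\in\hat\U\subset\hat\M$. By Theorem~\ref{thm:repr}\ref{item:thm:repr:sampl}, the distribution of $\rho$ equals the first-moment measure $\E[\alpha(\nu^\chi)]$, and since $\chi=\chi_0$ a.\,s., this reduces to $\xi=\alpha(\nu^{\chi_0})$. Thus $\xi\in\UUerg$, completing the converse inclusion and the proof.
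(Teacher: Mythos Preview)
Your proof is correct but proceeds differently from the paper. The paper argues as follows: by Theorem~\ref{thm:repr}\ref{item:thm:repr:sampl} every $\xi\in\UU$ is a mixture of elements of $\UUerg$; by Proposition~\ref{prop:mmm-ergodic} the elements of $\UUerg$ are ergodic; and since ergodic measures are extreme in the convex set $\UU$ (Kallenberg, Lemma~A1.2), an ergodic $\xi$ cannot be a nontrivial mixture and therefore must itself lie in $\UUerg$.

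You instead verify directly that the decomposition map $\hat\psi\circ\beta$ is $S_\infty$-invariant, so that $\chi$ is $\Ii$-measurable and hence a.\,s.\ constant under an ergodic law; then Theorem~\ref{thm:repr}\ref{item:thm:repr:sampl} pins down $\xi=\alpha(\nu^{\chi_0})$. This is a legitimate alternative: it avoids the extreme-point characterisation at the cost of the explicit invariance check, and it has the mild advantage of identifying $\chi_0$ concretely. One small remark on your bookkeeping in the default case: the two fallback measures $\delta_{(1,0)}$ live in different completions, so ``the same default'' is slightly imprecise; however, any marked metric measure space with a Dirac sampling measure $\delta_{(x_0,0)}$ has the same marked distance matrix distribution (namely that of the zero matrix with zero marks), so the two defaults are indeed isomorphic and your conclusion stands. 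Alternatively, since $\beta(\rho)\in\hat\Dd^*$ a.\,s.\ by Theorem~\ref{thm:repr}\ref{item:thm:repr:sampling-psi}, the default case is a null event and can simply be ignored.
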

\begin{proof}
By Theorem~\ref{thm:repr}\ref{item:thm:repr:sampl}, each element of $\UU$ is a mixture of elements of $\UUerg$.
The assertion follows by Proposition \ref{prop:mmm-ergodic} and as the ergodic distributions in $\UU$ are extreme in the convex set $\UU$ (see e.\,g.\ \cite{Kal05}*{Lemma A1.2}).
\end{proof}
\begin{rem}
\label{rem:erg-dec-map}
Theorem~\ref{thm:repr} decomposes the distribution of the exchangeable $\Uu$-valued random variable $\rho'$ into ergodic components in the sense of e.\,g.\ Theorem A1.4 in Kallenberg \cite{Kal05}. The function
\[\zeta:\Uu\to\UUerg,\quad\tilde\rho\mapsto\alpha(\nu^{\hat\psi\circ\beta(\tilde\rho)})\]
is a decomposition map in the sense of Varadarajan \cite{Var63}*{Section 4} so that typically, $\zeta(\rho')$ is the ergodic component in whose support the realization $\rho'$ lies. Note that this ergodic component is characterized by the isomorphy class $\chi=\hat\psi\circ\beta(\rho')$ of a marked metric measure space, and in the dust-free case also by the isomorphy class $\psi(\rho')$ of a metric measure space. Some further references on the ergodic decomposition are given e.\,g.\ in \cite{Kal05}*{p.\,475}.
\end{rem}

By the following proposition, $(\UUerg,\dP)$ is Polish which will be applied in \cite{Conv}.
\begin{prop}
\label{Sampl:prop:sampl-closed}
The subspace $\UUerg$ is closed in $(\UU,\dP)$.
\end{prop}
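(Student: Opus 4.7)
The plan is to show that any $\xi \in \UU$ arising as the Prohorov limit of a sequence $(\xi_n) \subset \UUerg$ is itself ergodic; by Proposition~\ref{prop:UUerg} this gives $\xi \in \UUerg$.

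First I would extract block independence in the prelimit. Writing $\xi_n = \alpha(\nu^{\chi_n})$ with $\chi_n \in \hat\U$, a random variable $\rho \sim \xi_n$ arises by sampling iid pairs $(x(i),v(i))_{i\in\N}$ from a representative of $\chi_n$ and setting $\rho(i,j) = r(x(i),x(j)) + v(i) + v(j)$ for $i \neq j$. Thus the restrictions of $\rho$ to the disjoint blocks $\{1,\ldots,k\}$ and $\{k+1,\ldots,2k\}$ are iid under $\xi_n$. Given a bounded continuous $F:\R^{k\times k}\to\R$, setting $F_1(\rho) := F((\rho(i,j))_{1\leq i,j\leq k})$ and $F_2(\rho) := F((\rho(k+i,k+j))_{1\leq i,j\leq k})$ therefore yields $\E_{\xi_n}[F_1 F_2] = (\E_{\xi_n}[F_1])^2$. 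Since $F_1$, $F_2$, and $F_1 F_2$ are bounded continuous on $\Uu$, Prohorov convergence $\xi_n \to \xi$ transfers this identity to the limit:
\[\E_\xi[F_1 F_2] = (\E_\xi[F_1])^2.\]
A standard monotone class argument then extends the factorization to $\xi(A_1 \cap A_2) = \xi(A_1)\xi(A_2)$ for all measurable $A_1 \in \sigma(\rho(i,j):1\leq i,j\leq k)$ and $A_2 \in \sigma(\rho(i,j):k+1\leq i,j\leq 2k)$.

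Next I would conclude ergodicity of $\xi$ by a Hewitt--Savage style zero-one argument. For $A \in \Ii$ and $\ep > 0$, choose a cylinder $A_k \in \sigma(\rho(i,j):1\leq i,j\leq k)$ with $\xi(A \triangle A_k) < \ep$ (possible since cylinders generate the Borel $\sigma$-algebra on $\Uu$). Let $\pi \in S_\infty$ swap $\{1,\ldots,k\}$ with $\{k+1,\ldots,2k\}$. Invariance of $A$ gives $\pi A = A$, while exchangeability of $\xi$ yields $\xi(A \triangle \pi A_k) < \ep$ and $\xi(\pi A_k) = \xi(A_k)$. Combining with the factorization from the previous step,
\[|\xi(A) - \xi(A_k)^2| = |\xi(A \cap \pi A) - \xi(A_k \cap \pi A_k)| < 2\ep.\]
Letting $\ep \to 0$ gives $\xi(A) = \xi(A)^2 \in \{0,1\}$, so $\xi$ is ergodic, hence $\xi \in \UUerg$.

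The step that requires most care is the Hewitt--Savage argument: one must verify that every invariant Borel set admits cylinder approximations for which the block factorization applies. This is unproblematic because the Borel $\sigma$-algebra on $\Uu$ is generated by finite-dimensional cylinders, and once $A_k$ depends only on the coordinates $\{1,\ldots,k\}$ one can always choose the block swap $\pi \in S_\infty$ to move $A_k$ to the disjoint block $\{k+1,\ldots,2k\}$ on which the preceding factorization applies verbatim. The transfer of the factorization $\E_{\xi_n}[F_1 F_2] = (\E_{\xi_n}[F_1])^2$ from bounded continuous cylinder functions to indicator functions via monotone class is routine.
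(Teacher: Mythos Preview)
Your argument is correct and follows essentially the same route as the paper: show that the block-independence (``dissociation'') property enjoyed by each $\xi_n$ passes to the weak limit $\xi$, and then deduce that $\xi$ is ergodic, hence in $\UUerg$ by Proposition~\ref{prop:UUerg}. The only difference is in packaging: the paper invokes \cite{Kal05}*{Lemma 7.35} twice as a black box for the equivalence of ergodicity and dissociation for jointly exchangeable arrays, whereas you spell out both halves directly --- the forward direction from the iid sampling structure, and the converse via the Hewitt--Savage zero-one argument. Your version is thus more self-contained; the paper's is shorter.
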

\begin{proof}
Let $(\rho^n,n\in\N)$ be a sequence of $\Uu$-valued random variables that converges in distribution to some $\Uu$-valued random variable $\rho$. Assume that for each $n\in\N$, the distribution of $\rho^n$ lies in $\UUerg$. Then $\rho^n$ has ergodic distribution by Proposition \ref{prop:mmm-ergodic}. Lemma 7.35 of \cite{Kal05} says that $\rho^n$ is dissociated, which means that for any disjoint $I_1,\ldots,I_k\subset\N$, the restrictions $(\rho^n(i,j))_{i,j\in I_1}$, $\ldots$, $(\rho^n(i,j))_{i,j\in I_k}$ are independent. As this property is preserved under the limit in distribution, it also holds for $\rho$, and another application of Lemma 7.35 of \cite{Kal05} and yields that $\rho$ has ergodic distribution. The assertion follows by Proposition \ref{prop:UUerg}.
\end{proof}

\section{Application to tree-valued processes}
\label{Sampl:sec:tv}
Using the function $\hat\psi$ from Section~\ref{Sampl:sec:mdm-mmm-new}, we map a Markov process $(\rho_t,t\in\R_+)$ whose states are exchangeable $\Uu$-valued random variables to a process with values in the space of isomorphy classes of marked metric measure spaces. At each time, the state of the image process is the marked metric measure space from the representation (Theorem~\ref{thm:repr}) of the state of the $\Uu$-valued process. We also consider the process of the distance matrix distributions of these marked metric measure spaces. In the dust-free case, we can also work with isomorphy classes of metric measure spaces and the map $\psi$ as in Corollary~\ref{cor:repr}. 

In the proof of Theorem \ref{Sampl:thm:tv} below, we use the criterion of Rogers and Pitman \cite{RP81}*{Theorem 2} to show that also the image processes are Markovian. A martingale problem for the $\Uu$-valued process $(\rho_t,t\in\R_+)$ or the $\hat\Uu$-valued process $(\beta(\rho_t),t\in\R_+)$ yields a martingale problem for the respective image process.

The so-called polynomials and marked polynomials, introduced in \cites{GPW09,DGP11} have been used as domains of martingale problems in e.\,g.\ \cites{GPW13,DGP12,DGP13}.
We recall them here, adapting the definition to our present
use of the marks.
The uniform continuity of the derivative in the definitions of $\C_n$ and $\hat\C_n$ below will turn out useful in \cite{Conv}.
For $n\in\N$, we write $[n]=\{1,\ldots,n\}$ for $n\in\N$, and we denote by $\gamma_n$\label{Sampl:not:gamma_n-matr} the restriction from $\R^{\N^2}\times\R^\N$ to $\R^{n^2}\times\R^n$, $\gamma_n(r,v)=((r(i,j))_{i,j\in[n]},(v(i))_{i\in[n]})$.
We denote also by $\gamma_n$\label{Sampl:not:gamma_n-mm} the restriction from $\R^{\N^2}$ to $\R^{n^2}$, $\gamma_n(\rho)=(\rho(i,j))_{i,j\in[n]}$. Let $\C_n$\label{Sampl:not:Cn} denote the set of bounded differentiable functions $\R^{n^2}\to\R$ with bounded uniformly continuous derivative. For $\phi\in\C_n$, we denote also by $\phi$ the function $\phi\circ\gamma_n:\R^{\N^2}\to\R$, and we call the function $\U\to\R$, $\chi\mapsto\nu^\chi\phi$ the polynomial associated with $\phi$. (Here and at other places, we use the notation $\xi f=\int \xi(dx)f(x)$\label{not:integral} for a measure $\xi$ and an integrable function $f$, and we view measures also as functionals on spaces of integrable functions.)
Similarly, we denote by $\hat\C_n$ the set of bounded differentiable functions $\R^{n^2}\times\R^n\to\R$ with uniformly continuous derivative. For $\phi\in\hat\C_n$, we denote also by $\phi$ the function $\phi\circ\gamma_n:\R^{\N^2}\times\R^\N\to\R$, and we call the function $\hat\U\to\R$, $\chi\mapsto\nu^\chi\phi$ the marked polynomial associated with $\phi$. (Usually, the argument $(r,v)$ of a function $\phi\in\hat\C$ will be a marked distance matrix.)
We write $\C=\bigcup_{n\in\N}\C_n$ and $\hat\C=\bigcup_n\hat\C_n$. We denote the set of polynomials by
\[\label{Sampl:not:Pi}\Pi=\{\U\to\R,\chi\mapsto\nu^\chi\phi:\phi\in\C\},\]
the set of marked polynomials by
\[\label{Sampl:not:hatPi}\hat\Pi=\{\hat\U\to\R,\chi\mapsto\nu^\chi\phi:\phi\in\hat\C\},\]
and we define the set of test functions
\[\label{Sampl:not:Cc}\Cc=\{\UUerg\to\R,\xi\mapsto\xi\phi:\phi\in\C\}.\]

For a metric space $E$, let $\Mb(E)$ denote the set of bounded measurable functions $E\to\R$. For a subset $\D\subset\Mb(E)$ and an operator $G:\D\to\Mb(E)$, we mean by a solution of the martingale problem $(G,\D)$ a progressive $E$-valued process $(X_t,t\in\R_+)$ such that for every $f\in\D$, the process
\[f(X_t)-\int_0^tGf(X_s)ds\]
is a martingale with respect to the filtration induced by $(X_t,t\in\R_+)$, cf.\ Ethier and Kurtz \cite{EK86}*{p.\,173}.

\begin{thm}
\label{Sampl:thm:tv}
Let $(\rho_t,t\in\R_+)$ be a $\Uu$-valued time-homogenous Markov process. Assume that for each $t\in\R_+$, the random variable $\rho_t$ is exchangeable.
Let $A:\C\to\Mb(\R^{\N^2})$ and $\hat A:\hat\C\to\Mb(\R^{\N^2}\times\R^{\N})$ be operators. Define the $\U$-valued process $(\chi_t,t\in\R_+):=(\psi(\rho_t),t\in\R_+)$, the $\hat\U$-valued process $(\hat\chi_t,t\in\R_+):=(\hat\psi(\beta(\rho_t)),t\in\R_+)$, and the $\UUerg$-valued process $(\xi_t,t\in\R_+):=(\alpha(\nu^{\hat\chi_t}),t\in\R_+)$. Then the following two assertions hold:
\begin{enumerate}[label=(\roman{*}),ref=(\roman{*})]
\item\label{Sampl:item:thm:tv:mmm} The process $(\hat\chi_t,t\in\R_+)$ is Markovian. If the $\hat\Uu$-valued process $(\beta(\rho_t),t\in\R_+)$ solves the martingale problem $(\hat A,\hat\C)$, then $(\hat\chi_t,t\in\R_+)$ solves the martingale problem $(\hat B,\hat\Pi)$, given by
\[\hat B\Phi(\chi)=\nu^\chi(\hat A\phi)\]
for all $\phi\in\hat\C$ with associated polynomial $\Phi$, and all $\chi\in\hat\U$.
\item\label{Sampl:item:thm:tv:dmd}
The process $(\xi_t,t\in\R_+)$ is Markovian. If  $(\rho_t,t\in\R_+)$ solves the martingale problem $(A,\C)$, then $(\xi_t,t\in\R_+)$ solves the martingale problem $(C,\Cc)$, given by
\[C\Psi(\xi)=\xi(A\phi)\]
for all $\xi\in\UUerg$ and $\phi\in\C$, and the function $\Psi\in\Cc$, $\xi'\mapsto\xi'\Psi$.
\end{enumerate}
Assertion~\ref{Sampl:item:thm:tv:mm} below holds under the additional assumption that $\rho_t$ is a.\,s.\ dust-free for each $t\in\R_+$.
\begin{enumerate}[label=(\roman{*}),ref=(\roman{*}),resume]
\item\label{Sampl:item:thm:tv:mm} The process $(\chi_t,t\in\R_+)$ is Markovian. If $(\rho_t,t\in\R_+)$ solves the martingale problem $(A,\C)$, then $(\chi_t,t\in\R_+)$ solves the martingale problem $(B,\Pi)$, given by
\[B\Phi(\chi)=\nu^\chi(A\phi)\]
for all $\phi\in\C$ with associated polynomial $\Phi$, and all $\chi\in\U$.
\end{enumerate}
\end{thm}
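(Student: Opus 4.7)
The plan is to apply the Rogers--Pitman criterion for Markov functions \cite{RP81}*{Theorem 2}, using the (marked) distance matrix distribution $\chi\mapsto\nu^\chi$ as the intertwining kernel from the image space to the source space. The key input is Theorem \ref{thm:repr} together with Remark \ref{rem:cond-repr}: applied to each marginal $\rho_t$, they yield the identity that the conditional distribution of $\beta(\rho_t)$ given $\hat\chi_t$ is precisely $\nu^{\hat\chi_t}$.

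For part \ref{Sampl:item:thm:tv:mmm}, I would take as kernel $\Lambda(\chi,\cdot)=\nu^\chi$ from $\hat\U$ to $\hat\Uu$. By Proposition \ref{Sampl:prop:sampl-unique} it is concentrated on $\hat\psi^{-1}(\{\chi\})$, and by Theorem \ref{thm:repr}\ref{item:thm:repr:sampl} the law of $\beta(\rho_0)$ equals the $\Lambda$-mixture of the law of $\hat\chi_0$. To verify the Rogers--Pitman intertwining, I would argue that starting from $\beta(\rho_0)\sim\nu^\chi$ the law of $\beta(\rho_t)$ is exchangeable (each $\nu^\chi$ being exchangeable by Proposition \ref{prop:mmm-ergodic}, combined with the hypothesis that $\rho_t$ is exchangeable), so Theorem \ref{thm:repr} decomposes it as $\int Q_t(\chi,d\chi')\nu^{\chi'}$ for some kernel $Q_t$ on $\hat\U$. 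Rogers--Pitman then gives Markovianness of $(\hat\chi_t)$ and, crucially, the stronger identity $\E[\phi(\beta(\rho_t))\mid\mathcal{F}_t^{\hat\chi}]=\nu^{\hat\chi_t}\phi=\Phi(\hat\chi_t)$, where $\mathcal{F}_t^{\hat\chi}=\sigma(\hat\chi_s:s\le t)$ is a subfiltration of the natural filtration of $(\beta(\rho_t))$.

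From this conditional identity the martingale problem follows by a standard projection argument. For $\phi\in\hat\C$ with associated $\Phi\in\hat\Pi$, the tower property together with the identity applied at time $u$ gives $\E[\hat A\phi(\beta(\rho_u))\mid\mathcal{F}_u^{\hat\chi}]=\hat B\Phi(\hat\chi_u)$; projecting the given $(\hat A,\hat\C)$-martingale $\phi(\beta(\rho_t))-\int_0^t\hat A\phi(\beta(\rho_s))ds$ onto the smaller filtration $\mathcal{F}^{\hat\chi}$ then yields the required $(\hat B,\hat\Pi)$-martingale for $\Phi(\hat\chi_t)-\int_0^t\hat B\Phi(\hat\chi_s)ds$.

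Part \ref{Sampl:item:thm:tv:dmd} is then immediate: since $\xi_t=\alpha(\nu^{\hat\chi_t})$ is a measurable function of $\hat\chi_t$ while the converse holds by Proposition \ref{Sampl:prop:sampl-unique}, the processes $(\xi_t)$ and $(\hat\chi_t)$ generate the same filtration, and the martingale problem transfers through the bijection $\hat\U\leftrightarrow\UUerg$. Part \ref{Sampl:item:thm:tv:mm} is the dust-free analog: Corollary \ref{cor:repr} replaces Theorem \ref{thm:repr} and the same Rogers--Pitman argument goes through with kernel $\chi\mapsto\nu^\chi$ from $\U$ to $\Uu$. The main technical obstacle I anticipate is the rigorous verification of the intertwining: one must propagate the assumed marginal exchangeability of $\rho_t$ into a statement about the conditional law of $\beta(\rho_t)$ given $\hat\chi_0$, which is where the ``version'' aspect of the construction (alluded to in the introduction) enters and where care is required.
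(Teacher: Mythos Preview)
Your approach is the same as the paper's: apply Rogers--Pitman \cite{RP81}*{Theorem 2} with kernel $\chi\mapsto\nu^\chi$ and map $\hat\psi$ (respectively $\psi$, respectively $\rho\mapsto\alpha(\nu^{\hat\psi\circ\beta(\rho)})$). Two small corrections and one comparison are worth making.

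First, for condition (a) of Rogers--Pitman you need $\nu^\chi(\hat\psi^{-1}\{\chi\})=1$, i.e.\ that sampling from $\chi$ and then applying $\hat\psi$ returns $\chi$ a.s. This is Proposition~\ref{Sampl:prop:mmm-mdm-mmm}, not Proposition~\ref{Sampl:prop:sampl-unique}; the latter only says that $\nu^\chi$ is concentrated on the image of $\beta$, which is a different statement.

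Second, the ``technical obstacle'' you flag at the end is handled in the paper exactly by observing that Rogers--Pitman works just as well with a random initial state; then condition~(b) need only be checked in the integrated form $\E[g(\beta(\rho_t))]=\E[\nu^{\hat\chi_t}g]$ for the given process, which is immediate from Theorem~\ref{thm:repr} applied to each marginal $\rho_t$ (this is Proposition~\ref{Sampl:prop:eq-RP} in the paper). One does not need to disintegrate over the initial value of $\hat\chi_0$.

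For the martingale problem, the paper proceeds slightly differently: rather than projecting onto the subfiltration via the conditional identity from Rogers--Pitman, it first uses the unconditional identity $\E[g(\beta(\rho_t))]=\E[\nu^{\hat\chi_t}g]$ to convert the mean-zero statement for $(\beta(\rho_t))$ into a mean-zero statement for $(\hat\chi_t)$, and then invokes the already-established Markov property of $(\hat\chi_t)$ to upgrade this to a martingale. Your projection argument is equally valid and arguably cleaner. Similarly, for part~\ref{Sampl:item:thm:tv:dmd} the paper reruns the Rogers--Pitman argument with a new map and kernel rather than transporting through the bijection $\hat\U\leftrightarrow\UUerg$ as you do; your route is shorter and correct.
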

The proof of Theorem \ref{Sampl:thm:tv} can be found in Section \ref{sec:proof:tv}.
\begin{rem}
\label{rem:Markov-beta}
The process $(\beta(\rho_t),t\in\R_+)$ in Theorem~\ref{Sampl:thm:tv} is Markov. This follows as $(\rho_t,t\in\R_+)$ is Markov by assumption and as $\rho_t$ is determined by $\beta(\rho_t)$ via $\rho_t=\alpha(\beta(\rho_t))$ so that
\[\E[f(\beta(\rho_u))|(\beta(\rho_s))_{s\leq t}]
=\E[f(\beta(\rho_u))|(\rho_s)_{s\leq t}]
=\E[f(\beta(\rho_u))|\rho_t]
=\E[f(\beta(\rho_u))|\beta(\rho_t))]\quad\text{a.\,s.}\]
for all $s\leq t\leq u$ and bounded measurable $f:\hat\Uu\to\R$. This is an example for Dynkin's criterion \cite{Dynkin}*{Theorem 10.13} for a function of a Markov process to be Markov.
\end{rem}
\begin{rem}
\label{Sampl:rem:tv-df}
In Theorem~\ref{Sampl:thm:tv}, if $\rho_t$ is dust-free for some $t\in\R_+$, then $\hat\chi_t$ is (by Theorem \ref{thm:repr} and Remark \ref{rem:sampling-dust-free} the isomorphy class of a) dust-free marked metric measure space, $\chi_t$ is the (isomorphy class of the) metric measure space associated (as in Remark \ref{rem:mmm-dust-free}) with (any representative of) $\hat\chi_t$, and we have $\xi_t=\nu^{\chi_t}$.
The process $(\chi_t,t\in\R_+)$ is relevant only in the dust-free case: If $\rho_t$ is not dust-free, then $\psi(\rho_t)$ is just the arbitrary element of $\M$ from the definition of $\psi$ in Section~\ref{Sampl:sec:mdm-mmm-new}.
\end{rem}
\begin{rem}
\label{rem:version}
In Theorem \ref{Sampl:thm:tv}, we characterize only versions of the processes $(\hat\chi_t,t\in\R_+)$, $(\chi_t,t\in\R_+)$, and $(\xi_t,t\in\R_+)$. That is, we do not make assertions on the full sample paths but only on the states at countably many times.

From Theorem~\ref{thm:repr}, we obtain $\beta(\rho_t)\in\hat\Dd^*$ (and in the dust-free case also $\rho_t\in\Dd^*$ by Corollary~\ref{cor:repr}) only for a fixed time $t$ (or countably many $t$) on an event of probability $1$.
This means that the uniform probability measures on the starting vertices of the external branches that end in the first $n$ leaves of the tree associated with the semi-ultrametric $\rho_t$ are shown to converge only at countably many times $t$ on an event of probability $1$. For $\beta(\rho_t)\in\hat\Dd^*$, a realization $\hat\chi_t=\hat\psi(\beta(\rho_t))$ can be considered as an ergodic component. 
At the other times $t$, we do not exclude that $\hat\chi_t=\hat\psi(\beta(\rho_t))$ is just the arbitrary element of $\hat\M$ with probability measure $\delta_{(1,0)}$ in the definition of $\hat\psi$ in Section \ref{Sampl:sec:mdm-mmm-new}.

Theorem \ref{Sampl:thm:tv} yields in particular the semigroups of the processes $(\hat\chi_t,t\in\R_+)$, $(\chi_t,t\in\R_+)$, and $(\xi_t,t\in\R_+)$. Also the martingale problems in Theorem \ref{Sampl:thm:tv} characterize only versions of these processes.

For the particular example of the process $(\rho_t,t\in\R_+)$ in Sections \ref{Sampl:sec:ld-gen} --  \ref{Sampl:sec:equil}, it is shown in \cite{Pathw} that $\beta(\rho_t)\in\hat\Dd^*$ (and $\rho_t\in\Dd^*$ in the dust-free case) also holds simultaneously for all $t\in\R_+$ on an event of probability $1$ (see Theorems 3.1(i) and 3.10(i), and Remarks 4.4 and 4.13 in \cite{Pathw}). This allows to construct the full sample paths (Section 4 in \cite{Pathw}). These results are obtained in \cite{Pathw} by techniques specific to the lookdown model.
\end{rem}
\begin{rem}
Theorem~\ref{Sampl:thm:tv} is an example for Markov mapping. To show that the image processes $(\hat\psi(\beta(\rho_t)),t\in\R_+)$, $(\xi_t,t\in\R_+)$, and $(\psi(\rho_t),t\in\R_+)$ are Markovian, we use the simple criterion of Rogers and Pitman \cite{RP81}*{Theorem 2} as this criterion is formulated in terms of the abstract semigroups of the processes, which fits to our assumption that $(\rho_t,t\in\R_+)$ is a general time-homogenous Markov process whose states $\rho_t$ are exchangeable.

A criterion for the Markov property of the image processes in terms of martingale problems is given in Corollary 3.5 of Kurtz \cite{Kurtz98} which requires more assumptions, including uniqueness for the martingale problem for $(\rho_t,t\in\R_+)$ and existence of solutions of the martingale problems for the image processes.
Corollary 3.5 of \cite{Kurtz98} would also yield uniqueness for the martingale problems for the image processes.

In the present paper, we use martingale problems only to provide additional characterizations of the processes under consideration. In Proposition  \ref{Sampl:prop:mp-unique-mm}, we show uniqueness for the martingale problems for the image processes directly by duality for the concrete examples from Section \ref{Sampl:sec:TVFV}.
\end{rem}
\begin{rem}
\label{rem:conv-det}
In particular in Sections \ref{Sampl:sec:semigroup} -- \ref{Sampl:sec:equil}, \ref{sec:proof-uniqueness} and in \cite{Conv}, we need convergence determining (or at least separating) sets of test functions.
As in \cites{Lohr13,GPW09,DGP11}, the sets $\Pi$ and $\hat\Pi$ are convergence determining in $\U$ and $\hat\U$, respectively.
The argument from \cite{Lohr13}*{Corollary 2.8} also applies for $\Cc$: The algebra $\C$ generates the product topology on $\R^{\N^2}$.
By a theorem due to Le Cam, see e.\,g.\ \cite{Lohr13}*{Theorem 2.7} and the references therein, it follows that $\C$ is convergence determining in $\Uu$. Hence, $\Cc$ generates the weak topology on $\UUerg$. As $\hat\Pi$ is an algebra (see \cites{GPW09,DGP11})
and by definition of $\UUerg$, also $\Cc$ is an algebra.
Again by \cite{Lohr13}*{Theorem 2.7}, it follows that $\Cc$ is convergence determining in $\UUerg$.
\end{rem}
\begin{rem}
The set of polynomials
$\Pi'=\{\hat\U\to\R,\chi\mapsto\alpha(\nu^\chi)\phi:\phi\in\C\}$
is separating on $\hat\U$. This follows from Propositions~\ref{Sampl:prop:sampl-unique} and \ref{Sampl:prop:mmm-mdm-mmm} as in the proof of Proposition~\ref{Sampl:prop:uniqueness}. Nevertheless, we work with the space $\hat\Pi$ of test functions on $\hat\M$ as $\Pi'$ is not convergence determining, a counterexample can be constructed from \cite{GPW09}*{Example 2.12(ii)}.
\end{rem}

\section{Genealogy in the lookdown model}
\label{Sampl:sec:ld-gen}
In this section, we define a Markov process $(\rho_t,t\in\R_+)$ to which we will later apply Theorem~\ref{Sampl:thm:tv}. In Subsection \ref{sec:det-ld-gen}, we read off a realization of such a process from a population model that is driven by a deterministic point measure $ \eta$. In Subsection \ref{Sampl:sec:Xi-ld}, we let $\eta$ be a Poisson random measure, and we study further properties of $(\rho_t,t\in\R_+)$ in Subsection \ref{sec:ld-gen-t}.
We remark that for the lookdown model of Donnelly and Kurtz \cite{DK96}, the process of the evolving genealogical distances and its martingale problem are considered in Remark 2.20 of Greven, Pfaffelhuber, and Winter \cite{GPW13}.

\subsection{The deterministic construction}
\label{sec:det-ld-gen}
We denote by $\p$ the set of partitions of $\N$. We endow $\p$ with the topology in which a sequence of partitions converges if and only if the sequences of their finite restrictions converge.
For $n\in\N$, we denote by $\p_n$ the set of partitions of $[n]=\{1,\ldots,n\}$.
We denote the restriction map\label{Sampl:not:gamma_n-p} from $\p$ to $\p_n$ by $\gamma_n$, that is, $\gamma_n(\pi)=\{B\cap[n]:B\in\pi\}\setminus\{\emptyset\}$. Recall that other restriction maps, e.\,g.\ from $\R^{\N^2}\to\R^{n^2}$ are also denoted by $\gamma_n$. 
Moreover, we denote by $\mathbf{0}_n=\{\{1\},\ldots,\{n\}\}$ the partition in $\p_n$ that consists of singletons only, and by
$\p^n=\{\pi\in\p:\gamma_n(\pi)\neq\mathbf{0}_n\}$\label{Sampl:not:p^n}
the set of partitions of $\N$ in which the first $n$ integers are not all in different blocks.
Furthermore, for $\pi\in\p$,
we denote by $B_1(\pi),B_2(\pi),\ldots$\label{Sampl:not:Bi} the enumeration of the blocks of $\pi$ with $\min B_1(\pi)<\min B_2(\pi)<\ldots$.
For $i\in\N$, we denote by $\pi(i)$ the integer $j$ that satisfies $i\in B_j(\pi)$.

We use a lookdown model as the population model. In this model, there are countably infinitely many levels which are labeled by $\N$, and each level is occupied by one particle at each time $t\in\R_+$. The particles undergo reproduction events which are encoded by a simple point measure $\eta$ on $(0,\infty)\times\p$\label{Sampl:not:eta-det}. A simple point measure is a purely atomic measure whose atoms all have mass $1$.  Let us impose a further assumption on $\eta$, namely
\begin{equation}
\label{Sampl:eq:ass-eta}
\eta((0,t]\times \p^n)<\infty\quad\text{for all }t\in(0,\infty)\text{ and }n\in\N.
\end{equation}

The interpretation of a point $(t,\pi)$ of $\eta$ is that the following reproduction event occurs: At time $t-$, the particles on the levels $i\in\N$ with $i>\#\pi$ are removed. At time $t$, for each $i\in[\#\pi]$, the particle that was on level $i$ at time $t-$ assumes level $\min B_i(\pi)$ and has offspring on all other levels in $B_i(\pi)$. Thus, the level of a particle is non-decreasing as time evolves. Condition~\reff{Sampl:eq:ass-eta} means that for each $n\in\N$, only finitely many particles jump away from the first $n$ levels in bounded time intervals.

For all $0\leq s\leq t$, each particle at time $t$ has an ancestor at time $s$. We denote by $A_s(t,i)$\label{Sampl:not:A} the level of the ancestor at time $s$ of the particle on level $i$ at time $t$ such that the maps $s\mapsto A_s(t,i)$ and $t\mapsto A_s(t,i)$ are càdlàg. Then $A_s(t,i)$ is well-defined as
$s\mapsto A_{t-s}(t,i)$ is non-increasing.
\begin{rem}
\label{Sampl:rem:non-crossing}
We will use that the trajectories of the particles are non-crossing in the following sense:
For any times $s\leq t$ and particles $x,y$ on levels $i_x\leq i_y$ at time $s\in\R_+$, particle $x$ is still alive if particle $y$ is still alive, in which case the particles $x$ and $y$ occupy levels $j_x\leq j_y$.
In particular, if infinitely many particles at time $s$ survive until time $t$, then all particles at time $s$ survive until time $t$.
\end{rem}

We are interested in the process of the genealogical distances between the particles that live at the respective times. Let $\rho_0\in\R^{\N^2}$.
(We can assume $\rho_0\in\Uu$ here, but differentiability will be more elementary in the larger space, as a matter of taste.)
We interpret $\rho_0(i,j)$ as the genealogical distance between the particles on levels $i$ and $j$ at time $0$. We define the genealogical distance between the particles on levels $i$ and $j$ at time $t$ by
\[\label{Sampl:not:rhot}\rho_t(i,j)=\left\{\begin{aligned}
&2t-2\sup\{s\in[0,t]:A_s(t,i)=A_s(t,j)\}\quad\text{if }A_0(t,i)=A_0(t,j)\\
&2t+\rho_0(A_0(t,i),A_0(t,j))\quad\text{else.}
\end{aligned}\right.\]
In words, the genealogical distance between two particles at a fixed time is twice the time back to their most recent common ancestor, if such an ancestor exists, else it is given by the genealogical distance between the ancestors at time zero.
\begin{rem}
\label{Sampl:rem:rho-um}
If $\rho_0\in\Uu$, then $\rho_t\in\Uu$ for each $t\in\R_+$. Indeed, a semi-metric $\rho$ on $\N$ is a semi-ultrametric if and only if for each $s\in\R_+$, an equivalence relation $\sim$ on $\N$ is given by $i\sim j:\Leftrightarrow \rho(i,j)\leq s$. If this property holds for $\rho_0$, then the definition of $\rho_t$ readily yields that it also holds for $\rho_t$.
\end{rem}

We also describe the process $(\rho_t, t\in\R_+)$ in a more formal way which will be useful for the description by martingale problems in Section~\ref{Sampl:sec:Xi-ld}.
With each partition $\pi\in\p_n$ we associate a transformation $\R^{n^2}\to\R^{n^2}$, which we also denote by $\pi$\label{Sampl:not:pn-transf}, by
\begin{equation}
\label{Sampl:eq:pn-Un}
\pi(\rho)=(\rho(\pi(i),\pi(j)))_{i,j\in[n]}.
\end{equation}
Here $\pi(i)$ denotes the integer $k$ such that $i$ is in the $k$-th block, when blocks are ordered according to their minimal elements.
Note that for each reproduction event encoded by a point $(s,\pi)\in\eta$, the corresponding jump of the process $(\rho_t,t\in\R_+)$ can be described by
\begin{equation}
\label{Sampl:eq:jumps-pi-rhot}
\gamma_n(\pi)(\gamma_n(\rho_{s-}))=\gamma_n(\rho_s).
\end{equation}
In particular, $\gamma_n(\pi)=\mathbf{0}_n$ if $\pi\in\p\setminus\p^n$, and $\mathbf{0}_n$ acts as the identity on $\R^{n^2}$.
By assumption~\reff{Sampl:eq:ass-eta}, there are only finitely many reproduction events in bounded time intervals that result in a jump of the process $(\gamma_n(\rho_t(i,j)),t\in\R_+)$.
Between such jumps, the genealogical distances grow linearly with slope $2$, that is,
$\rho_t(i,j)+2s=\rho_{t+s}(i,j)$ for distinct $i,j\in[n]$ and $t,s\in\R_+$ with $\eta((t,t+s]\times\p^n)=0$.

\begin{rem}
\label{Sampl:rem:diff-ld}
Schweinsberg \cite{Schw00} constructs the $\Xi$-coalescent analogously from a point measure. The population model described in this section can be seen as the population model that underlies the dual flow of partitions in Foucart \cite{Foucart12}.  
A lookdown model with a reproduction mechanism that is different in the case with simultaneous multiple reproduction events is studied by Birkner et al.\ \cite{BBMST09}.
In this model, a partition $\pi\in\p$ encodes the following reproduction event: Let $i_1<i_2<\ldots$ be the increasing enumeration of the integers that either form singletons or are non-minimal elements of blocks of $\pi$. For each $j\in\N$, the particle on level $i_j$ moves to the level given by the $j$-th lowest singleton of $\pi$ if $\pi$ has at least $j$ singletons, else the particle is removed. For each non-singleton block $B\in\pi$, the particle on level $\min B$ remains on its level and has one offspring on each level in $B\setminus\{\min B\}$. Here the trajectories of the particles may cross: Consider a partition $\pi\in\p$ such that $1$ and $2$ are in the same block, $4$ forms a singleton, and $3$ is the minimal element of a non-singleton block. If the reproduction event encoded by $\pi$ occurs at time $t\in(0,\infty)$, then there exists $s\in(0,t)$ such that the particle on level $3$ at time $s$ is on level $3$ also at time $t$, and the particle on level $2$ at time $s$ jumps to level $4$ at time $t$. Such a crossing cannot occur in our population model by Remark~\ref{Sampl:rem:non-crossing}.
\end{rem}

\subsection{The \texorpdfstring{$\Xi$}{Xi}-lookdown model}
\label{Sampl:sec:Xi-ld}
The population model from the Subsection \ref{sec:det-ld-gen} will now be driven by a Poisson random measure $\eta$ on $(0,\infty)\times\p$ as in Schweinsberg \cite{Schw00}, Bertoin \cite{Bertoin}, and Foucart \cite{Foucart12}.

To define this Poisson random measure, we briefly recall Kingman's correspondence. For a full account, see e.\,g.\ \cite{Bertoin}*{Section 2.3.2}.
Kingman's correspondence is a one-to-one correspondence between the distributions of the exchangeable random partitions of $\N$ and the probability measures on the simplex
\[\Delta=\{x=(x_1,x_2,\ldots): x_1\geq x_2\geq \ldots\geq 0,\left|x\right|_1\leq 1\},\]
where $|x|_1=\sum_{i\in\N} x_i$\label{not:1-norm}. Every $x\in\Delta$ can be interpreted as a partition of $[0,1]$ into subintervals of lengths $x_1, x_2, \ldots$, and possibly another interval of length $1-\left|x\right|_1$ which may be called the dust interval. Let $U_1,U_2,\ldots$ be iid uniform random variables with values in $[0,1]$. The paintbox partition associated with $x$ is the exchangeable random partition of $\N$ where two different integers $i$ and $j$ are in the same block if and only if $U_i$ and $U_j$ fall into a common subinterval that is not the dust interval. This construction defines a probability kernel $\kappa$\label{Sampl:not:kappa} from $\Delta$ to $\p$. Conversely, every exchangeable random partition $\pi$ in $\p$ has distribution $\int_\Delta \xi(dx)\kappa(x,\cdot)$ for some distribution $\xi$ on $\Delta$. Here $x$ is the random vector in $\Delta$ of the asymptotic frequencies of the blocks of $\pi$.

Let $\Xi$ be a finite measure on $\Delta$. We decompose
\begin{equation}
\label{Sampl:eq:dec-Xi}
\Xi=\Xi_0+\Xi\{0\}\delta_0.
\end{equation}
For $i,j\in\N$ with $i\neq j$, we denote by $K_{i,j}$\label{Sampl:not:Kij} the partition in $\p$ that contains the block $\{i,j\}$ and apart from that only singleton blocks. We define a $\sigma$-finite measure $H_\Xi$ on $\p$ by
\begin{equation*}
\label{Sampl:not:HXi}
H_\Xi(d\pi)=\int_\Delta\kappa(x,d\pi)\left|x\right|_2^{-2} \Xi_0(dx) +  \Xi\{0\}\sum_{1\leq i<j}\delta_{K_{i,j}}(d\pi),
\end{equation*}
where $|x|_2=\left(\sum_{i\in\N}x_i^2\right)^{1/2}$.\label{not:2-norm}

Let $\eta$ be a Poisson random measure on $(0,\infty)\times\p$ with intensity $dt\;H_\Xi(d\pi)$\label{Sampl:not:eta-Poisson}.
Note that $\kappa(x,\p^n)\leq\binom{n}{2}|x|_2^2$ for all $x\in\Delta$ and $n\geq 2$. This follows as in the paintbox partition associated with $x$, the probability that two fixed integers belong to the same block is $|x|_2^2$. The random point measure $\eta$ thus satisfies condition \reff{Sampl:eq:ass-eta} a.\,s.\ as
\begin{equation}
\E[\eta((0,t]\times\p^n)]=\int\kappa(x,\p^n)|x|_2^{-2}\Xi_0(dx)
+\Xi\{0\}\binom{n}{2}<\infty
\end{equation}
for all $t\in\R_+$ and $n\in\N$. 
Hence, we can and will define the population model from Subsection \ref{sec:det-ld-gen} from almost every realization of $\eta$ and every $\rho_0\in\R^{\N^2}$.
We also let $\rho_0$ be a $\R^{\N^2}$-valued random variable that is independent of $\eta$.
We define the $\R^{\N^2}$-valued process $(\rho_t,t\in\R_+)$ realization-wise from the Poisson random measure $\eta$ and the random initial state $\rho_0$ as in the preceding subsection.

\begin{prop}
\label{prop:rho-Markov}
The process $(\rho_t,t\in\R_+)$ is Markov.
\end{prop}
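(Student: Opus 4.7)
The plan is to exhibit the standard flow-type structure of a Markov process driven by Poisson noise: show that $\rho_t$ is a deterministic measurable function of $\rho_s$ and the restriction $\eta|_{(s,t]\times\p}$, and then invoke independence of the Poisson random measure on disjoint time intervals to conclude the Markov property. Time-homogeneity will follow from the fact that the intensity $dt\,H_\Xi(d\pi)$ is translation-invariant in the time coordinate.

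The first and main step is the following \emph{semigroup identity on the construction}: for every $0\le s\le t$, there exists a measurable map $\Phi_{s,t}$ such that
\[
\rho_t=\Phi_{s,t}(\rho_s,\eta|_{(s,t]\times\p})\qquad\text{a.\,s.}
\]
To justify this, I would fix $i,j\in\N$ and trace the ancestors of the particles on levels $i,j$ at time $t$ back to time $s$. The ancestor levels $A_s(t,i),A_s(t,j)$ are determined by the reproduction events in $(s,t]$, hence by $\eta|_{(s,t]\times\p}$ (using \reff{Sampl:eq:ass-eta} to ensure finiteness in any finite level restriction). If $A_s(t,i)=A_s(t,j)$ the most recent common ancestor lies in $[s,t]$ and the value $\rho_t(i,j)=2t-2\sup\{u\in[s,t]:A_u(t,i)=A_u(t,j)\}$ is read off from $\eta|_{(s,t]\times\p}$ alone. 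If instead $A_s(t,i)\ne A_s(t,j)$, then by the non-crossing property (Remark~\ref{Sampl:rem:non-crossing}) the two genealogical lines do not re-merge in $[s,t]$, so substituting $s$ for $0$ in the definition \reff{Sampl:not:rhot} of $\rho_t$ gives $\rho_t(i,j)=2(t-s)+\rho_s(A_s(t,i),A_s(t,j))$. In either case only $\rho_s$ and $\eta|_{(s,t]\times\p}$ are used, and measurability of $\Phi_{s,t}$ is routine given the discrete, piecewise-constant nature of the dependence on $\eta$.

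The second step is to combine this with the Poisson structure. Let $\mathcal F_s$ denote the filtration generated by $(\rho_u)_{u\le s}$. Since $(\rho_u)_{u\le s}$ is a deterministic measurable function of $\rho_0$ and $\eta|_{(0,s]\times\p}$, and since $\eta|_{(s,t]\times\p}$ is independent of $(\rho_0,\eta|_{(0,s]\times\p})$ by the independence of the Poisson measure on disjoint sets and the independence of $\rho_0$ from $\eta$, the random element $\eta|_{(s,t]\times\p}$ is independent of $\mathcal F_s$. For any bounded measurable $f$ this yields
\[
\E[f(\rho_t)\mid\mathcal F_s]=\E[f(\Phi_{s,t}(\rho_s,\eta|_{(s,t]\times\p}))\mid\mathcal F_s]=g_{s,t}(\rho_s)\quad\text{a.\,s.,}
\]
with $g_{s,t}(x):=\E[f(\Phi_{s,t}(x,\eta|_{(s,t]\times\p}))]$, proving the Markov property. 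Finally, stationarity of the intensity $dt\,H_\Xi(d\pi)$ gives that $\eta|_{(s,t]\times\p}$ has the same law as $\eta|_{(0,t-s]\times\p}$, so $g_{s,t}$ depends on $(s,t)$ only through $t-s$ and the process is time-homogeneous.

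The only nontrivial point is the semigroup identity $\rho_t=\Phi_{s,t}(\rho_s,\eta|_{(s,t]\times\p})$; once that is in place the rest is a routine application of independent increments of $\eta$. The subtlety there is purely combinatorial-geometric, namely that when $A_s(t,i)\ne A_s(t,j)$ the lineages cannot coalesce in $(s,t]$, which is exactly what Remark~\ref{Sampl:rem:non-crossing} supplies.
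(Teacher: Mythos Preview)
Your argument is correct and follows essentially the same route as the paper: write $\rho_t$ (or its finite restrictions) as a measurable function of $\rho_s$ and $\eta|_{(s,t]\times\p}$, then use independence of the Poisson random measure on disjoint time intervals. The paper phrases this via the description around equation~\reff{Sampl:eq:jumps-pi-rhot} for the finite restrictions $\gamma_n(\rho_\cdot)$ rather than via explicit ancestor-tracing, but the content is the same. One small imprecision: the fact that $A_s(t,i)\neq A_s(t,j)$ forces $A_u(t,i)\neq A_u(t,j)$ for all $u\in[s,t]$ follows directly from the cocycle property $A_s(t,\cdot)=A_s(u,A_u(t,\cdot))$ of the ancestor map, not from the non-crossing property of Remark~\ref{Sampl:rem:non-crossing}; the latter concerns order preservation of levels, which is not what is needed here.
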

\begin{proof}
The description around equation~\reff{Sampl:eq:jumps-pi-rhot} implies that for $0\leq t<t'$ and each $n\in\N$, the conditional expectation of $\gamma_n(\rho_{t'})$ given $(\rho_s,s\leq t)$ is measurable with respect to $\rho_t$ and the restriction of $\eta$ to $(t,t']\times\p$. The assertion follows as $n\in\N$ was arbitrary and as the restrictions of a Poisson random measure to disjoint subsets are independent.
\end{proof}

For each $n\in\N$ and $\pi\in\p_n\setminus\{\mathbf{0}_n\}$, the rate at which reproduction events encoded by partitions in
$\gamma_n^{-1}(\pi)=\{\pi'\in\p:\gamma_n(\pi')=\pi\}$
occur in the lookdown model is given by $\lambda_\pi=H_\Xi(\gamma_n^{-1}(\pi))$. The rates $\lambda_\pi$ are calculated explicitly in~\reff{Sampl:eq:lambda-P-S} and \reff{Sampl:eq:lambda-S} in Section~\ref{Sampl:sec:dec-stoch}.
\begin{rem}
\label{Sampl:rem:rates-Schw}
The quantity $\lambda_\pi$ is the coagulation rate $q_\pi$ in Section 4.2.1 of Bertoin \cite{Bertoin}.
It is related to the quantity $\lambda_{n;k_1,\ldots,k_r;s}$ from Schweinsberg \cite{Schw00} by $\lambda_\pi=\lambda_{n;k_1,\ldots,k_r;s}$, where $k_1,\ldots,k_r$ denote the sizes of the non-singleton blocks of $\pi$, and $s=n-k_1-\ldots-k_r$. This can be seen by a comparison of equations~\reff{Sampl:eq:lambda-P-S} and \reff{Sampl:eq:lambda-S} with equation (11) in \cite{Schw00}.
In particular, equation (18) in \cite{Schw00} implies that $\eta$ satisfies a.\,s.\ condition~\reff{Sampl:eq:ass-eta}.
\end{rem}

In the next proposition, we state a martingale problem for the process $(\rho_t,t\in\R_+)$.

Recall the set $\C$ from Section~\ref{Sampl:sec:tv}. For $\phi\in\C$ and $\rho\in\R^{\N^2}$, we write
\begin{equation}
\label{eq:nabla-growth}
\langle \nabla \phi, \underline{\underline 2} \rangle (\rho)= 2\sum_{\substack{i,j\in\N\\ i\neq j}} \frac{\partial}{\partial \rho(i,j)}\phi(\rho).
\end{equation}

\begin{prop}
\label{Sampl:prop:mp-rho}
Define an operator $A=A_{\rm grow}+A_{\rm repr}$ with domain $\C$ by
\[A_{\rm grow}\phi(\rho)=\langle \nabla \phi, \underline{\underline 2 }\rangle(\rho)\]
and
\[A_{\rm repr}\phi(\rho)
=\sum_{\pi\in\p_n\setminus\{\mathbf{0}_n\}}\lambda_{\pi}(\phi(\pi(\gamma_n(\rho)))-\phi(\rho))\]
for $n\in\N$, $\phi\in\C_n$, and $\rho\in\R^{\N^2}$.
Then the stochastic process
$(\rho_t,t\in\R_+)$
solves the martingale problem $(A,\C)$.
\end{prop}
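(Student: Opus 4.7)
The plan is to fix $\phi\in\C_n$ for some $n\in\N$ and decompose the semimartingale $(\phi(\rho_t))_{t\geq 0}$ into a drift part coming from the deterministic growth of distances between reproduction events and a jump part coming from reproduction events. Since $\phi=\phi\circ\gamma_n$, only the restriction $\gamma_n(\rho_t)$ matters, and by condition \reff{Sampl:eq:ass-eta}, which holds $\P$-a.\,s.\ by Remark~\ref{Sampl:rem:rates-Schw}, there are only finitely many reproduction events affecting $\gamma_n(\rho_t)$ in any bounded time interval. Hence $(\phi(\rho_t))_{t\geq 0}$ is a pure-jump-plus-absolutely-continuous process.

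First I would write
\[\phi(\rho_t)-\phi(\rho_0)=\int_0^t\frac{d}{ds}\phi(\rho_s)\,\mathbf 1_{s\text{ not a jump time}}\,ds+\sum_{0<s\leq t}\bigl(\phi(\rho_s)-\phi(\rho_{s-})\bigr).\]
Between jumps, the coordinates of $\gamma_n(\rho_s)$ evolve linearly with slope $2$ on the off-diagonal (the diagonal is identically zero, so its derivative contributes nothing), so the chain rule yields
\[\frac{d}{ds}\phi(\rho_s)=\langle\nabla\phi,\underline{\underline 2}\rangle(\rho_s)=A_{\rm grow}\phi(\rho_s)\]
at non-jump times, giving the drift $\int_0^t A_{\rm grow}\phi(\rho_s)ds$.

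For the jump part, by \reff{Sampl:eq:jumps-pi-rhot} the sum over jumps equals the integral against the Poisson random measure $\eta$,
\[\sum_{0<s\leq t}\bigl(\phi(\rho_s)-\phi(\rho_{s-})\bigr)=\int_{(0,t]\times\p}\bigl(\phi(\gamma_n(\pi)(\gamma_n(\rho_{s-})))-\phi(\rho_{s-})\bigr)\,\eta(ds,d\pi),\]
where the integrand vanishes on the set $\{\gamma_n(\pi)=\mathbf 0_n\}$. Using that the integrand is bounded by $2\|\phi\|_\infty$ and supported on $\gamma_n^{-1}(\p_n\setminus\{\mathbf 0_n\})$, whose $H_\Xi$-mass is the finite sum $\sum_{\pi\in\p_n\setminus\{\mathbf 0_n\}}\lambda_\pi$, the standard compensation formula for Poisson integrals (e.\,g.\ Ikeda--Watanabe) shows that this integral minus its compensator
\[\int_0^t\!\!\int_{\p}\bigl(\phi(\gamma_n(\pi)(\gamma_n(\rho_s)))-\phi(\rho_s)\bigr)H_\Xi(d\pi)\,ds=\int_0^t A_{\rm repr}\phi(\rho_s)\,ds\]
is a martingale with respect to the filtration generated by $\rho_0$ and $\eta$. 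The equality above holds because the only contributing partitions are those in $\gamma_n^{-1}(\p_n\setminus\{\mathbf 0_n\})$, and for $\pi'\in\gamma_n^{-1}(\pi)$ the action on $\gamma_n(\rho)$ depends only on $\pi=\gamma_n(\pi')$, so the $H_\Xi$-integral collapses to the sum weighted by $\lambda_\pi=H_\Xi(\gamma_n^{-1}(\pi))$.

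Combining these pieces yields that $\phi(\rho_t)-\phi(\rho_0)-\int_0^tA\phi(\rho_s)ds$ is a martingale with respect to the filtration generated by $\rho_0$ and $\eta$, hence also with respect to the (smaller) filtration generated by $(\rho_s)_{s\leq t}$ itself by the tower property, which is what the definition of a solution to the martingale problem requires. The main obstacle is bookkeeping: justifying the chain rule between jumps (immediate since $\phi$ is $C^1$ and the paths are piecewise linear) and verifying the integrability needed to compensate the Poisson integral. Both are routine once one reduces to the restriction onto $[n]$, which makes the jump rate finite.
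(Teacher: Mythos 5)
Your argument is correct and is exactly the formalization the paper has in mind: the paper explicitly omits a formal proof and says the proposition "follows from the discussion above and the description of the process around equation \reff{Sampl:eq:jumps-pi-rhot}", i.e.\ the piecewise-linear growth with slope $2$ plus the jumps $\gamma_n(\rho_s)=\gamma_n(\pi)(\gamma_n(\rho_{s-}))$, compensated against the finite intensity $H_\Xi(\p^n)=\sum_{\pi\in\p_n\setminus\{\mathbf 0_n\}}\lambda_\pi$. Your additional bookkeeping (chain rule between jumps, the compensation formula for the Poisson random measure, and passing from the filtration of $(\rho_0,\eta)$ to the smaller filtration induced by $(\rho_t,t\in\R_+)$ via the tower property) is sound and completes the sketch.
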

Proposition~\ref{Sampl:prop:mp-rho} follows from the discussion above and the description of the process $(\gamma_n(\rho_t),t\in\R_+)$ around equation~\reff{Sampl:eq:jumps-pi-rhot}.
As in \cite{GPW13}, the operator $A_{\rm grow}$ reflects the growth of the genealogical distances between reproduction events that affects them. The operator $A_{\rm repr}$ stands for the jumps of the genealogical distances in reproduction events, as described by equation~\reff{Sampl:eq:jumps-pi-rhot}.
We omit a formal proof of Proposition~\ref{Sampl:prop:mp-rho}.
\begin{rem}
That the solutions of the martingale problems in Proposition \ref{Sampl:prop:mp-rho} and in Proposition \ref{Sampl:prop:mp-R} below are unique can be shown by the approach from Section \ref{sec:proof-uniqueness}. We do not use this assertion in the present paper.
\end{rem}

\subsection{Properties of the genealogy at a fixed time}
\label{sec:ld-gen-t}
We consider the process $(\rho_t,t\in\R_+)$ from Subsection \ref{Sampl:sec:Xi-ld}. To apply Theorem~\ref{Sampl:thm:tv}, we need exchangeability of the random variable $\rho_t$ for each $t\in\R_+$.
\begin{prop}
\label{Sampl:prop:rho-exch}
Let $t\in\R_+$ and assume that $\rho_0$ is exchangeable. Then $\rho_t$ is exchangeable.
\end{prop}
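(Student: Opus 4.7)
The plan is to propagate the assumed exchangeability of $\rho_0$ forward through the Markov dynamics. First I would reduce to showing $S_n$-exchangeability of the restriction $\gamma_n(\rho_t)$ for every $n\in\N$, since finite restrictions determine the law of $\rho_t$ on $\R^{\N^2}$. By Proposition~\ref{Sampl:prop:mp-rho}, the restricted process $(\gamma_n(\rho_t))_{t\geq 0}$ is a time-homogeneous Markov jump process on $\R^{n^2}$ whose infinitesimal dynamics consist of uniform linear growth of off-diagonal entries at rate $2$, plus jumps $\rho\mapsto\pi(\rho)$ at rate $\lambda_\pi$ for $\pi\in\p_n\setminus\{\mathbf{0}_n\}$.

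The key input is the $S_n$-invariance of the jump rates: by Remark~\ref{Sampl:rem:rates-Schw} and the paintbox construction of $H_\Xi$ from exchangeable paintbox partitions $\kappa(x,\cdot)$, the rate $\lambda_\pi$ depends only on the block-size profile of $\pi$, so $\lambda_{q\cdot\pi}=\lambda_\pi$ for every $q\in S_n$, where $q\cdot\pi:=\{q(B):B\in\pi\}$. Using this, I would verify that for each $q\in S_n$ and each $S_n$-invariant distribution $\nu$ on $\R^{n^2}$, the expectation $\E_\nu[A(\phi\circ q)-(A\phi)\circ q]$ vanishes for all $\phi\in\C_n$, so that the generator preserves $S_n$-invariance. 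The growth part contributes zero since $A_{\rm grow}(\phi\circ q)=(A_{\rm grow}\phi)\circ q$ pointwise. For the reproduction part, the change of variable $\pi\mapsto q\cdot\pi$ on the sum $\sum_\pi\lambda_\pi[\cdot]$ pairs each jump $\rho\mapsto\pi(\rho)$ with a jump from $q(\rho)$, and the residual reordering of blocks is absorbed by the $S_n$-invariance of $\nu$. Given that $\gamma_n(\rho_0)$ is $S_n$-exchangeable, the martingale problem then yields $S_n$-exchangeability of $\gamma_n(\rho_t)$ for all $t\geq 0$, and hence exchangeability of $\rho_t$.

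The main technical obstacle is the reordering issue: the identity $(q\cdot\pi)(q(\rho))=q(\pi(\rho))$ fails pointwise in $\R^{n^2}$, because the block-index function of $q\cdot\pi$ orders blocks by the minima of $\{q(B_k)\}$, introducing an additional permutation $\sigma\in S_{\#\pi}$ of block labels compared with the direct composition. The remedy is a distributional identity $q(\pi(\rho))\stackrel{d}{=}\tilde\sigma^{\ast}(\pi(\rho))$ for some $\tilde\sigma\in S_n$ with $q\circ\tilde\sigma^{-1}$ lying in the stabilizer of $\pi$ under the $S_n$-action on $\p_n$, where $\tilde\sigma^{\ast}$ denotes the corresponding row-column permutation of matrices. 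For $S_n$-invariant $\nu$, the laws of $\pi(\rho)$ and $\tilde\sigma^{\ast}(\pi(\rho))$ coincide, closing the argument. Carefully tracking this $\sigma$ (which depends on both $q$ and $\pi$) through the sum over partitions is the delicate step.
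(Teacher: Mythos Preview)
Your strategy is different from the paper's and, as written, has a genuine gap.

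\textbf{What the paper does.} The paper works pathwise rather than through the generator. For fixed $n$, condition~\reff{Sampl:eq:ass-eta} gives finitely many jump times $t_1<\dots<t_L$ in $(0,t]$ affecting the first $n$ levels, and one writes
\[
\gamma_n(\rho_t)=\lambda_{t-t_L}\circ\gamma_n(\pi_L)\circ\lambda_{t_L-t_{L-1}}\circ\cdots\circ\gamma_n(\pi_1)\circ\lambda_{t_1}(\gamma_n(\rho_0)),
\]
where $\lambda_s$ is the deterministic growth. Conditionally on the jump times, the $\gamma_n(\pi_k)$ are independent exchangeable random partitions of $[n]$. The key step is Lemma~\ref{Sampl:lem:exch-pi}: if $\pi$ is a \emph{random} exchangeable partition of $[n]$ independent of an exchangeable $\rho$, then $\pi(\rho)$ is exchangeable. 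The block-reordering permutation you worry about appears there as a map $f:\p_n\to S_n$ with $\pi'(\rho')=p\bigl(p(\pi')(f(\pi')(\rho'))\bigr)$; it is absorbed because $f(\pi)(\rho)\stackrel{d}{=}\rho$ by exchangeability of $\rho$, and $p(\pi)\stackrel{d}{=}\pi$ by exchangeability of the random $\pi$. Iterating the lemma along the jumps finishes the proof.

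\textbf{Where your argument breaks.} Your plan is to show $\E_\nu[A(\phi\circ q)]=\E_\nu[(A\phi)\circ q]$ for every $S_n$-invariant $\nu$, and conclude that the semigroup preserves $S_n$-invariance. But this implication is circular as stated: to propagate invariance from $\nu_0$ to $\nu_t$ via $\partial_t\nu_t(\phi)=\nu_t(A\phi)$ you would need the identity at $\nu_t$, which presupposes that $\nu_t$ is already invariant. One can rescue this by invoking well-posedness of the forward equation for the bounded-rate jump-plus-drift process on $\R^{n^2}$ and observing that the forward operator maps the closed linear subspace of $S_n$-invariant signed measures into itself; but you do not say this, and it is precisely the step the paper's pathwise argument is designed to avoid.

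\textbf{The remedy paragraph is also incorrect.} You claim a distributional identity $q(\pi(\rho))\stackrel{d}{=}\tilde\sigma^\ast(\pi(\rho))$ and then that, for $S_n$-invariant $\nu$, $\pi(\rho)\stackrel{d}{=}\tilde\sigma^\ast(\pi(\rho))$. The second assertion is false: $\pi(\rho)$ is \emph{not} exchangeable (for instance, if $\pi=\{\{1,2\},\{3\}\}$ then $\pi(\rho)(1,2)=0$ deterministically). The correct pointwise relation is
\[
q(\pi(\rho))=(q^{-1}\!\cdot\pi)\bigl(\tau(\rho)\bigr)
\]
for a permutation $\tau=\tau(q,\pi)\in S_n$, which for exchangeable $\rho$ yields $q(\pi(\rho))\stackrel{d}{=}(q^{-1}\!\cdot\pi)(\rho)$; the change of variables $\pi\mapsto q\cdot\pi$ together with $\lambda_{q\cdot\pi}=\lambda_\pi$ then gives what you want. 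This is exactly the content of the paper's Lemma~\ref{Sampl:lem:exch-pi}, phrased for a single deterministic $\pi$ and then summed. Once you fix this and add the well-posedness step, your generator argument becomes a valid alternative; the paper's pathwise iteration is simply more direct.
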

We prove Proposition \ref{Sampl:prop:rho-exch} in Section \ref{sec:proof:exch}.
\begin{rem}
\label{Sampl:rem:Xi-coal}
For $t\in\R_+$, let $(\Pi^{(t)}_s,s\in[0,t])$ be the $\p$-valued stochastic process such that two integers $i,j\in\N$ are in the same block of $\Pi^{(t)}_s$ if and only if $\rho_t(i,j)\leq 2s$.
Then a comparison of the Poisson process construction of the $\Xi$-coalescent in \cite{Schw00}*{Section 3} with the Poisson process construction from the present section shows that a $\Xi$-coalescent up to time $t$ is given by the process $(\Pi^{(t)}_s,s\in[0,t))$.
The distance matrix $\rho_t\wedge (2t)$ can be retrieved from $(\Pi^{(t)}_s,s\in[0,t))$ by
\begin{equation*}
\rho_t(i,j)\wedge(2t)=
2\inf\{s\in[0,t]:\text{ $i$ and $j$ are in the same block of $\Pi^{(t)}_s$, or $s=t$}\}
\end{equation*}
As $\Xi$-coalescents are exchangeable, it follows that the random variable $(\rho_t(i,j)\wedge(2t))_{i,j\in\N}$ is exchangeable.
We remark that the collection of partitions $(\Pi^{(t)}_{(t-s)-},0\leq s\leq t)$ is the dual flow of partitions from Foucart \cite{Foucart12} in one-sided time.
We also remark that preservation of exchangeability
in the lookdown model is studied in e.\,g.\ \cites{DK96,DK99,BBMST09}.
\end{rem}

For the application of Theorem~\ref{Sampl:thm:tv}, it is also of interest whether the states $\rho_t$ are a.\,s.\ dust-free.
Proposition~\ref{Sampl:prop:rho-df} formulates the criterion from \cite{Schw00}*{Proposition 30} in our present context. We call the finite measure $\Xi$ on $\Delta$ dust-free if
\begin{equation}
\label{Sampl:eq:dust-free}
\Xi\{0\}>0\quad\text{or}\quad\int\left|x\right|_1\left|x\right|_2^{-2}\Xi_0(dx)=\infty.
\end{equation}
\begin{prop}
\label{Sampl:prop:rho-df}
Let $t\in(0,\infty)$ and assume $\rho_0\in\Uu$. Then $\Xi$ is dust-free if and only if $\rho_t$ is a.\,s.\ dust-free.
\end{prop}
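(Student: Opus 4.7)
The plan is to reduce the statement to the classical dust-freeness criterion for $\Xi$-coalescents, which is \cite{Schw00}*{Proposition 30}. The key observation comes from Remark~\ref{Sampl:rem:Xi-coal}: the process $(\Pi^{(t)}_s, s\in[0,t))$ defined by ``$i$ and $j$ lie in the same block of $\Pi^{(t)}_s$ iff $\rho_t(i,j)\leq 2s$'' is a $\Xi$-coalescent on $[0,t)$, starting from the partition into singletons at $s=0$. From the definition of $\Upsilon$ in Section~\ref{Sampl:sec:dist-dec}, for each $i\in\N$,
\[
\Upsilon(\rho_t)(i)=\tfrac{1}{2}\inf_{j\neq i}\rho_t(i,j)=\inf\{s\geq 0 : i\text{ lies in a non-singleton block of }\Pi^{(t)}_s\}.
\]
Therefore $\rho_t$ is dust-free (i.e.\ $\Upsilon(\rho_t)=0$) if and only if for each $i\in\N$, $i$ lies in a non-singleton block of $\Pi^{(t)}_s$ for every $s>0$, i.e.\ if and only if the $\Xi$-coalescent has no integer in the dust at any positive time.

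By the exchangeability of the $\Xi$-coalescent starting from singletons, it suffices to check the condition for the integer $1$. The first time $\tau_1$ at which $1$ leaves its singleton block is the first arrival of a Poisson process with intensity
\[
\lambda:=H_\Xi\bigl(\{\pi\in\p:1\text{ lies in a non-singleton block of }\pi\}\bigr).
\]
From the definition of $H_\Xi$ after \reff{Sampl:eq:dec-Xi}, the Kingman atom $\Xi\{0\}\sum_{i<j}\delta_{K_{i,j}}$ contributes $\Xi\{0\}\cdot\#(\N\setminus\{1\})$ to $\lambda$, which is $+\infty$ if $\Xi\{0\}>0$ and $0$ otherwise. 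The $\Xi_0$-part contributes $\int\kappa(x,\{1\text{ in non-singleton}\})|x|_2^{-2}\Xi_0(dx)$; by Kingman's paintbox, the probability that $1$ lands in a non-singleton block under $\kappa(x,\cdot)$ equals $|x|_1$ (given that $U_1$ falls in a subinterval of length $x_k>0$, almost surely some $U_j$ with $j\geq 2$ falls in the same subinterval). Hence $\lambda$ is finite if and only if $\Xi\{0\}=0$ and $\int|x|_1|x|_2^{-2}\Xi_0(dx)<\infty$, i.e.\ if and only if \reff{Sampl:eq:dust-free} fails.

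To conclude: if $\lambda<\infty$, then $\tau_1$ is $\mathrm{Exp}(\lambda)$-distributed and hence strictly positive a.s., so $\rho_t$ is not dust-free; if $\lambda=\infty$, then a standard truncation argument (decomposing $\{|x|_2\in[2^{-k-1},2^{-k})\}$, so that the restricted Poisson measure has finite intensity, and noting that the total count of non-singleton events involving $1$ in $(0,\delta)$ has infinite mean for every $\delta>0$) gives $\tau_1=0$ a.s., and then by exchangeability and countable intersection $\Upsilon(\rho_t)=0$ a.s. The role of the assumption $\rho_0\in\Uu$ is only to ensure that $\rho_t\in\Uu$ (Remark~\ref{Sampl:rem:rho-um}) so that $\Upsilon$ makes sense; the value of $\rho_0$ is irrelevant because dust-freeness only reflects the behaviour of $\rho_t$ at arbitrarily small values, which depends only on reproduction events in the immediate past of $t$.

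The only real obstacle is the rigorous handling of the case $\lambda=\infty$, which is standard and is precisely the content of \cite{Schw00}*{Proposition 30}; given the translation above, one can alternatively quote that result directly and be done.
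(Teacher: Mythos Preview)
Your proposal is correct and follows essentially the same route as the paper: translate dust-freeness of $\rho_t$ into the statement that $\Pi^{(t)}_s$ has no singleton blocks for all $s\in(0,t)$, and then invoke \cite{Schw00}*{Proposition 30}. The paper's proof does exactly this in two sentences, simply citing Schweinsberg without re-deriving the rate computation; your longer version unpacks the content of that citation (the computation of $\lambda$ and the exponential waiting-time dichotomy), which is fine but not needed once you have decided to quote the reference.
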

\begin{proof}
By Remark~\ref{Sampl:rem:rho-um}, $\rho_t\in\Uu$, hence $\Upsilon(\rho)$ is well-defined. Clearly, $\rho_t$ is dust-free if and only if the partition $\Pi^{(t)}_s$ from Remark~\ref{Sampl:rem:Xi-coal} contains no singletons for all $s\in(0,t)\cap\Q$. This holds a.\,s.\ if and only if $\Xi$ is dust-free by \cite{Schw00}*{Proposition 30}.
\end{proof}

\section{Decomposition of the genealogical distances}
\label{Sampl:sec:dec-proc}
To apply Theorem~\ref{Sampl:thm:tv}\ref{Sampl:item:thm:tv:mmm} to the process $(\rho_t,t\in\R_+)$ from Section \ref{Sampl:sec:Xi-ld}, we need to describe the $\hat\Uu$-valued process $(\beta(\rho_t),t\in\R_+)$ by a martingale problem.
A version of this process that readily yields a description by a martingale problem is read off from the lookdown model in this section.
We define such a process in Subsection~\ref{Sampl:sec:dec-det} for a deterministic point measure $\eta$ that drives the population model. In Subsection~\ref{Sampl:sec:dec-stoch}, we let $\eta$ again be the Poisson random measure.

\subsection{The deterministic construction}
\label{Sampl:sec:dec-det}
Let $\eta$ be a simple point measure on $(0,\infty)\times\p$ (as in Section \ref{sec:det-ld-gen}).
Let $(r_0,v_0)\in\R^{\N^2}\times\R^\N$. We interpret $(r_0,v_0)$ as a decomposition of genealogical distances at time $0$. For $i\in\N$, let
\[\p(i)=\{\pi\in\p:\{i\}\notin \pi\}\]
be the set of partitions of $\N$ in which $i$ does not form a singleton block. If $\eta(\{s\}\times\p(A_s(t,i)))>0$ for some $s\in(0,t]$, then we set
\[v_t(i)=t-\sup\{s\in(0,t]: \eta(\{s\}\times\p(A_s(t,i)))>0\},\]
else we set
\[v_t(i)=t+v_0(A_0(t,i)).\]
The quantity $v_t(i)$ is the time back until an ancestor of the particle on level $i$ at time $t$ is involved in a reproduction event in which it belongs to a non-singleton block, if there is such an event, else $v_t(i)$ is defined from $v_0$.

We let $\rho_0=\alpha(r_0,v_0)$ and define the process $(\rho_t,t\in\R_+)$ from $\eta$ and $\rho_0$ as in Subsection \ref{sec:det-ld-gen}. We set
\[\label{Sampl:not:rtvt}r_t(i,j)=(\rho_t(i,j)-v_t(i)-v_t(j))\I{i\neq j}\]
for $t\in\R_+$ and $i,j\in\N$.
Then $(r_t,v_t)$ can be thought of as a decomposition of the distance matrix $\rho_t$ in the sense of Section~\ref{Sampl:sec:dist-dec}. In this decomposition, we remove from the genealogical tree at time $t$ the part between any leaf $i$ and the most recent reproduction event on the ancestral lineage of this leaf, and we encode the length of this part as the mark $v_t(i)$.
\begin{rem}
Consider for this remark the following change (compared to our definition from Section \ref{sec:det-ld-gen}) in the definition of the reproduction event encoded by a point $(t,\pi)\in\eta$: For each non-singleton block $B_i(\pi)$, the reproducing particle on level $i$ at time $t-$ dies and is replaced at time $t$ by its offspring on all the levels in $B_i(\pi)$. Then the quantity $v_t(i)$ is the age of the particle on level $i$ at time $t$ if this holds for $t=0$.
Condition~\reff{Sampl:eq:ass-eta-hat} below ensures that the times at which the particles on a fixed level are replaced do not accumulate.
\end{rem}

Analogously to Section~\ref{sec:det-ld-gen}, we give another description of the process $((r_t,v_t),t\in\R_+)$.
Let $\S_n$\label{Sampl:not:Sn} be the set of semi-partitions of $[n]$, that is, the set of systems of nonempty disjoint subsets of $[n]$. Every partition is also a semi-partition. However, in a semi-partition, there can be missing elements, that is, elements of $[n]$ that are not contained in the union $\cup\sigma$ of the blocks of $\sigma$. By ``blocks'' we mean the subsets of $[n]$ that are the elements of $\sigma$. From every semi-partition $\sigma\in\S_n$, a partition $\pi$ is obtained by inserting a singleton block for each missing element. We call $\pi$ the partition associated with $\sigma$, and we define $\sigma(i)=\pi(i)$ for each $i\in[n]$, where $\pi(i)$ is defined in Section~\ref{sec:det-ld-gen}.
In order that equation~\reff{Sampl:eq:jumps-sigma-rvt} below hold,
we associate with each element $\sigma$ of $\S_n$ a transformation $\R^{n^2}\times\R^n\to\R^{n^2}\times\R^n$,\label{Sampl:not:Sn-transf} which we also denote by $\sigma$, by
$\sigma(r,v)=(r',v')$, where
\[v'(i)=v(\sigma(i))\I{i\notin\cup\sigma}\]
and
\[r'(i,j)=\left(v(\sigma(i))\I{i\in\cup\sigma}
+r(\sigma(i),\sigma(j))+v(\sigma(j))\I{j\in\cup\sigma}\right)\I{i\neq j}\]
for $i,j\in[n]$.

We define the function
\[\varsigma_n:\p\to\S_n, \quad \pi\mapsto
\{B\cap[n]: B\in\pi, \#B\geq 2\}\setminus\{\emptyset\}.\]
that removes all singleton blocks from a partition of $\N$ and restricts the semi-partition obtained in this way to a semi-partition of $[n]$.
For each reproduction event encoded by a point $(s,\pi)\in\eta$, the corresponding jump of the process $((r_t,v_t),t\in\R_+)$ can be described by
\begin{equation}
\label{Sampl:eq:jumps-sigma-rvt}
\varsigma_n(\pi)(\gamma_n(r_{s-},v_{s-}))=\gamma_n(r_s,v_s).
\end{equation}
Here we cannot use the restriction $\gamma_n(\pi)$ (of $\pi$ to $[n]$) instead of $\varsigma_n(\pi)$ as we cannot read off from $\gamma_n(\pi)$ which singleton blocks in $\gamma_n(\pi)$ are also singleton blocks in $\pi$.

We define the set of partitions
\[\label{Sampl:not:hatp^n}\hat\p^n=\{\pi\in\p:\varsigma_n(\pi)\neq\emptyset\}.\]
We remark that $\hat\p^n$ is the set of partitions of $\N$ in which not all of the first $n$ integers form singleton blocks, hence it is strictly larger than the set $\p^n$. 
Only reproduction events that are encoded by a partition in $\hat\p^n$ affect the decomposed genealogical distances on the first $n$ levels $(\gamma_n(r_t,v_t),t\in\R_+)$. If $\eta$ satisfies the condition
\begin{equation}
\label{Sampl:eq:ass-eta-hat}
\eta((0,t]\times\hat\p^n)<\infty\quad\text{for all }t\in(0,\infty)\text{ and }n\in\N.
\end{equation}
then there are only finitely many reproduction events in bounded time intervals that result in a jump of the process $(\gamma_n(r_t,v_t),t\in\R_+)$. Between such jumps,
the matrix $r_t$ is constant, and the entries of the vector $v_t$ grow linearly with slope $1$, that is, $v_t(i)+s=v_{t+s}(i)$ for $i\in[n]$ and $t,s\in\R_+$ with $\eta((t,t+s]\times\hat\p^n)=0$.

\subsection{Stochastic evolution}
\label{Sampl:sec:dec-stoch}
Now let $\eta$ be the Poisson random measure from Section~\ref{Sampl:sec:Xi-ld} whose distribution is characterized by some finite measure $\Xi$ on $\Delta$. Consider the population model from Subsection~\ref{Sampl:sec:dec-det} driven by the Poisson random measure $\eta$.
For each $n\in\N$ and $\sigma\in\S_n\setminus\{\emptyset\}$, the rate at which reproduction events encoded by a partition in $\varsigma_n^{-1}(\sigma)\in\p$ occur is given by
\begin{align}
\lambda_{n,\sigma} = & H_\Xi(\varsigma_n^{-1}(\sigma))\notag\\
= & \int_\Delta\kappa(x,\varsigma_n^{-1}(\sigma))\left| x\right|_2^{-2}\Xi_0(dx)
+\Xi\{0\}\sum_{1\leq i < j}\I{K_{i,j}\in\varsigma_n^{-1}(\sigma)}\notag\\
= & \int_\Delta \sum_{\substack{i_1,\ldots,i_\ell\in\N\\ \text{ pairwise distinct}}}
x_{i_1}^{k_1}\cdots x_{i_\ell}^{k_\ell}(1-\left|x\right|_1)^{n-k_1-\ldots-k_\ell}|x|_2^{-2}\Xi_0(dx)\notag\\
& + \Xi\{0\}\I{\ell=1,k_1=2}+\infty\I{\Xi\{0\}>0,\ell=1,k_1=1}\label{Sampl:eq:lambda-S}
\end{align}
where $\ell=\#\sigma$, and $k_1,\ldots,k_\ell\geq 1$ are the sizes of the subsets in $\sigma$ in arbitrary order, and $\Xi_0$ is defined as in~\reff{Sampl:eq:dec-Xi}. For the last equality, we consider the paintbox partition $\pi$ associated with $x\in\Delta$: With the notation from the beginning of Section~\ref{Sampl:sec:Xi-ld}, integers $i,j\in[n]$ are elements of a common subset in $\varsigma_n(\pi)$ if and only if $U_i$ and $U_j$ fall into a common subinterval that is not the dust interval. In particular, $i\notin\cup\varsigma_n(\pi)$ if and only if $U_i$ falls into the dust interval.

Note that the rates $\lambda_\pi$ for $\pi\in\p_n\setminus\{\mathbf{0}_n\}$, which we discussed already in Remark \ref{Sampl:rem:rates-Schw}, satisfy
\begin{equation}
\label{Sampl:eq:lambda-P-S}
\lambda_\pi=H_\Xi(\gamma_n^{-1}(\pi))=H_\Xi(\bigcup_\sigma\{\varsigma_n^{-1}(\sigma)\})=\sum_\sigma\lambda_{n,\sigma},
\end{equation}
where the union and the sum are over all semi-partitions $\sigma\in\S_n$ with the same non-singleton blocks as $\pi$. In \reff{Sampl:eq:lambda-P-S}, we also use the restriction map $\gamma_n:\p\to\p_n$.
From equations~\reff{Sampl:eq:lambda-S} and \reff{Sampl:eq:lambda-P-S}, we see that $\lambda_{\{\{1,2\}\}}=\Xi(\Delta)<\infty$ and $\lambda_\pi<\infty$ for all $\pi\in\p_n\setminus\{\mathbf{0}_n\}$, where $\mathbf{0}_n=\{\{1\},\ldots,\{n\}\}$. This implies $\eta((0,t]\times\p^n)<\infty$ a.\,s.\ for all $t\in(0,\infty)$. That is, condition~\reff{Sampl:eq:ass-eta} is a.\,s.\ satisfied, as stated in Section~\ref{Sampl:sec:Xi-ld}.
The condition~\reff{Sampl:eq:dust-free} for $\Xi$ to be dust-free is the condition that $\lambda_{1,\{\{1\}\}}=\infty$.
That is, each particle reproduces with infinite rate if and only if $\Xi$ is dust-free.
Hence, if $\Xi$ is not dust-free, then almost every realization of $\eta$ satisfies condition~\reff{Sampl:eq:ass-eta-hat}.
Moreover, if $\Xi$ is not dust-free, then $\lambda_{n,\sigma}<\infty$ for all $n\in\N$ and $\sigma\in\S_n\setminus\{\emptyset\}$ as a consequence of equation~\reff{Sampl:eq:lambda-S}.
\begin{rem}
Consider the case that $\Xi$ is concentrated on $\{(x,0,0,\ldots):x\in[0,1]\}\subset\Delta$. In this case, which corresponds to the $\Lambda$-coalescent, a.\,s.\ no simultaneous multiple reproduction events occur. The measure $\Xi_0$ is then determined by the finite measure $\Lambda_0=\varpi(\Xi_0)$, where $\varpi:\Delta\to[0,1]$, $x\mapsto x_1$. For $B\subset[n]$ and $k=\#B$, it then follows
\[\lambda_{n,\{B\}}=\int_{[0,1]}x^k(1-x)^{n-k}x^{-2}\Lambda_0(dx)+\Xi\{0\}\I{k=2}+\infty\I{\Xi\{0\}>0,k=1}.\]
The rates $\lambda_{n,\sigma}$ for $\sigma\in\S_n$ with $\#\sigma>1$ are equal to zero in this case. 
\end{rem}

Now we consider the $\R^{\N^2}\times\R^{\N}$-valued process $((r_t,v_t),t\in\R_+)$ from Subsection~\ref{Sampl:sec:dec-det}, driven by the Poisson random measure $\eta$. The initial state is defined as a $\R^{\N^2}\times\R^\N$-valued random variable $(r_0,v_0)$ that is independent of $\eta$.
\begin{prop}
\label{prop:rv-Markov}
The process $((r_t,v_t),t\in\R_+)$ is Markov.
\end{prop}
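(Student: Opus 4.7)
The plan is to mimic the proof of Proposition \ref{prop:rho-Markov} essentially verbatim, replacing the description \reff{Sampl:eq:jumps-pi-rhot} by the description \reff{Sampl:eq:jumps-sigma-rvt} of the jumps of $((r_t,v_t),t\in\R_+)$. The key structural input is that, by the non-crossing property of the trajectories (Remark~\ref{Sampl:rem:non-crossing}), the ancestor at time $t$ of the particle on level $i\in[n]$ at a later time $t'$ occupies a level $A_t(t',i)\le i\le n$. Consequently, for fixed $n\in\N$ and $0\leq t<t'$, the restriction $\gamma_n(r_{t'},v_{t'})$ is a deterministic function of $\gamma_n(r_t,v_t)$ together with the restriction of $\eta$ to $(t,t']\times\p$: between reproduction events the coordinates of $v$ grow linearly with slope $1$ while the matrix $r$ stays constant, and at each point $(s,\pi)\in\eta\cap((t,t']\times\hat\p^n)$ the semi-partition $\varsigma_n(\pi)$ acts on $\gamma_n(r_{s-},v_{s-})$ according to \reff{Sampl:eq:jumps-sigma-rvt}. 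Reproduction events encoded by partitions in $\p\setminus\hat\p^n$ do not affect $\gamma_n(r_s,v_s)$. Note that we never have to "look back" beyond time $t$, precisely because of non-crossing: the only way $v_{t'}(i)$ can fail to be determined by $\eta|_{(t,t']}$ is if no ancestor of the particle on level $i$ at time $t'$ was involved in a non-singleton reproduction event during $(t,t']$, and in that case $v_{t'}(i) = (t'-t)+v_t(A_t(t',i))$ with $A_t(t',i)\le n$, so the required information is contained in $\gamma_n(r_t,v_t)$.

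From this deterministic dependence, together with the independence of $(r_0,v_0)$ from $\eta$, I conclude that for every bounded measurable $f\colon\R^{n^2}\times\R^n\to\R$,
\[\E[f(\gamma_n(r_{t'},v_{t'}))\mid ((r_s,v_s))_{s\le t}]\]
is measurable with respect to $\gamma_n(r_t,v_t)$ and the restriction of $\eta$ to $(t,t']\times\p$. Since restrictions of a Poisson random measure to disjoint subsets of $(0,\infty)\times\p$ are independent, this conditional expectation coincides a.\,s.\ with $\E[f(\gamma_n(r_{t'},v_{t'}))\mid (r_t,v_t)]$. As $n\in\N$ was arbitrary and the finite restrictions $\gamma_n$ determine the Borel structure on $\R^{\N^2}\times\R^\N$, the Markov property of $((r_t,v_t),t\in\R_+)$ follows.

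There is no serious obstacle; the only mildly subtle point is checking that the non-crossing property guarantees that $\gamma_n(r_{t'},v_{t'})$ really does not require information about coordinates of $(r_t,v_t)$ outside the first $n$ levels, which is exactly the argument above. The second (more artificial) point to watch is that $\varsigma_n(\pi)$, rather than $\gamma_n(\pi)$, is what governs the jumps: singleton blocks in the restriction that actually correspond to singleton blocks in $\pi$ must be detected, but this distinction is already built into the deterministic construction of Subsection~\ref{Sampl:sec:dec-det} and does not affect the measurability argument.
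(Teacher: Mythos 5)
Your proposal is correct and follows the same route as the paper, which simply invokes the argument used for Proposition \ref{prop:rho-Markov}: the jump description \reff{Sampl:eq:jumps-sigma-rvt} shows that $\gamma_n(r_{t'},v_{t'})$ is determined by $\gamma_n(r_t,v_t)$ and $\eta$ restricted to $(t,t']\times\p$, and independence of the restrictions of the Poisson random measure to disjoint sets gives the Markov property. Your additional remarks on the non-crossing property and on $\varsigma_n(\pi)$ versus $\gamma_n(\pi)$ just make explicit what the paper leaves implicit.
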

\begin{proof}
This follows by the same argument as for Proposition \ref{prop:rho-Markov}.
\end{proof}
Recall the set $\hat\C$ from Section~\ref{Sampl:sec:tv}. For $(r,v)\in\R^{\N^2}\times\R^\N$ and $\phi\in\hat\C_n$, we write
\[\langle \nabla^{v} \phi, \underline 1 \rangle (r,v) = \sum_{i\in\N} \frac{\partial}{\partial v(i)}\phi(r,v).\]
From the discussion above and the description of the process $(\gamma_n(r_t,v_t),t\in\R_+)$ around equation \reff{Sampl:eq:jumps-sigma-rvt}, we deduce the next proposition.
\begin{prop}
\label{Sampl:prop:mp-R}
Assume that $\Xi$ is not dust-free.
Define an operator $\hat A=\hat A_{\rm grow}+\hat A_{\rm repr}$ with domain $\hat\C$ by
\[\hat A_{\rm grow}\phi(r,v)=\langle \nabla^{v} \phi, \underline{1}\rangle(r,v)\]
and
\[\hat A_{\rm repr}\phi(r,v)
=\sum_{\sigma\in\S_{n}\setminus\{\emptyset\}}\lambda_{n,\sigma}(\phi(\sigma(\gamma_n(r,v)))-\phi(r,v))\]
for $n\in\N$, $\phi\in\hat\C_n$ and $(r,v)\in\R^{\N^2}\times\R^\N$.
Then the stochastic process
$((r_t,v_t),t\in\R_+)$ solves the martingale problem $(\hat A,\hat\C)$.
\end{prop}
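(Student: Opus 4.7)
The plan is to verify the martingale problem directly from the jump-and-drift description of $((r_t,v_t),t\in\R_+)$ obtained in Subsection \ref{Sampl:sec:dec-det}. Fix $n\in\N$ and $\phi\in\hat\C_n$; since $\phi$ depends only on $\gamma_n(r,v)$, it suffices to analyze the restricted process $(\gamma_n(r_t,v_t),t\in\R_+)$. The assumption that $\Xi$ is not dust-free guarantees, via \reff{Sampl:eq:lambda-S} and the discussion around \reff{Sampl:eq:lambda-P-S}, that $\lambda_{n,\sigma}<\infty$ for every $\sigma\in\S_n\setminus\{\emptyset\}$ and that $\eta$ satisfies \reff{Sampl:eq:ass-eta-hat} almost surely, so that the restricted process makes only finitely many jumps in any bounded time interval.

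Between the (almost surely locally finite) jump times driven by points of $\eta$ lying in $(0,\infty)\times\hat\p^n$, the matrix part $r_t$ is constant and each coordinate of $v_t$ grows linearly with slope $1$. Hence the fundamental theorem of calculus, applied coordinate-wise and using the boundedness and uniform continuity of the derivative of $\phi$, gives between consecutive jump times the identity $d\phi(r_t,v_t)/dt=\langle\nabla^v\phi,\underline 1\rangle(r_t,v_t)=\hat A_{\rm grow}\phi(r_t,v_t)$. Combining this with the jumps \reff{Sampl:eq:jumps-sigma-rvt}, I get the pathwise decomposition
\[
\phi(r_t,v_t)-\phi(r_0,v_0)
=\int_0^t \hat A_{\rm grow}\phi(r_s,v_s)\,ds
+\int_{(0,t]\times\hat\p^n}\bigl[\phi(\varsigma_n(\pi)(\gamma_n(r_{s-},v_{s-})))-\phi(r_{s-},v_{s-})\bigr]\,\eta(ds,d\pi).
\]

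The next step is to replace the integral against $\eta$ by its compensator $dt\,H_\Xi(d\pi)$. Because $\phi$ is bounded and $H_\Xi(\hat\p^n)=\sum_{\sigma\in\S_n\setminus\{\emptyset\}}\lambda_{n,\sigma}<\infty$, the integrand is uniformly bounded on $\hat\p^n$, so the standard compensation formula for Poisson random measures shows that the compensated integral is a martingale with respect to the natural filtration of $\eta$, and hence also with respect to the filtration induced by $((r_t,v_t),t\in\R_+)$. Partitioning $\hat\p^n=\bigcup_{\sigma\in\S_n\setminus\{\emptyset\}}\varsigma_n^{-1}(\sigma)$ and using $H_\Xi(\varsigma_n^{-1}(\sigma))=\lambda_{n,\sigma}$, the compensator evaluates to $\int_0^t\hat A_{\rm repr}\phi(r_s,v_s)\,ds$, giving the desired martingale $\phi(r_t,v_t)-\phi(r_0,v_0)-\int_0^t \hat A\phi(r_s,v_s)\,ds$.

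The only bookkeeping step that needs a little care is the identification of the compensator with $\hat A_{\rm repr}\phi$; this rests on the already-computed rates \reff{Sampl:eq:lambda-S} together with the observation that if $\pi\in\p\setminus\hat\p^n$ then $\varsigma_n(\pi)=\emptyset$ acts as the identity on $\R^{n^2}\times\R^n$, so restricting the Poisson integral to $\hat\p^n$ loses nothing. Progressive measurability of $((r_t,v_t),t\in\R_+)$ follows from the c\`adl\`ag construction in Subsection \ref{Sampl:sec:dec-det}. The main obstacle in a fully rigorous write-up is simply making sure that the accumulation of jump contributions and the interchange of sum and integral are justified uniformly in $n$; under the non-dust-free hypothesis this is routine because the total jump rate on $[n]$ is finite.
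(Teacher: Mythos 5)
Your argument is correct and is precisely the verification the paper sketches and then omits: it deduces the proposition from the description of $(\gamma_n(r_t,v_t),t\in\R_+)$ around \reff{Sampl:eq:jumps-sigma-rvt} (linear growth of the marks between jumps, jumps by $\varsigma_n(\pi)$ at the points of $\eta$), with the non-dust-free assumption giving $\lambda_{n,\sigma}<\infty$ and condition \reff{Sampl:eq:ass-eta-hat} a.s. Your compensator computation for the Poisson random measure, using $H_\Xi(\hat\p^n)=\sum_{\sigma\in\S_n\setminus\{\emptyset\}}\lambda_{n,\sigma}<\infty$ and boundedness of $\phi$, supplies exactly the routine formal proof the paper chose not to write out.
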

The operator $\hat A_{\rm grow}$ accounts for the growth of the marks $v_t$ which is described in the end of Subsection~\ref{Sampl:sec:dec-det}. The operator $\hat A_{\rm repr}$ stands for the jumps of the decomposed genealogical distances in reproduction events which are described by equation~\reff{Sampl:eq:jumps-sigma-rvt}. We omit a formal proof of Proposition \ref{Sampl:prop:mp-R}.

Finally, we consider again the process $(\rho_t,t\in\R_+)$ which is defined from the Poisson random measure $\eta$ and the initial state $\rho_0=\alpha(r_0,v_0)$ as in Section \ref{sec:det-ld-gen}. We assume $\rho_0\in\Uu$. Then $\rho_t\in\Uu$ for all $t\in\R_+$ by Remark~\ref{Sampl:rem:rho-um}. Moreover, the construction in Subsection~\ref{Sampl:sec:dec-det} and the definition of the map $\alpha$ in Section~\ref{Sampl:sec:dist-dec} yield $\rho_t=\alpha(r_t,v_t)$ and $(r_t,v_t)\in\hat\Uu$ for all $t\in\R_+$. 
We further assume that
\begin{equation}
\label{Sampl:eq:ass-r0v0}
(r_0,v_0)=\beta(\rho_0).
\end{equation}
Then by the following proposition, the decomposition $(r_t,v_t)$ of the semi-ultrametric $\rho_t$ is the one given by the map $\beta$ from Section~\ref{Sampl:sec:dist-dec}, namely the decomposition into the external branches and the remaining subtree.
\begin{prop}
\label{Sampl:prop:rv-R}
Assumption~\reff{Sampl:eq:ass-r0v0} implies $(r_t,v_t)=\beta(\rho_t)$ a.\,s.\ for each $t\in\R_+$.
\end{prop}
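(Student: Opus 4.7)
The plan is to reduce the identity $(r_t,v_t)=\beta(\rho_t)$ to the single assertion $v_t=\Upsilon(\rho_t)$ a.\,s., and then to verify this by matching the dynamics of the two sides. For the reduction, the relation $\alpha(r_t,v_t)=\rho_t$ stated just before the proposition, together with $(r_t,v_t)\in\hat\Uu$, forces $r_t(i,j)=(\rho_t(i,j)-v_t(i)-v_t(j))\I{i\neq j}$, which coincides with the $r$-component of $\beta(\rho_t)$ as soon as the $v$-components agree. Assumption \reff{Sampl:eq:ass-r0v0} gives $v_0=\Upsilon(\rho_0)$, supplying the base case.

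To carry the equality forward in time, I would compare the two processes piece by piece. On any subinterval free of reproduction events, $\rho_t(i,j)$ grows with slope $2$ off the diagonal, so $\Upsilon(\rho_t)(i)$ grows with slope $1$, matching the growth of $v_t(i)$ described in Subsection \ref{Sampl:sec:dec-det}. At a reproduction event $(s,\pi)$ there are two sub-cases. If $i$ lies in a non-singleton block of $\pi$, then $v_s(i)=0$ by \reff{Sampl:eq:jumps-sigma-rvt}, while any sibling $j\in B_{\pi(i)}(\pi)\setminus\{i\}$ gives $\rho_s(i,j)=\rho_{s-}(\pi(i),\pi(i))=0$ and hence $\Upsilon(\rho_s)(i)=0$. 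If $i$ is a singleton of $\pi$, then $\pi$ is almost surely rich enough that $\{\pi(j):j\neq i\}=\N\setminus\{\pi(i)\}$, so $\Upsilon(\rho_s)(i)=\tfrac{1}{2}\inf_{k\neq\pi(i)}\rho_{s-}(\pi(i),k)=\Upsilon(\rho_{s-})(\pi(i))$, which matches $v_s(i)=v_{s-}(\pi(i))$ by the inductive equality.

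To patch these local matches into the global assertion at a fixed $t$, I would split on dust-freeness. In the non-dust-free case, the computation \reff{Sampl:eq:lambda-S} yields $H_\Xi(\hat\p^n)<\infty$, so events affecting the first $n$ levels are locally finite in $(0,t]$ a.\,s., and an induction over them extends $v=\Upsilon(\rho)$ from time $0$ to time $t$ for every $i\in[n]$; letting $n\to\infty$ finishes. In the dust-free case, Proposition \ref{Sampl:prop:rho-df} directly gives $\Upsilon(\rho_t)=0$ a.\,s., and since $\kappa(x,\p(k))=|x|_1$ combined with infinitely many Kingman atoms $K_{k,\cdot}$ makes $H_\Xi(\p(k))=\infty$ for every $k$, the events that put the ancestor of $i$ in a non-singleton block accumulate at $t$, forcing the supremum in the definition of $v_t(i)$ to equal $t$ and hence $v_t(i)=0$. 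The most delicate step will be the singleton-$i$ sub-case of the jump analysis: the needed identity $\{\pi(j):j\neq i\}=\N\setminus\{\pi(i)\}$ demands that $\pi$ have infinitely many blocks, which holds a.\,s.\ because singletons in the paintbox $\kappa(x,\cdot)$ arise only from the dust interval (forcing $|x|_1<1$ and thus infinitely many singletons in $\pi$), while Kingman atoms $K_{k,k'}$ also have infinitely many blocks by construction.
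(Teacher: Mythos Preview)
Your reduction to $v_t=\Upsilon(\rho_t)$ and the dust-free half are fine; the jump analysis at a single event is also correct. The gap is in the non-dust-free induction. You write ``on any subinterval free of reproduction events, $\Upsilon(\rho_t)(i)$ grows with slope~$1$'' and then induct over the finitely many $\hat\p^n$-events. But reproduction events are in general dense in time even when $\Xi$ is not dust-free: for $\Lambda_0(dx)=x\,dx$ one has $\int_0^1 x\cdot x^{-2}\,\Lambda_0(dx)=1<\infty$ (dust) while $H_\Xi(\p)=\int_0^1 x^{-2}\,\Lambda_0(dx)=\infty$. So between two consecutive $\hat\p^n$-events $s_1<s_2$ there are typically infinitely many events in $\p\setminus\hat\p^n$, and your linear-growth claim does not follow from ``no events''. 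What you would actually need on $(s_1,s_2]$ is
\[
\Upsilon(\rho_{s_2})(i)=\tfrac12\inf_{j\neq i}\bigl(2(s_2-s_1)+\rho_{s_1}(i,A_{s_1}(s_2,j))\bigr)=(s_2-s_1)+\Upsilon(\rho_{s_1})(i),
\]
and the second equality requires $\{A_{s_1}(s_2,j):j\neq i\}=\N\setminus\{i\}$, i.e.\ that \emph{every} particle present at time $s_1$ survives to time $s_2$. This does not follow from each individual $\pi$ having infinitely many blocks; one needs that the $\Xi$-coalescent $\Pi^{(s_2)}_{s_2-s_1}$ has infinitely many blocks, which holds because in the dust case it has singletons and then Kingman's correspondence forces infinitely many of them. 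That is exactly the survival argument the paper isolates in Lemma~\ref{Sampl:lem:v-rho} and in the treatment of the event $\{v_t(i)\ge t\}$.

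The paper's route is different and more direct: instead of propagating $v=\Upsilon(\rho)$ forward through events, it bounds the two sides against each other statically at the fixed time~$t$. First $v_t(i)\wedge t\le\Upsilon(\rho_t)(i)\wedge t$ because no second particle can share an ancestor with $i$ strictly after time $t-v_t(i)$. For the reverse inequality on $\{v_t(i)<t\}$ it produces a sibling $k$ with $\rho_t(i,k)=2v_t(i)$ (Lemma~\ref{Sampl:lem:v-rho}), and on $\{v_t(i)\ge t\}$ it uses survival of all time-$0$ particles together with the hypothesis $v_0=\Upsilon(\rho_0)$. Both approaches ultimately rest on the same ``infinitely many blocks $\Rightarrow$ all particles survive'' fact, but the paper's two-sided bound makes the dependence explicit; if you insert that survival argument into your between-events step, your inductive proof also goes through.
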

We prove Proposition~\ref{Sampl:prop:rv-R} in Section~\ref{sec:proof:rv-R}.
\begin{cor}
\label{cor:mp-R}
The process $(\beta(\rho_t),t\in\R_+)$ is Markov and solves the martingale problem $(\hat A,\hat\C)$ from Proposition \ref{Sampl:prop:mp-R}.
\end{cor}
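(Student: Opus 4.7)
The plan is to deduce the corollary by transferring the Markov property and the martingale problem from the process $((r_t,v_t),t\in\R_+)$ of Section~\ref{Sampl:sec:dec-stoch} to $(\beta(\rho_t),t\in\R_+)$, using Proposition~\ref{Sampl:prop:rv-R} as the bridge. Since Propositions~\ref{prop:rv-Markov} and~\ref{Sampl:prop:mp-R} already establish that $((r_t,v_t),t\in\R_+)$ is Markov and solves $(\hat A,\hat\C)$, it suffices to show that the two processes have identical finite-dimensional distributions: both claims in the corollary depend only on such distributions (the Markov property being a statement about conditional expectations under the natural filtration, and a solution to the martingale problem being characterized by tower properties involving finitely many time points).

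To obtain equality of finite-dimensional distributions, I would argue as follows. Proposition~\ref{Sampl:prop:rv-R} yields, for each fixed $t\in\R_+$, that $\beta(\rho_t)=(r_t,v_t)$ almost surely. For any finite set of times $t_1<\cdots<t_k$, the intersection of the $k$ corresponding almost-sure events still has probability one, so
\[(\beta(\rho_{t_1}),\ldots,\beta(\rho_{t_k}))=((r_{t_1},v_{t_1}),\ldots,(r_{t_k},v_{t_k}))\quad\text{a.\,s.,}\]
which gives equality of the joint laws at $(t_1,\ldots,t_k)$. Since this holds for every finite tuple, the two $\hat\Uu$-valued processes have the same finite-dimensional distributions.

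The Markov property then transfers immediately: for $s\leq t$ and any bounded measurable $f:\hat\Uu\to\R$, we have
\[\E[f(\beta(\rho_t))\mid(\beta(\rho_u))_{u\leq s}]=\E[f(r_t,v_t)\mid((r_u,v_u))_{u\leq s}]=\E[f(r_t,v_t)\mid(r_s,v_s)]\]
almost surely, where the last equality is Proposition~\ref{prop:rv-Markov}, and the right-hand side equals a measurable function of $\beta(\rho_s)$. Likewise, for $\phi\in\hat\C$ the martingale property of
\[\phi(r_t,v_t)-\int_0^t\hat A\phi(r_s,v_s)\,ds\]
under the filtration of $((r_s,v_s))_{s\leq t}$ provided by Proposition~\ref{Sampl:prop:mp-R} translates directly, via the a.\,s.\ identifications $\phi(r_t,v_t)=\phi(\beta(\rho_t))$ and $\hat A\phi(r_s,v_s)=\hat A\phi(\beta(\rho_s))$, into the corresponding martingale property for $(\beta(\rho_t),t\in\R_+)$ under its natural filtration. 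Progressivity is inherited from $(\rho_t)$ via the measurable map $\beta$.

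The only subtle point, and therefore the main thing to spell out carefully, is that Proposition~\ref{Sampl:prop:rv-R} gives the identification $\beta(\rho_t)=(r_t,v_t)$ only for each fixed $t$ (on a $t$-dependent null event), not simultaneously for all $t\in\R_+$. Accordingly we are obtaining only a version-level statement, consistent with Remark~\ref{rem:version}; this is exactly what is needed, because both the Markov property and the martingale problem depend only on the distributions at countably many times, and the reduction to finite tuples above uses only finitely many of these null events at a time.
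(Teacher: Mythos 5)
Your proposal is correct and follows essentially the same route as the paper, whose proof simply declares the corollary immediate from Propositions~\ref{prop:rv-Markov}, \ref{Sampl:prop:mp-R} and \ref{Sampl:prop:rv-R}; you merely spell out the transfer (fixed-time a.\,s.\ identification, reduction of the Markov and martingale properties to finitely many time points), and correctly flag the only subtlety, namely that Proposition~\ref{Sampl:prop:rv-R} holds on $t$-dependent null events, consistent with Remark~\ref{rem:version}. The paper additionally notes an alternative argument for the Markov property via Proposition~\ref{prop:rho-Markov} and Remark~\ref{rem:Markov-beta}, but that is optional.
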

\begin{proof}
This is immediate from Propositions~\ref{prop:rv-Markov}, \ref{Sampl:prop:mp-R} and \ref{Sampl:prop:rv-R}. The Markov property can alternatively be seen from Proposition \ref{prop:rho-Markov} and Remark \ref{rem:Markov-beta}.
\end{proof}

\section{Tree-valued Fleming-Viot processes}
\label{Sampl:sec:TVFV}
In this section, we apply Theorem~\ref{Sampl:thm:tv} to the process $(\rho_t,t\in\R_+)$ from Section~\ref{Sampl:sec:Xi-ld}.
By Remark \ref{Sampl:rem:rho-um}, we can consider $(\rho_t,t\in\R_+)$ as an $\Uu$-valued process. 
We call all the image processes in Theorem~\ref{Sampl:thm:tv} tree-valued Fleming-Viot processes.
To distinguish them, we also call them
$\U$-, $\hat\U$, and $\UUerg$-valued $\Xi$-Fleming-Viot processes. Proposition \ref{Sampl:prop:mp-unique-mm} below states that the martingale problems for the tree-valued Fleming-Viot processes have unique solutions.

\subsection{Processes with values in the space of metric measure spaces}
\label{Sampl:sec:TVFV-mm}
In this subsection, we consider a finite measure $\Xi$ on $\Delta$ that is dust-free.
Let $\chi\in\U$, and let $(\rho_t,t\in\R_+)$ be the $\Uu$-valued Markov process from Section~\ref{Sampl:sec:Xi-ld} that is defined in terms of $\Xi$ and an initial state $\rho_0$ with distribution $\nu^\chi$.
We define a $\U$-valued $\Xi$-Fleming-Viot process $(\chi_t,t\in\R_+)$ with initial state $\chi\in\U$ by $\chi_t=\psi(\rho_t)$.
As a justification for this name, we note that $\chi_0=\psi(\rho_0)=\chi$ a.\,s.\ by Corollary \ref{cor:repr}\ref{item:cor:repr-nd:det} and Remark \ref{rem:sampling-dust-free}.
By Theorem~\ref{Sampl:thm:tv} and Propositions~\ref{prop:rho-Markov}, \ref{Sampl:prop:mp-rho}, \ref{Sampl:prop:rho-exch}, and \ref{Sampl:prop:rho-df}, the process $(\chi_t,t\in\R_+)$ is Markovian and solves the martingale problem $(B,\Pi)$, where the generator $B$ is defined by $B\Phi(\chi)=\nu^\chi(A\phi)$ for $\phi\in\C$ with associated polynomial $\Phi\in\Pi$, and $\chi\in\U$. Here $A$ is the generator defined in Proposition~\ref{Sampl:prop:mp-rho}. The martingale problem $(B,\Pi)$ is a generalization of the martingale problem in Theorem 1 of Greven, Pfaffelhuber, and Winter \cite{GPW13}.

\subsection{Processes with values in the space of marked metric measure spaces}
\label{Sampl:sec:TVFV-mmm}
Let $\Xi$ be a general finite measure on the simplex $\Delta$. Let $\hat\chi\in\hat\U$, let $\rho_0$ be a $\Uu$-valued random variable with distribution $\alpha(\nu^{\hat\chi})$, and let the $\Uu$-valued Markov process $(\rho_t,t\in\R_+)$ be defined, as in Section~\ref{Sampl:sec:Xi-ld}, from $\Xi$ and the initial state $\rho_0$.
We define a $\hat\U$-valued $\Xi$-Fleming-Viot process $(\hat\chi_t,t\in\R_+)$ with initial state $\chi\in\hat\U$ by $\hat\chi_t=\hat\psi(\beta(\rho_t))$ for $t\in\R_+$. To justify this name, we note that the initial state satisfies $\hat\chi_0=\hat\psi(\beta(\rho_0))=\chi$ a.\,s.\ by Theorem \ref{thm:repr}\ref{item:thm:repr:det}. By Theorem~\ref{Sampl:thm:tv} and Propositions~\ref{prop:rho-Markov} and \ref{Sampl:prop:rho-exch}, the process $(\hat\chi_t,t\in\R_+)$ is Markovian. 

If $\Xi$ is not dust-free, then by Theorem~\ref{Sampl:thm:tv} and Corollary~\ref{cor:mp-R}, the process $(\hat\chi_t,t\in\R_+)$ solves the martingale problem $(\hat B,\hat\Pi)$, where the generator $\hat B$ is defined by $\hat B\Phi(\chi')=\nu^{\chi'}(\hat A\phi)$ for all $\phi\in\hat\C$ with associated marked polynomial $\Phi$, and all $\chi'\in\hat\U$. Here the generator $\hat A$ is defined as in Proposition~\ref{Sampl:prop:mp-R}.

If $\Xi$ is dust-free, then for each $t\in(0,\infty)$ by Remark~\ref{Sampl:rem:tv-df} and Proposition \ref{Sampl:prop:rho-df}, the marked metric measure space $\hat\chi_t$ is a.\,s.\ dust-free, and $\hat\chi_t$ is determined a.\,s.\ by the associated metric measure space $\chi_t$.

\subsection{Processes with values in the space of distance matrix distributions}
\label{Sampl:sec:TVFV-dm}
Let $(\hat\chi_t,t\in\R_+)$ be the process from Section~\ref{Sampl:sec:TVFV-mmm}, where $\Xi$ is a general finite measure on the simplex $\Delta$. We define a $\UUerg$-valued $\Xi$-Fleming-Viot process $(\xi_t,t\in\R_+)$ with initial state $\alpha(\nu^{\hat\chi_0})\in\UUerg$ by $\xi_t=\alpha(\nu^{\hat\chi_t})$.
Again by Theorem~\ref{Sampl:thm:tv} and Propositions~\ref{prop:rho-Markov}, \ref{Sampl:prop:mp-rho} and \ref{Sampl:prop:rho-exch}, it follows that $(\xi_t,t\in\R_+)$ is Markovian and solves the martingale problem $(C,\Cc)$, where the generator $C$ is defined by $C\Psi(\xi)=\xi(A\phi)$ for all $\xi\in\UUerg$, $\phi\in\C$, and $\Psi\in\Cc:\xi'\mapsto\xi'\phi$. Here the generator $A$ is defined as in Proposition~\ref{Sampl:prop:mp-rho}.

\subsection{Well-posedness of the martingale problem}
\begin{prop}
\label{Sampl:prop:mp-unique-mm}
The martingale problems $(B,\Pi)$, $(\hat B,\hat\Pi)$, and $(C,\Cc)$ are well-posed.
\end{prop}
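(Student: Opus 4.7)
The plan is to establish existence and uniqueness separately for each of the three martingale problems.

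Existence is already in hand: Sections~\ref{Sampl:sec:TVFV-mm}--\ref{Sampl:sec:TVFV-dm} construct the processes $(\chi_t)$, $(\hat\chi_t)$, and $(\xi_t)$ from the lookdown model, and Theorem~\ref{Sampl:thm:tv} together with Propositions~\ref{Sampl:prop:mp-rho}, \ref{Sampl:prop:rho-exch}, and Corollary~\ref{cor:mp-R} shows they solve $(B,\Pi)$, $(\hat B,\hat\Pi)$, and $(C,\Cc)$ respectively. So only uniqueness needs work.

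For uniqueness, I would set up a function-valued duality with the $\Xi$-coalescent. Consider $(B,\Pi)$ first. For $\Phi\in\Pi$ associated to $\phi\in\C_n$, the generator acts by $B\Phi(\chi)=\nu^\chi(A\phi)$, and $A=A_{\rm grow}+A_{\rm repr}$ acts on $\phi$ as a deterministic linear drift together with a partition-indexed jump mechanism at the rates $\lambda_\pi$ from \reff{Sampl:eq:lambda-P-S}. I would introduce a Markov chain $(\phi_s)_{s\ge 0}$ on $\C$, started at $\phi$, evolving between jumps by $\partial_s\phi_s=\langle\nabla\phi_s,\underline{\underline 2}\rangle$ and jumping $\phi_s\mapsto\phi_s\circ\pi$ at rate $\lambda_\pi$ for each $\pi\in\p_n\setminus\{\mathbf{0}_n\}$, using the transformations \reff{Sampl:eq:pn-Un}. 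A Dynkin-type computation then yields, for any solution $(\chi_t)$ of $(B,\Pi)$ started at $\chi\in\U$,
\[ E_\chi\bigl[\nu^{\chi_t}\phi\bigr] = E_\phi\bigl[\nu^\chi\phi_t\bigr], \]
so the one-dimensional marginals depend only on $\chi$ and on $t$. Since $\Pi$ is convergence determining on $\U$ by Remark~\ref{rem:conv-det}, this pins down the law of $\chi_t$; the Markov property of any solution of the martingale problem, together with time-homogeneity, promotes this to uniqueness of the finite-dimensional marginals in the standard way (cf.\ \cite{EK86}).

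For $(\hat B,\hat\Pi)$, the assumption that $\Xi$ is not dust-free is exactly what guarantees that the rates $\lambda_{n,\sigma}$ of \reff{Sampl:eq:lambda-S} are finite for every $\sigma\in\S_n\setminus\{\emptyset\}$, so the dual is again well-defined: $\phi_s\in\hat\C$, the drift is now $\partial_s\phi_s=\langle\nabla^v\phi_s,\underline 1\rangle$ accounting for mark growth, and jumps are $\phi_s\mapsto\phi_s\circ\sigma$ at rates $\lambda_{n,\sigma}$, with $\sigma$ acting as in \reff{Sampl:eq:jumps-sigma-rvt}. Marked polynomials being convergence determining on $\hat\U$, the same argument produces uniqueness. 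For $(C,\Cc)$, the generator $C\Psi(\xi)=\xi(A\phi)$ has the same structure as $B$, so the dual constructed for $(B,\Pi)$ directly determines the one-dimensional marginals of any solution; $\Cc$ is convergence determining on $\UUerg$ again by Remark~\ref{rem:conv-det}, and uniqueness follows.

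The main obstacle I anticipate is verifying the duality identity carefully in the marked case $(\hat B,\hat\Pi)$: one must check that the semi-partition operators $\sigma$ (which can also transfer mass between $r$- and $v$-variables via \reff{Sampl:eq:jumps-sigma-rvt}) interact consistently with the mark-growth drift, and that the joint process admits a generator to which Dynkin's formula rigorously applies (in particular controlling the total jump rate, which is where the non-dust-free hypothesis enters). Once the duality identity is established, the passages through the convergence-determining classes $\Pi$, $\hat\Pi$, and $\Cc$ are routine.
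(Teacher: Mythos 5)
Your proposal follows essentially the same route as the paper: existence from the lookdown construction of Section~\ref{Sampl:sec:TVFV}, and uniqueness via a function-valued dual on $\C_n$ that jumps $\phi\mapsto\pi(\phi)$ at rates $\lambda_\pi$ and grows deterministically between jumps, concluding through the separating/convergence-determining classes $\Pi$, $\hat\Pi$, $\Cc$. The paper treats only $(B,\Pi)$ in detail (declaring the other two analogous) and formalizes your ``Dynkin-type computation'' and the passage from one-dimensional to finite-dimensional uniqueness by citing Theorems 4.4.11 and 4.4.2 of \cite{EK86}, which is exactly the standard machinery you invoke.
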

That a martingale problem is well-posed means that a solution exists whose finite-dimensional distributions are uniquely determined by the initial state. A proof of Proposition \ref{Sampl:prop:mp-unique-mm} by duality is given in Section \ref{sec:proof-uniqueness}.

\section{Some semigroup properties}
\label{Sampl:sec:semigroup}
In this section, we state Feller continuity of
tree-valued $\Xi$-Fleming-Viot processes, and that the domains of the martingale problems for them are cores.
We consider $\hat\U$-valued $\Xi$-Fleming-Viot processes in detail, analogous results hold for the other processes from Section~\ref{Sampl:sec:TVFV}.

Let $\Xi$ be a finite measure on the simplex $\Delta$.
For $\chi\in\hat\U$, let $(\hat\chi_t,t\in\R_+)$ under the probability measure $\P_\chi$ with associated expectation $\E_\chi$ be the $\hat\U$-valued $\Xi$-Fleming-Viot process from Section~\ref{Sampl:sec:TVFV-mmm} with initial state $\chi$. We denote by $\Cb(E)$ the set of bounded continuous $\R$-valued functions on a metric space $E$. We endow $\Cb(E)$ with the supremum norm.

The results in this section rely on the following lemma which we prove in Section \ref{sec:proof:Pi-preserve} using the lookdown construction.
\begin{lem}
\label{Sampl:lem:Pi-preserve}
For each $t\in\R_+$ and $\Phi\in\hat\Pi$, the function
$\hat\U\to\R$, $\chi\mapsto\E_\chi[\Phi(\hat\chi_t)]$ is an element of $\hat\Pi$.
\end{lem}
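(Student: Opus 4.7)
The plan is to start from $\Phi \in \hat\Pi$ represented as $\Phi(\chi') = \nu^{\chi'}\phi$ with $\phi \in \hat\C_n$, and to produce an explicit $\tilde\phi \in \hat\C_n$ satisfying
\[\E_\chi[\Phi(\hat\chi_t)] = \nu^\chi\tilde\phi \qquad \text{for all }\chi\in\hat\U,\]
which immediately places $\chi\mapsto\E_\chi[\Phi(\hat\chi_t)]$ in $\hat\Pi$. The overall idea is to separate the two independent sources of randomness — the driving Poisson measure $\eta$ and the initial sample $(r_0,v_0)\sim\nu^\chi$ — and then integrate out $\eta$.

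The first step is to replace $\Phi(\hat\chi_t)$ by $\phi(\beta(\rho_t))$ in expectation. Since $\rho_t$ is exchangeable (Proposition~\ref{Sampl:prop:rho-exch}), Theorem~\ref{thm:repr} together with Remark~\ref{rem:cond-repr} give that $\alpha(\nu^{\hat\chi_t})$ is a regular conditional distribution of $\rho_t$ given $\hat\chi_t$. Pushing forward by $\beta$ and using Proposition~\ref{Sampl:prop:sampl-unique} (which makes $\beta\circ\alpha$ the identity $\nu^{\hat\chi_t}$-a.\,s.), the conditional distribution of $\beta(\rho_t)$ given $\hat\chi_t$ is $\nu^{\hat\chi_t}$, so
\[\E_\chi[\Phi(\hat\chi_t)] = \E_\chi[\nu^{\hat\chi_t}\phi] = \E_\chi[\phi(\beta(\rho_t))].\]

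The second step uses the lookdown construction to write $\gamma_n(\beta(\rho_t))$ as a deterministic function of $\eta|_{(0,t]\times\hat\p^n}$ and $\gamma_n(r_0,v_0)$ only. By Proposition~\ref{Sampl:prop:rv-R} we identify $\beta(\rho_t)$ with $(r_t,v_t)$ from Subsection~\ref{Sampl:sec:dec-det}. Since parent levels are bounded by child levels at every reproduction event, iterating backwards gives $A_0(t,i)\leq i$ (cf.\ Remark~\ref{Sampl:rem:non-crossing}), so for every $i\in[n]$ the ancestor $A_0(t,i)$ lies in $[n]$. Going through the case distinctions in the definitions of $v_t(i)$ and $\rho_t(i,j)$, and using $\rho_0=\alpha(r_0,v_0)$, each entry of $\gamma_n(r_t,v_t)$ is either an $\eta$-determined constant, or that constant plus a linear combination with coefficients in $\{-1,0,1\}$ of at most three input coordinates among $\{v_0(A_0(t,k)),\, r_0(A_0(t,k),A_0(t,\ell)):\, k,\ell\in[n]\}$. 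This yields a measurable map $F_n$, affine in its second argument, with
\[\gamma_n(r_t,v_t) = F_n\bigl(\eta|_{(0,t]\times\hat\p^n},\, \gamma_n(r_0,v_0)\bigr)\quad\text{a.\,s.}\]
Independence of $\eta$ and $(r_0,v_0)\sim\nu^\chi$ together with Fubini then give
\[\E_\chi[\Phi(\hat\chi_t)] = \nu^\chi\tilde\phi,\qquad \tilde\phi(w):=\E\bigl[\phi\bigl(F_n(\eta|_{(0,t]\times\hat\p^n},\,w)\bigr)\bigr].\]

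The last step is the verification that $\tilde\phi\in\hat\C_n$, which I expect to be the main point of care. Boundedness is immediate. For each realization of $\eta$, the map $F_n(\eta,\cdot)$ is affine with Jacobian entries in $\{-1,0,1\}$, so $\phi\circ F_n(\eta,\cdot)$ is differentiable with gradient bounded by a multiple of $\|\nabla\phi\|_\infty$ and with modulus of continuity of the gradient controlled by that of $\nabla\phi$, both uniformly in $\eta$. Dominated convergence then transfers these properties to $\tilde\phi$ via $\nabla\tilde\phi(w)=\E[\nabla(\phi\circ F_n(\eta,\cdot))(w)]$, yielding boundedness and uniform continuity of $\nabla\tilde\phi$. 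Hence $\tilde\phi\in\hat\C_n$ and $\chi\mapsto\nu^\chi\tilde\phi\in\hat\Pi$, as required. The delicate ingredient is precisely the affineness of $F_n$ in the initial configuration; without it the invariance of $\hat\C_n$ under $\phi\mapsto\tilde\phi$ would not be clear.
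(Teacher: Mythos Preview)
Your proof is correct and follows essentially the same route as the paper: both pass from $\Phi(\hat\chi_t)$ to $\phi(\beta(\rho_t))$ via Proposition~\ref{Sampl:prop:eq-RP} (which you re-derive), identify $\beta(\rho_t)$ with $(r_t,v_t)$ via Proposition~\ref{Sampl:prop:rv-R}, and then average over $\eta$ the deterministic map $(r_0,v_0)\mapsto(r_t,v_t)$ restricted to the first $n$ coordinates. Your treatment is in fact more explicit than the paper's on the key regularity point: where the paper simply asserts that $g_n(\cdot,\eta)$ is differentiable with bounded uniformly continuous derivative, you observe that it is affine with $\{-1,0,1\}$-valued Jacobian, which makes the dominated-convergence passage to $\tilde\phi\in\hat\C_n$ transparent.
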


As a corollary, we obtain the Feller continuity of a $\hat\U$-valued $\Xi$-Fleming-Viot process, namely that its semigroup preserves the set of bounded continuous functions.
\begin{cor}
\label{Sampl:prop:feller-mmm}
For each $t\in\R_+$ and $f\in\Cb(\hat\U)$, the map $\hat\U\to\R$, $\chi\mapsto\E_\chi[f(\hat\chi_t)]$ is continuous.
\end{cor}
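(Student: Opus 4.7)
The plan is to bootstrap from Lemma \ref{Sampl:lem:Pi-preserve} (which gives the preservation of marked polynomials under the semigroup) to continuity against all of $\Cb(\hat\U)$, using the fact (Remark \ref{rem:conv-det}) that $\hat\Pi$ is convergence-determining on $\hat\U$. The strategy is the standard soft argument: convergence of integrals against a convergence-determining class of bounded continuous test functions implies weak convergence of the underlying probability measures, hence convergence of integrals against any bounded continuous function.

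Concretely, I would fix $t\in\R_+$, $f\in\Cb(\hat\U)$, and a convergent sequence $\chi_n\to\chi$ in $\hat\U$, and write $P_t(\chi',\cdot)$ for the law of $\hat\chi_t$ under $\P_{\chi'}$. For each marked polynomial $\Phi\in\hat\Pi$, Lemma \ref{Sampl:lem:Pi-preserve} yields that the map $\chi'\mapsto\E_{\chi'}[\Phi(\hat\chi_t)]$ itself belongs to $\hat\Pi$. By the very definition of the marked Gromov-weak topology, every element of $\hat\Pi$ is continuous on $\hat\U$, so this map is continuous and
\[\int\Phi\,dP_t(\chi_n,\cdot)=\E_{\chi_n}[\Phi(\hat\chi_t)]\ \longrightarrow\ \E_\chi[\Phi(\hat\chi_t)]=\int\Phi\,dP_t(\chi,\cdot)\]
for every $\Phi\in\hat\Pi$.

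Now I would invoke Remark \ref{rem:conv-det}: since $\hat\Pi$ is convergence-determining on $\hat\U$, the convergence of $\int\Phi\,dP_t(\chi_n,\cdot)$ to $\int\Phi\,dP_t(\chi,\cdot)$ for every $\Phi\in\hat\Pi$ forces the weak convergence $P_t(\chi_n,\cdot)\Rightarrow P_t(\chi,\cdot)$ of probability measures on $\hat\U$. Applying this to the bounded continuous function $f$ gives $\E_{\chi_n}[f(\hat\chi_t)]\to\E_\chi[f(\hat\chi_t)]$, which is the desired continuity at $\chi$. Since $\chi$ was arbitrary, the map $\chi\mapsto\E_\chi[f(\hat\chi_t)]$ is continuous on $\hat\U$.

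There is really no obstacle at this level: all the work has been put into Lemma \ref{Sampl:lem:Pi-preserve}, proved by the lookdown construction, and into the analytic fact that $\hat\Pi$ is convergence-determining. The only point worth being careful about is not needing a separate tightness argument — this is subsumed in the definition of convergence-determining used here, where convergence of integrals against the test class is by itself sufficient for weak convergence.
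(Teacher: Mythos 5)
Your argument is correct and is exactly the paper's proof: Lemma \ref{Sampl:lem:Pi-preserve} makes $\chi'\mapsto\E_{\chi'}[\Phi(\hat\chi_t)]$ a (continuous) marked polynomial, and since $\hat\Pi$ is convergence determining on $\hat\U$ (Remark \ref{rem:conv-det}), the transition laws converge weakly along $\chi_n\to\chi$, giving continuity against every $f\in\Cb(\hat\U)$. You have merely written out the details that the paper's two-line proof leaves implicit.
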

\begin{proof}
This follows from Lemma \ref{Sampl:lem:Pi-preserve} as the set $\hat\Pi$ of marked polynomials is convergence determining, we use the definition of convergence in distribution in $\hat\U$. 
\end{proof}

Let $\hat L$ denote the closure of $\hat\Pi$ in $\Cb(\hat\U)$ with respect to the supremum norm. For application in \cite{Conv}, we note two more corollaries of Lemma \ref{Sampl:lem:Pi-preserve}.
The first of them states that the semigroup of a $\hat\U$-valued $\Xi$-Fleming-Viot process can be restricted to a semigroup on $\hat L$ that is strongly continuous.
\begin{cor}
\label{Sampl:cor:sc}
Let $f\in \hat L$.
Then for each $t\in\R_+$, the function $\hat\U\to\R$, $\chi\mapsto\E_\chi[f(\hat\chi_t)]$ is an element of $\hat L$. Moreover,
\[\lim_{t\downarrow 0}\sup_{\chi\in\hat\U}
\abs{\E_\chi[f(\hat\chi_t)]-\E_\chi[f(\hat\chi_0)]}=0.\]
\end{cor}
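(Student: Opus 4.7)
The plan is to combine Lemma~\ref{Sampl:lem:Pi-preserve} with the fact that the semigroup $T_tf(\chi):=\E_\chi[f(\hat\chi_t)]$ is an $L^\infty$-contraction on $\Cb(\hat\U)$, and then to establish the uniform estimate at the level of marked polynomials through the lookdown construction of Section~\ref{Sampl:sec:dec-proc}.

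For the first assertion, given $f\in\hat L$ I would choose $\Phi_k\in\hat\Pi$ with $\|\Phi_k-f\|_\infty\to 0$. Lemma~\ref{Sampl:lem:Pi-preserve} gives $T_t\Phi_k\in\hat\Pi$, and the contraction $\|T_tg\|_\infty\leq\|g\|_\infty$ yields $\|T_t\Phi_k-T_tf\|_\infty\leq\|\Phi_k-f\|_\infty$, so $T_tf$ is a sup-norm limit of elements of $\hat\Pi$ and hence lies in the closure $\hat L$. The triangle bound
\[\|T_tf-f\|_\infty\leq 2\|f-\Phi_k\|_\infty+\|T_t\Phi_k-\Phi_k\|_\infty\]
then reduces the strong continuity claim to showing $\|T_t\Phi-\Phi\|_\infty\to 0$ as $t\downarrow 0$ for every marked polynomial $\Phi\in\hat\Pi$: choose $k$ so that the first term is $<\varepsilon/2$, then let $t\downarrow 0$.

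For $\Phi(\chi)=\nu^\chi\phi$ with $\phi\in\hat\C_n$, Remark~\ref{rem:cond-repr} together with Proposition~\ref{Sampl:prop:rv-R} yields the probabilistic representation
\[T_t\Phi(\chi)-\Phi(\chi)=\E_\chi\bigl[\phi(\gamma_n(r_t,v_t))-\phi(\gamma_n(r_0,v_0))\bigr],\]
where $(r_s,v_s)=\beta(\rho_s)$ is the decomposition read off the lookdown model from $\rho_0\sim\alpha(\nu^\chi)$. I would condition on $N_t:=\eta((0,t]\times\hat\p^n)$. On $\{N_t=0\}$ the process $(\gamma_n(r_s,v_s),s\leq t)$ has no jump, so $r_t=r_0$ and $v_t(i)=v_0(i)+t$ for all $i\in[n]$; the bounded uniformly continuous derivative of $\phi\in\hat\C_n$ then supplies a Lipschitz constant $L_\phi$ independent of the base point, giving $|\phi(\gamma_n(r_t,v_t))-\phi(\gamma_n(r_0,v_0))|\leq L_\phi nt$. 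On $\{N_t\geq 1\}$ the integrand is crudely bounded by $2\|\phi\|_\infty$, while the probability is at most $H_\Xi(\hat\p^n)t$. Combining these two estimates produces
\[\|T_t\Phi-\Phi\|_\infty\leq\bigl(L_\phi n+2\|\phi\|_\infty H_\Xi(\hat\p^n)\bigr)t,\]
uniformly in $\chi\in\hat\U$, and this vanishes as $t\downarrow 0$.

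I expect the main obstacle to be the dust-free regime, where $H_\Xi(\hat\p^n)=\infty$ so that the no-jump event is trivial. In that case the crude bound must be replaced: although the decomposition $(r_s,v_s)$ oscillates rapidly because every ancestral lineage receives infinitely many small reproduction events, the semi-ultrametric $\rho_s=\alpha(r_s,v_s)$ only satisfies $\rho_s=\rho_0+2s$ on the first $n$ levels until a macroscopic event in $\p^n$ occurs. I would therefore split $\phi(\gamma_n(r_t,v_t))-\phi(\gamma_n(r_0,v_0))$ into a component driven by the change of $\alpha(r_s,v_s)$, which is of order $t$ by the $r$- and $v$-derivatives of $\phi$, plus a component driven by the shuffling of mass between $v_s(i)$ and $r_s(i,k)$ at small events, which leaves $\alpha(r_s,v_s)$ invariant and must be controlled using the alignment of the two partial derivatives; the $\p^n$-events are again handled by the crude bound at the finite rate $H_\Xi(\p^n)<\infty$.
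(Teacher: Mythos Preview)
Your first assertion is proved exactly as in the paper: Lemma~\ref{Sampl:lem:Pi-preserve} plus the contraction property of $T_t$ and the definition of $\hat L$ as a closure.

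For the second assertion the paper takes a shorter route. Since $(\hat\chi_t,t\in\R_+)$ solves the martingale problem $(\hat B,\hat\Pi)$, one has directly
\[
\E_\chi[\Phi(\hat\chi_t)]-\Phi(\chi)=\E_\chi\Bigl[\int_0^t\hat B\Phi(\hat\chi_s)\,ds\Bigr],
\]
and since $\hat B\Phi$ is bounded this gives $\|T_t\Phi-\Phi\|_\infty\le t\,\|\hat B\Phi\|_\infty$. Your jump/no-jump decomposition is correct and produces essentially the same bound---your two terms $L_\phi n$ and $2\|\phi\|_\infty H_\Xi(\hat\p^n)$ are just explicit upper bounds for $\|\hat A_{\rm grow}\phi\|_\infty$ and $\|\hat A_{\rm repr}\phi\|_\infty$---so you have effectively re-derived the martingale estimate by hand. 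The paper's one-liner is cleaner because boundedness of $\hat B\Phi$ is already packaged into the generator.

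Regarding the dust-free case: you are right that your direct estimate fails there because $H_\Xi(\hat\p^n)=\infty$, but note that the paper's own proof invokes the martingale problem $(\hat B,\hat\Pi)$, which in Section~\ref{Sampl:sec:TVFV-mmm} is only established when $\Xi$ is \emph{not} dust-free (the operator $\hat A_{\rm repr}$ involves the rates $\lambda_{n,\sigma}$, which blow up otherwise). So the dust-free regime is not covered by the paper's argument for $(\hat B,\hat\Pi)$ either; in that regime the paper points instead to the $\U$-valued and $\UUerg$-valued processes and their martingale problems $(B,\Pi)$ and $(C,\Cc)$, whose generators \emph{are} bounded because $H_\Xi(\p^n)<\infty$ always. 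Your proposed fix---splitting the increment of $\phi(\gamma_n(r_t,v_t))$ into a part along $\alpha(r_s,v_s)$ and a ``mass-shuffling'' part controlled by aligning partial derivatives---is not a complete argument as written, and is unnecessary: if you want to treat dust-free $\Xi$, reduce to the $\U$-valued process via Remark~\ref{Sampl:rem:tv-df} and run your (or the paper's) estimate with $\p^n$ in place of $\hat\p^n$.
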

\begin{proof}
The first assertion follows from Lemma \ref{Sampl:lem:Pi-preserve} and the definition of $\hat L$.
As $(\hat\chi_t,t\in\R_+)$ solves the martingale problem $(\hat B,\hat\Pi)$ from Section~\ref{Sampl:sec:TVFV-mmm},
\[\E_\chi[\Phi(\hat\chi_t)]-\E_\chi[\Phi(\hat\chi_0)]
=\E_\chi[\int_0^t\hat B\Phi(\hat\chi_s)ds]\]
for all $t\in\R_+$ and $\Phi\in\hat\Pi$. The second assertion follows as $\hat B\Phi$ is bounded and by definition of $\hat L$.
\end{proof}
The next corollary says that the semigroup on $\hat L$ of a $\hat\U$-valued $\Xi$-Fleming-Viot process is generated by the closure of the operator $\hat B$ with domain $\hat\Pi$, see \cite{EK86}*{Chapter 1} for the definitions.
\begin{cor}
The subspace $\hat\Pi\subset\Cb(\hat\U)$ is a core for the generator of the semigroup on $\hat L$ of a $\hat\U$-valued $\Xi$-Fleming-Viot process.
\end{cor}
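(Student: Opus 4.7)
The plan is to invoke the standard core criterion from strongly continuous semigroup theory (see, e.\,g., \cite{EK86}*{Proposition 1.3.3}): a dense subspace $D$ of a Banach space $L$ that is contained in the domain $\mathcal D(G)$ of the generator $G$ of a strongly continuous semigroup $\{T_t\}$ on $L$ and satisfies $T_t D\subset D$ for every $t\geq 0$ is automatically a core for $G$. I would apply this with $L=\hat L$, $D=\hat\Pi$, and $\{T_t\}$ the semigroup on $\hat L$ provided by Corollary \ref{Sampl:cor:sc}.

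Of the three hypotheses, density of $\hat\Pi$ in $\hat L$ is built into the very definition of $\hat L$ as the closure of $\hat\Pi$ in $\Cb(\hat\U)$, and invariance $T_t\hat\Pi\subset\hat\Pi$ is precisely Lemma \ref{Sampl:lem:Pi-preserve}. The heart of the argument is therefore to establish the inclusion $\hat\Pi\subset\mathcal D(G)$. For a fixed $\Phi\in\hat\Pi$ associated with $\phi\in\hat\C_n$, I would start from the martingale problem $(\hat B,\hat\Pi)$ of Section \ref{Sampl:sec:TVFV-mmm}: taking expectations under $\P_\chi$ and applying Fubini yields the identity
\begin{equation*}
T_t\Phi(\chi)-\Phi(\chi)=\int_0^t T_s(\hat B\Phi)(\chi)\,ds
\end{equation*}
in $\hat L$, provided one checks that $\hat B\Phi\in\hat L$. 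The latter is read off from Proposition \ref{Sampl:prop:mp-R}: $\hat A_{\rm grow}\phi$ is a finite sum of components of the uniformly continuous derivative $\nabla\phi$, and $\hat A_{\rm repr}\phi$ is a finite linear combination of functions of the form $\phi\circ\sigma$; in both cases the map $\chi\mapsto\nu^\chi(\hat A\phi)$ is a bounded continuous function on $\hat\U$ that lies in $\hat L$ (it can even be approximated by marked polynomials). Combined with the strong continuity of $\{T_t\}$, the displayed identity gives $t^{-1}(T_t\Phi-\Phi)\to\hat B\Phi$ in $\hat L$ as $t\downarrow 0$, which is exactly $\Phi\in\mathcal D(G)$ with $G\Phi=\hat B\Phi$.

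The main subtlety I foresee concerns the dust-free case, in which Proposition \ref{Sampl:prop:mp-R} does not formally apply because some of the rates $\lambda_{n,\sigma}$ diverge. There I would fall back on Remark \ref{Sampl:rem:tv-df} and Proposition \ref{Sampl:prop:rho-df}: for every $t>0$ the state $\hat\chi_t$ is almost surely a dust-free marked metric measure space, determined by the associated metric measure space $\chi_t\in\U$, so the evaluation at $\hat\chi_t$ of a marked polynomial $\Phi$ coincides with the evaluation at $\chi_t$ of the polynomial in $\Pi$ obtained by setting the mark coordinates of $\phi$ to zero. This reduces the verification of $\hat\Pi\subset\mathcal D(G)$ to the analogous verification on the $\U$-side, which is handled by the same scheme using the martingale problem $(B,\Pi)$ from Section \ref{Sampl:sec:TVFV-mm}.
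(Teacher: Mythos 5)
Your main argument coincides with the paper's proof, which deduces the corollary from Proposition 1.3.3 (together with Corollary 1.1.6) of \cite{EK86}, Lemma \ref{Sampl:lem:Pi-preserve} and Corollary \ref{Sampl:cor:sc}, after noting that $\hat B$ on $\hat\Pi$ is a restriction of the generator; your verification of $\hat\Pi\subset\mathcal{D}(G)$ via the identity $T_t\Phi-\Phi=\int_0^t T_s\hat B\Phi\,ds$ from the martingale problem, the observation that $\hat B\Phi\in\hat L$, and strong continuity is exactly what that citation encapsulates, so on this part you and the paper agree.

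The only substantive difference is your final paragraph on the dust-free case, and there the proposed repair does not work. The obstruction is not merely that Proposition \ref{Sampl:prop:mp-R} is unavailable: the state space $\hat\U$ contains initial states $\chi$ carrying dust regardless of $\Xi$, and under a dust-free $\Xi$ such a state loses its dust instantaneously. Indeed, for $t>0$ the state $\hat\chi_t$ is almost surely dust-free (Remark \ref{Sampl:rem:tv-df} and Proposition \ref{Sampl:prop:rho-df}), so for the marked polynomial $\Phi$ associated with $\phi(r,v)=g(v(1))$, with $g$ bounded, nonconstant, differentiable with bounded uniformly continuous derivative, one has $\E_\chi[\Phi(\hat\chi_t)]=g(0)$ for all $t>0$, while $\Phi(\chi)=\nu^\chi\phi$ differs from $g(0)$ for a dusty $\chi$ (e.g.\ the star tree with all marks equal to $1$). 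Hence $T_t\Phi(\chi)$ does not converge to $\Phi(\chi)$ as $t\downarrow 0$, the semigroup on $\hat L$ fails to be strongly continuous, and no reduction to the $\U$-side can place $\hat\Pi$ in the domain of the generator: your reduction only controls the evolution at positive times and says nothing about the difference quotient at $t=0$ at dusty states. The corollary, like Corollary \ref{Sampl:cor:sc} whose proof also invokes the martingale problem $(\hat B,\hat\Pi)$, is to be read in the regime where that martingale problem from Section \ref{Sampl:sec:TVFV-mmm} is available, i.e.\ $\Xi$ not dust-free; the dust-free case is treated by the paper on the $\U$-side, in the closing paragraph of Section \ref{Sampl:sec:semigroup}, with the operator $B$ and domain $\Pi$, which is the correct substitute for your reduction.
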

\begin{proof}
We note that $\hat B$ is the restriction of the generator of the semigroup to $\hat\Pi$ and apply Proposition 1.3.3 and Corollary 1.1.6 of \cite{EK86}, using Lemma \ref{Sampl:lem:Pi-preserve} and Corollary \ref{Sampl:cor:sc}. 
\end{proof}

Let $L$ be the closure of $\Pi$ in $\Cb(\U)$ and let $L'$ be the closure of $\Cc$ in $\Cb(\UUerg)$, with respect to the supremum norm. In the same way as above, it can be shown: The semigroup on $L'$ of a $\UUerg$-valued $\Xi$-Fleming-Viot process is strongly continuous and generated by the closure of the operator $C$ with domain $\Cc$ from Section~\ref{Sampl:sec:TVFV-dm}. If $\Xi$ is dust-free, then the semigroup on $L$ of a $\U$-valued $\Xi$-Fleming-Viot process is strongly continuous and generated by the closure of the operator $B$ with domain $\Pi$ from Section~\ref{Sampl:sec:TVFV-mm}. Continuity properties analogous to Proposition~\ref{Sampl:prop:feller-mmm} also hold.

\section{Convergence to equilibrium}
\label{Sampl:sec:equil}
Let $\Xi$ be a finite measure on the simplex $\Delta$ with $\Xi(\Delta)>0$.
We show convergence to equilibrium for the $\hat\Uu$-valued process $(\beta(\rho_t),t\in\R_+)$ from Section~\ref{Sampl:sec:dec-stoch}.
From this, we deduce in Proposition \ref{prop:equil} that also the tree-valued $\Xi$-Fleming-Viot process
from Section~\ref{Sampl:sec:TVFV-mmm} converges to equilibrium. In the same way, it can be shown that the other processes from Section~\ref{Sampl:sec:TVFV} converge to equilibrium.

We define stationary processes and use a coupling argument. Analogously to Section \ref{Sampl:sec:Xi-ld}, let $\bar\eta$ be a Poisson random measure on $\R\times\p$ with intensity $dt\;H_\Xi(d\pi)$. This Poisson random measure drives a population model in two-sided time (with time axis $\R$) where the reproduction events and the ancestral levels $\bar A_s(t,i)$ are defined as in Section~\ref{sec:det-ld-gen}.
Then we define the stationary $\Uu$-valued process $(\bar\rho_t,t\in\R)$ of the genealogical distances by
\[\bar\rho_t(i,j)=2t-2\sup\{s\in(-\infty,t]:\bar A_s(t,i)=\bar A_s(t,j)\}\]
for $t\in\R$, $i,j\in\N$.
On an event of probability $1$, all these distances are finite. This follows from the assumption that $\Xi(\Delta)>0$. That $\bar\rho_t$ is indeed a semi-ultrametric for each $t\in\R$ can be seen as in Remark~\ref{Sampl:rem:rho-um}. Clearly, $\rho_t$ is exchangeable, which follows from exchangeability of the $\Xi$-coalescent as in Remark \ref{Sampl:rem:Xi-coal} or can be shown as in the proof of Proposition \ref{Sampl:prop:rho-exch}.

Let $\eta$ denote the restriction of $\bar\eta$ to $(0,\infty)\times\p$. Let $\chi\in\hat\U$ be arbitrary, and let $\rho_0$ be a $\Uu$-valued random variable with distribution $\alpha(\nu^\chi)$, independent of $\eta$. Let the process $(\rho_t,t\in\R_+)$ be defined from $\rho_0$ and $\eta$ as in Section~\ref{sec:det-ld-gen}.
For $n\geq 2$, on the event $\{\max_{i,j\in[n]}\bar\rho_t(i,j)<2t\}$, the marked distance matrix $\gamma_n(\beta(\rho_t))$ does not depend on $\rho_0$. This follows from the construction in Section~\ref{sec:det-ld-gen} and the definition of the map $\beta$ in Section \ref{Sampl:sec:dist-dec}.
As $\bar\rho_t$ can also be obtained from $\bar\rho_0$ and $\eta$ as in Section~\ref{sec:det-ld-gen}, it follows that $\gamma_n(\beta(\rho_t))=\gamma_n(\beta(\bar\rho_t))$ on the event $\{\max_{i,j\in[n]}\bar\rho_t(i,j)<2t\}$. By stationarity of $(\bar\rho_t,t\in\R)$, it follows that
\begin{equation}
\label{Sampl:eq:conv-rv}
\left|\E[\phi(\beta(\rho_t))]-\E[\phi(\beta(\bar\rho_0))]\right|\leq
2\sup\abs{\phi}\P(\max_{i,j\in[n]}\bar\rho_0(i,j)\geq 2t)\to 0\quad (t\to\infty)
\end{equation}
for all $\phi\in\hat\C_n$.

We call a $\hat\U$-valued random variable that is distributed as $\bar\chi_0:=\hat\psi(\beta(\bar\rho_0))$ a $\Xi$-coalescent measure tree, generalizing the $\Lambda$-coalescent measure tree from \cite{GPW13}.
A $\hat\U$-valued $\Xi$-Fleming-Viot process $(\chi_t,t\in\R_+)$ with initial state $\chi$ is given by $\chi_t=\hat\psi(\beta(\rho_t))$, as in Section~\ref{Sampl:sec:TVFV-mmm}.
\begin{prop}
\label{prop:equil}
The $\hat\U$-valued random variable $\chi_t$ converges in distribution to a $\Xi$-coalescent measure tree as $t\to\infty$.
\end{prop}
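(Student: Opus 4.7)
The plan is to reduce convergence in distribution in $\hat\U$ to convergence of expectations of marked polynomials and then appeal directly to \reff{Sampl:eq:conv-rv}. By Remark~\ref{rem:conv-det}, $\hat\Pi$ is convergence determining on $\hat\U$, so it suffices to verify $\E[\Phi(\chi_t)]\to\E[\Phi(\bar\chi_0)]$ for every marked polynomial $\Phi\in\hat\Pi$ (i.\,e.\ $\Phi(\chi)=\nu^\chi\phi$ for some $\phi\in\hat\C_n$ and $n\in\N$).

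The key identification is $\E[\Phi(\chi_t)]=\E[\phi(\beta(\rho_t))]$, and likewise with $(\chi_t,\rho_t)$ replaced by $(\bar\chi_0,\bar\rho_0)$. To obtain this, let $(r,v)$ be a $\hat\Uu$-valued random variable with conditional distribution $\nu^{\chi_t}$ given $\chi_t$, so that $\rho'_t:=\alpha(r,v)$ has conditional distribution $\alpha(\nu^{\chi_t})$ given $\chi_t$. By Theorem~\ref{thm:repr}\ref{item:thm:repr:sampl} applied to the exchangeable random variable $\rho_t$ (exchangeable by Proposition~\ref{Sampl:prop:rho-exch}), the random variable $\rho'_t$ is equal in distribution to $\rho_t$, hence $\beta(\rho'_t)$ is equal in distribution to $\beta(\rho_t)$. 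On the other hand, Proposition~\ref{Sampl:prop:sampl-unique} applied to any representative of $\chi_t$ (whose marked distance matrix distribution $\nu^{\chi_t}$ is concentrated on $\hat\Uu$ since $\chi_t\in\hat\U$) yields $\beta(\rho'_t)=\beta\circ\alpha(r,v)=(r,v)$ almost surely. Combining these two observations, the unconditional distribution of $(r,v)$ coincides with the distribution of $\beta(\rho_t)$, and therefore
\[\E[\Phi(\chi_t)]=\E[\nu^{\chi_t}\phi]=\E[\phi(r,v)]=\E[\phi(\beta(\rho_t))].\]
The identical argument, using that $\bar\rho_0$ is exchangeable and applying Theorem~\ref{thm:repr} and Proposition~\ref{Sampl:prop:sampl-unique} to $\bar\rho_0$, gives $\E[\Phi(\bar\chi_0)]=\E[\phi(\beta(\bar\rho_0))]$.

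The convergence $\E[\phi(\beta(\rho_t))]\to\E[\phi(\beta(\bar\rho_0))]$ is precisely \reff{Sampl:eq:conv-rv}, which was obtained in the text via the coupling between $(\rho_t,t\in\R_+)$ and $(\bar\rho_t,t\in\R)$ through the shared Poisson random measure on $(0,\infty)\times\p$. Hence $\E[\Phi(\chi_t)]\to\E[\Phi(\bar\chi_0)]$ for every $\Phi\in\hat\Pi$, which by Remark~\ref{rem:conv-det} completes the proof.

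The main (very mild) obstacle is the identification step $\E[\Phi(\chi_t)]=\E[\phi(\beta(\rho_t))]$: since the reconstruction map $\hat\psi$ is only measurable (Proposition~\ref{Sampl:prop:meas}) and not known to be continuous, one cannot conclude convergence of $\chi_t=\hat\psi(\beta(\rho_t))$ by a direct continuous-mapping argument from convergence of $\beta(\rho_t)$. Instead one must pass through the representation theorem to interpret the marked polynomial $\Phi(\chi_t)=\nu^{\chi_t}\phi$ as the expectation of $\phi$ evaluated at the single-realization decomposition $\beta(\rho_t)$, which is exactly what the combination of Theorem~\ref{thm:repr}\ref{item:thm:repr:sampl} and Proposition~\ref{Sampl:prop:sampl-unique} provides.
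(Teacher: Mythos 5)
Your proposal is correct and follows essentially the same route as the paper: the identification $\E[\Phi(\chi_t)]=\E[\phi(\beta(\rho_t))]$ (and likewise for $\bar\chi_0$, $\bar\rho_0$) is exactly the content of Proposition~\ref{Sampl:prop:eq-RP}, which the paper cites and you re-derive from Theorem~\ref{thm:repr}\ref{item:thm:repr:sampl} together with Proposition~\ref{Sampl:prop:sampl-unique}, before concluding via the coupling bound \reff{Sampl:eq:conv-rv} and the fact that $\hat\Pi$ is convergence determining.
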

\begin{proof}
As in Proposition~\ref{Sampl:prop:eq-RP} below, we obtain $\E[\Phi(\bar\chi_0)]=\E[\phi(\bar\rho_0)]$ and $\E[\Phi(\chi_t)]=\E[\phi(\rho_t)]$. The convergence~\reff{Sampl:eq:conv-rv} then yields that $\E[\Phi(\chi_t)]$ converges to $\E[\Phi(\bar\chi_0)]$ as $t\to\infty$
for all marked polynomials $\Phi\in\hat\Pi$. The assertion follows as the set $\hat\Pi$ is convergence determining in $\hat\U$.
\end{proof}

A stationary $\hat\U$-valued $\Xi$-Fleming-Viot process can be defined by $(\hat\psi(\beta(\bar\rho_t)),t\in\R)$. In \cite{GPW13}*{Theorem 3}, duality is used to show that the tree-valued Fleming-Viot process converges to an equilibrium.
In \cite{DK99}*{Theorem 4.1}, convergence to stationarity of measure-valued Fleming-Viot processes is also proved by a coupling argument.

\section{Proofs of the general results}
\label{sec:proofs}
In Subsection \ref{sec:proof:sampl-unique}, we prove Proposition \ref{Sampl:prop:sampl-unique} which is needed for the proof of the uniqueness result (Proposition \ref{Sampl:prop:uniqueness}) in Subsection \ref{sec:proofs:resampling}.
We prove the sampling representation (Theorem \ref{thm:repr}) in Subsections \ref{Sampl:sec:proof:meas} -- \ref{Sampl:sec:proof:sampling}.
Theorem \ref{Sampl:thm:tv} gives the application to tree-valued processes and is proved in Subsection \ref{sec:proof:tv}.

\subsection{Proof of Proposition \ref{Sampl:prop:sampl-unique}}
\label{sec:proof:sampl-unique}
The proof of this result from Section \ref{Sampl:sec:mmm-trees} relies on the fact that in a separable metric space, an iid sequence with respect to a probability measure on the Borel sigma algebra has no isolated elements.
\begin{proof}[Proof of Proposition \ref{Sampl:prop:sampl-unique}]
Let $((x(i),v(i)),i\in\N)$ be an $m$-iid sequence in $X\times\R_+$. We may assume
\[r=(r(i,j))_{i,j\in\N}=(r'(x(i),x(j)))_{i,j\in\N}.\]
We write $\rho=\alpha(r,v)$. We show that $v=\Upsilon(\rho)$ a.\,s.\ from which the assertion follows by definition of the map $\beta$.

Let $\ep>0$ and $i\in\N$. By separability, $X\times\R_+$ can be covered by countably many balls of diameter $\ep$. This implies
\[m\{(x',v')\in X\times\R_+: r'(x(i),x')\vee|v(i)-v'|\leq 2\ep\}>0\quad\text{a.\,s.,}\]
and that there exists a random $j\in\N\setminus\{i\}$ with
\begin{equation}
\label{Sampl:eq:sampl-unique-j}
r'(x(i),x(j))\vee|v(i)-v(j)|\leq 2\ep\quad\text{a.\,s.}
\end{equation}
By inequality~\reff{Sampl:eq:sampl-unique-j} and the definition of $\rho$, it follows that
\[2v(i)+4\ep\geq v(i)+v(j)+r(i,j)=\rho(i,j)\quad\text{a.\,s.}\]
Using the definition of the map $\Upsilon$, we deduce
\[v(i)+2\ep\geq\tfrac{1}{2}\rho(i,j)\geq \Upsilon(\rho)(i)\quad\text{a.\,s.}\]
For the converse inequality, we first note that
\begin{equation}
\label{Sampl:eq:sampl-unique-vrho}
2v(i)\leq v(i)+v(j)+2\ep+r(i,j)=\rho(i,j)+2\ep
\end{equation}
by inequality~\reff{Sampl:eq:sampl-unique-j} and the definition of $\rho$.
Moreover, for all $k\in\N\setminus\{i,j\}$, we obtain
\begin{align*}
&2v(i)-2\ep\leq \rho(i,j)\leq\rho(i,k)\vee\rho(k,j)\\
&\leq v(k)+r(i,k)\vee r(k,j)+v(i)\vee v(j)\\
&\leq v(k)+r(i,k)+v(i)+\left|r(k,j)-r(i,k)\right|+\left|v(j)-v(i)\right|\\
&\leq\rho(i,k)+r(i,j)+2\ep\leq\rho(i,k)+4\ep\quad\text{a.\,s.}
\end{align*}
Here we use inequality~\reff{Sampl:eq:sampl-unique-vrho} for the first and inequality~\reff{Sampl:eq:sampl-unique-j} for the fifth and sixth step, the definition of $\rho$ for the third and fifth step, and ultrametricity for the second step.
By definition of the map $\Upsilon$, we obtain
\[\Upsilon(\rho)(i)=\tfrac{1}{2}\inf_{k\in\N\setminus\{i\}}\rho(i,k)
\geq v(i)-3\ep\quad\text{a.\,s.}\]
As $\ep>0$ and $i\in\N$ were arbitrary, it follows that $\Upsilon(\rho)=v$ a.\,s.
\end{proof}

\subsection{Measurability of the construction of (marked) metric measure spaces}
\label{Sampl:sec:proof:meas}
In this subsection, we show Proposition \ref{Sampl:prop:meas} from Section \ref{Sampl:sec:mdm-mmm-new}. We only discuss measurability of the map $\hat\psi:\Dd\times\R_+^{\N}\to\hat\M$ therein. Measurability of the map $\psi:\Dd\times\M$ follows along the same lines.

Recall that the Prohorov distance between two probability measures $\mu$ and $\mu'$ on the Borel sigma algebra on a metric space $(Z,d^Z)$ is given by
\begin{equation}
\label{eq:Proh}
\dP^Z(\mu,\mu')=\inf\{\ep>0: \mu(F)\leq\mu'(F^\ep)+\ep\text{ for all closed $F\subset Z$}\},
\end{equation}
where $F^\ep=\{z\in Z: d^Z(z,F)<\ep\}$. If $(Z,d^Z)$ is separable, then the coupling characterization of the Prohorov distance holds, which can be found e.\,g.\ in \cite{EK86}*{Theorem 3.1.2}:
\begin{equation}
\label{Sampl:eq:Proh-coupling}
\dP^Z(\mu,\mu')=\inf_\xi\inf\{\ep>0:\xi\{(x,y)\in Z^2:d^Z(x,y)>\ep\}<\ep\},
\end{equation}
where the first infimum is over all couplings $\xi$ of the probability measures $\mu$ and $\mu'$.

We also use the marked Gromov-Prohorov distance $\dMGP$ which metrizes the marked Gromov-weak topology on $\hat\M$, see \cite{DGP11}. It is defined by
\[\dMGP((X,r,m),(X',r',m'))=\inf_{Z,\varphi,\varphi'}\dP^Z(\hat\varphi(m),\hat\varphi'(m'))\]
for marked metric measure spaces $(X,r,m)$ and $(X',r',m')$.
Here the infimum is over all isometric embeddings $\varphi:X\to Z$ and $\varphi':X'\to Z$ into complete and separable metric spaces $(Z,d^Z)$. The space $Z\times\R_+$ is endowed with the product metric $d^{Z\times\R_+}((z,v),(z',v'))=d^Z(z,z')\vee|v-v'|$, analogously for $X\times\R_+$ and $X'\times\R_+$. The maps $\hat\varphi:X\times\R_+\to Z\times\R_+$ and $\hat\varphi':X'\times\R_+\to Z\times\R_+$ are defined by
$\hat\varphi(x,v)=(\varphi(x),v)$, $(x,v)\in X\times\R_+$ and
$\hat\varphi'(x',v)=(\varphi(x'),v)$, $(x',v)\in X'\times\R_+$.

We write $\hat\Dd=\Dd\times\R_+^\N$\label{Sampl:not:hatDd}. For $n\in\N$, we denote by
\begin{align*}
\hat\Dd_n=\{&(r,v)\in\R_+^{n^2}\times\R_+^n:r(i,i)=0,r(i,j)=r(j,i),\\
&r(i,j)+r(j,k)\geq r(i,k)\text{ for all }i,j,k\in[n]\}
\end{align*}
the space of decomposed semimetrics on $[n]$ which we view as a subspace of $\R^{n^2}\times\R^n$. We denote by $\hat\psi_n:\hat\Dd_n\to\hat\M$ the function that maps $(r,v)\in\hat\Dd_n$ to the isomorphy class of the marked metric measure space
$([n],r,n^{-1}\sum_{i=1}^n\delta_{(i,v(i))})$, here we also identify the elements of the semi-metric space $([n],r)$ with distance zero.
\begin{lem}
\label{Sampl:rem:cont-psin}
The map $\hat\psi_n:\Dd_n\to\hat\M$ is continuous.
\end{lem}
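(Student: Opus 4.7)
The plan is to work with the definition of the marked Gromov-weak topology as weak convergence of marked distance matrix distributions, rather than constructing an explicit ambient metric space and computing $\dMGP$ directly.

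First, I would recall that for any marked metric measure space $(X,r',m)$, the marked distance matrix distribution is the law of $((r'(x(i),x(j)))_{i,j\in\N}, (v(i))_{i\in\N})$ for an $m$-iid sequence $(x(i),v(i))_{i\in\N}$, and that marked metric measure spaces converge in $\hat\M$ if and only if their marked distance matrix distributions converge weakly in $\R^{\N^2}\times\R^\N$ (equipped with the product topology). For $(r,v)\in\hat\Dd_n$, the sampling measure $n^{-1}\sum_{i=1}^n\delta_{(i,v(i))}$ is supported on $[n]\times\R_+$, so an iid sample from it can be realized as $((I_k,v(I_k)))_{k\in\N}$ with $(I_k)_{k\in\N}$ iid uniform on $[n]$. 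Hence $\nu^{\hat\psi_n(r,v)}$ is the distribution of
\[\bigl((r(I_i,I_j))_{i,j\in\N},(v(I_i))_{i\in\N}\bigr).\]

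Second, suppose $(r^m,v^m)\to(r,v)$ in $\hat\Dd_n\subset\R^{n^2}\times\R^n$, i.\,e.\ coordinatewise. Couple the sampling by using a single iid uniform sequence $(I_k)_{k\in\N}$ on $[n]$ for every $m$. Then for every realization of $(I_k)$ and every finite tuple $(i_1,\ldots,i_N)$, the finite-dimensional projection
\[\bigl((r^m(I_{i_a},I_{i_b}))_{a,b\in[N]},(v^m(I_{i_a}))_{a\in[N]}\bigr)\]
converges almost surely to the corresponding tuple without the superscript $m$, because only finitely many of the entries $r^m(i,j)$ and $v^m(i)$ with $i,j\in[n]$ enter the formula and each converges by assumption. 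Almost sure convergence of all finite-dimensional projections implies almost sure convergence in $\R^{\N^2}\times\R^\N$ with the product topology, hence weak convergence of the laws.

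Third, I would conclude: weak convergence of marked distance matrix distributions is, by definition of the marked Gromov-weak topology, convergence of the isomorphy classes, so $\hat\psi_n(r^m,v^m)\to\hat\psi_n(r,v)$ in $\hat\M$. There is no real obstacle here; the only thing worth being careful about is keeping the labels $[n]$ as the points themselves (rather than passing to the quotient by zero-distance identification) when writing the sample, which is legitimate because both the distances and the marks only depend on the labels through $r$ and $v$, and the marked distance matrix distribution is manifestly unchanged by the quotient. The analogous argument, with $v$ suppressed, yields continuity of $\psi_n$ and will be reused when writing $\psi$ and $\hat\psi$ as pointwise limits of the continuous maps $\psi_n$ and $\hat\psi_n$ in the proof of Proposition~\ref{Sampl:prop:meas}.
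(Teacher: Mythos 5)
Your argument is correct and takes essentially the same route as the paper: both realize $\nu^{\hat\psi_n(r,v)}$ as the law of the sample indexed by a common iid uniform sequence on $[n]$ and conclude via the characterization of the marked Gromov-weak topology through weak convergence of marked distance matrix distributions. The only difference is cosmetic: the paper turns the same common-index coupling into a quantitative bound $\dP(\nu^{\hat\psi_n(r,v)},\nu^{\hat\psi_n(r',v')})\leq\max_{i,j\in[n]}\left|r(i,j)-r'(i,j)\right|\vee\max_{i\in[n]}\left|v(i)-v'(i)\right|$ via the coupling characterization of the Prohorov metric, whereas you deduce weak convergence from almost sure coordinatewise convergence of the coupled samples along a convergent sequence.
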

\begin{proof}
W.\,l.\,o.\,g.\ we can assume that $\hat\Dd$ is endowed with the metric $d$ that is given by
\[d((r,v),(r',v'))=\sup_{k\in\N}((\max_{i,j\in[k]}\left|r(i,j)-r'(i,j)\right|\vee\max_{i\in[k]}\left|v(i)-v'(i)\right|)\wedge (2^{-k}))\]
for all $(r,v),(r',v')\in\hat\Dd$. For $(r,v),(r',v')\in\hat\Dd_n$, we define a probability measure $\xi$ on $(\hat\Dd)^2$ as the distribution of
$((r(x_i,x_j))_{i,j\in\N},(\tilde v_i)_{i\in\N},
(r'(x'_i,x'_j))_{i,j\in\N},(\tilde v'_i)_{i\in\N})$,
where $(x_i,\tilde v_i,x'_i,\tilde v'_i)_{i\in\N}$ is an iid sequence with distribution
$n^{-1}\sum_{k=1}^n\delta_{(k,v(k),k,v'(k))}$.
Then
$\xi(\cdot\times\hat\Dd)=\nu^{\hat\psi_n(r,v)}$
and
$\xi(\hat\Dd\times\cdot)=\nu^{\hat\psi_n(r',v')}$.
For
\[c:=\max_{i,j\in[n]}\left|r(i,j)-r'(i,j)\right|\vee\max_{i\in[n]}\left|v(i)-v'(i)\right|,\]
the coupling characterization~\reff{Sampl:eq:Proh-coupling} implies
\[\dP(\nu^{\hat\psi_n(r,v)},\nu^{\hat\psi_n(r',v')})
\leq c+\xi\{(y,y')\in\hat\Dd^2:d(y,y')> c\}=c.\]
Continuity of $\hat\psi_n$ follows by definition of the marked Gromov-weak topology.
\end{proof}
\begin{proof}[Proof of Proposition~\ref{Sampl:prop:meas}]
Let $(r,v)\in\hat\Dd^*$ and let $(X,r)$ be the metric completion of $(\N,r)$. We endow the product space $X\times\R_+$ with the metric $d^{X\times\R_+}((x,v),(x',v'))=r(x,v)\vee|v-v'|$. The definition of $\hat\Dd^*$ yields $\lim_{n\to\infty}\dP^{X\times\R_+}(n^{-1}\sum_{i=1}^n\delta_{(i,v(i))},m)=0$
for a probability measure $m$ on $X\times\R_+$. As $\hat\psi(r,v)$ equals the isomorphy class of $(X,r,m)$, and as
$\hat\psi_n(\gamma_n(r,v))$ equals the isomorphy class of $(X,r,n^{-1}\sum_{i=1}^n\delta_{(i,v(i))})$ for each $n\in\N$,
the definition of the marked Gromov-Prohorov metric implies that
$\lim_{n\to\infty}\dMGP(\hat\psi(r,v),\hat\psi_n(\gamma_n(r,v)))=0$.

For $(r,v)\in\hat\Dd\setminus\hat\Dd^*$, the image $\hat\psi(r,v)$ is constant by definition.
Using Lemma~\ref{Sampl:rem:cont-psin} and Lemma \ref{lem:dstar-meas} below, we deduce measurability of $\hat\psi$.
\end{proof}
\begin{lem}
\label{lem:dstar-meas}
The subsets $\hat\Dd^*\subset\hat\Dd$ and $\Dd^*\subset\Dd$ are measurable.
\end{lem}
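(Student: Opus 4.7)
My plan is to exhibit $\hat\Dd^*$ as a countable intersection of Borel sets via a Cauchy criterion. For $(r,v)\in\hat\Dd$, let $(X,r)$ be the metric completion of $(\N,r)$ (after identifying distance-zero pairs) and set $\mu_n := n^{-1}\sum_{i=1}^n\delta_{(i,v(i))}$. Since the product space $X\times\R_+$ is complete and separable, the weak limit $\lim_n \mu_n$ exists if and only if $(\mu_n)_{n\in\N}$ is Cauchy in the Prohorov metric $\dP^{X\times\R_+}$. So $\hat\Dd^* = \{(r,v): (\mu_n)_n \text{ is Cauchy in }\dP^{X\times\R_+}\}$.

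Next, I would use the coupling characterization~\reff{Sampl:eq:Proh-coupling} to express $\dP^{X\times\R_+}(\mu_n,\mu_m)$ intrinsically. Every coupling of $\mu_n$ and $\mu_m$ is concentrated on the finite set $\{(i,v(i)):i\in[n]\}\times\{(j,v(j)):j\in[m]\}$, and on these support points the ambient product metric takes the value $d^{X\times\R_+}((i,v(i)),(j,v(j))) = r(i,j)\vee|v(i)-v(j)|$. Hence
\begin{equation*}
\dP^{X\times\R_+}(\mu_n,\mu_m) = \inf_\xi\inf\{\ep>0 : \xi\{(i,j)\in[n]\times[m] : r(i,j)\vee|v(i)-v(j)|>\ep\}<\ep\},
\end{equation*}
where the outer infimum is over probability measures $\xi$ on $[n]\times[m]$ with marginals $n^{-1}\mathbf{1}_{[n]}$ and $m^{-1}\mathbf{1}_{[m]}$. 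Denote the right-hand side by $F_{n,m}(r,v)$. It depends only on the restriction $\gamma_{n\vee m}(r,v)$, and being the infimum of a continuous functional over the compact polytope of couplings, it is continuous (hence Borel) as a function $\hat\Dd\to\R_+$.

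With $F_{n,m}$ Borel, the Cauchy criterion immediately yields
\begin{equation*}
\hat\Dd^* \;=\; \bigcap_{k\in\N}\,\bigcup_{N\in\N}\,\bigcap_{n,m\geq N}\{(r,v)\in\hat\Dd : F_{n,m}(r,v)<1/k\},
\end{equation*}
which is Borel. Measurability of $\Dd^*$ follows by the identical argument (with $v\equiv 0$), or by noting $\Dd^* = \{r\in\Dd : (r,0)\in\hat\Dd^*\}$ together with the measurability of the inclusion $r\mapsto(r,0)$.

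The only delicate point is the legitimacy of the coupling representation of $\dP^{X\times\R_+}(\mu_n,\mu_m)$ when the ambient space $X\times\R_+$ itself depends on $r$. This is not really an obstacle: the coupling characterization~\reff{Sampl:eq:Proh-coupling} is valid in any complete separable metric space, and the resulting formula only involves the pairwise distances between the support points of $\mu_n$ and $\mu_m$, which are explicit expressions in $(r,v)$. So there is no circularity, and the function $F_{n,m}$ is well-defined and measurable as a function on $\hat\Dd$.
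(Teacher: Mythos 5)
Your proposal is correct and follows essentially the same route as the paper: both reduce membership in $\hat\Dd^*$ to the Cauchy property of the empirical measures $n^{-1}\sum_{i=1}^n\delta_{(i,v(i))}$ in the Prohorov metric on the complete separable space $X\times\R_+$, express the Prohorov distances between these finitely supported measures as measurable functions of finitely many coordinates of $(r,v)$, and conclude by countable unions and intersections. The only difference is cosmetic: you render the distance measurable via the coupling characterization \reff{Sampl:eq:Proh-coupling} (and in fact an infimum of functions continuous in $(r,v)$ already gives upper semicontinuity, hence measurability), whereas the paper uses the closed-set definition \reff{eq:Proh} directly, writing each set $\hat\Dd_{\ep,\ell,n}$ as a finite intersection over subsets of the support.
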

\begin{proof}
We represent $\hat\Dd^*$ by countable unions and intersections of measurable sets. The assertion on $\Dd$ follows along the same lines by removing the marks $v$.

For $(r,v)\in\hat\Dd$, let $(X,r)$ be the metric completion of $(X,r)$. We endow the product space $X\times\R_+$ with the metric $d^{X\times\R_+}((x,v),(x',v'))=r(x,v)\vee|v-v'|$ and define for $n\in\N$ the probability measures $m^n=n^{-1}\sum_{i=1}^n\delta_{(i,v(i))}$ on $X\times\R_+$.
The assertion $(r,v)\in\hat\Dd^*$ is equivalent to the assertion that $(m^n,n\in\N)$ is a Cauchy sequence with respect to the Prohorov metric on $X\times\R_+$.
Hence,
\[\hat\Dd^*=\bigcap_{\ep\in\Q\cap(0,\infty)}\bigcup_{k\in\N}\bigcap_{\ell\geq n\geq k}\hat\Dd_{\ep,\ell,n}\]
with
\[\hat\Dd_{\ep,\ell,n}:=\{(r,v)\in\hat\Dd:\dP(m^\ell,m^n)<\ep\},\]
where $m^\ell$ and $m^n$ are considered as probability measures on the (finite) support of $m^\ell$ in $X\times\R_+$.
Using the definition \reff{eq:Proh} of the Prohorov metric, we can write
\begin{align*}
\hat\Dd_{\ep,\ell,n}=\bigcap_{F\subset\{1,\ldots,\ell\}}\{\frac{1}{\ell}\sum_{i=1}^\ell&\I{\exists j\in F\text{ with }r(i,j)\vee|v(i)-v(j)|=0}\\
&<\frac{1}{n}\sum_{i=1}^n\I{\exists j\in F\text{ with }r(i,j)\vee|v(i)-v(j)|<\ep}+\ep\}.
\end{align*}
\end{proof}

\subsection{Resampling from marked metric measure spaces}
\label{sec:proofs:resampling}
We will use the statements from this section to prove assertions \ref{item:thm:repr:sampl} and \ref{item:thm:repr:det} of Theorem \ref{thm:repr}. In the end of this section, we also prove Proposition \ref{Sampl:prop:uniqueness} from Section \ref{Sampl:sec:repr}.

The following proposition can be compared with Lemma 8 of Vershik. We construct a marked metric measure space from a marked distance matrix. When we sample according to its marked distance matrix distribution, the assertion is that we arrive at a random variable that has the same distribution as the marked distance matrix with which we started.
Recall the functions $\hat\psi$, $\psi$ and the sets $\hat\Dd^*$, $\hat\Dd$ from Section \ref{Sampl:sec:mdm-mmm-new}.
\begin{prop}
\label{Sampl:prop:dm-mmm}
Let $(r,v)$ be an exchangeable random variable with values in $\hat\Dd^*$. Let $(r',v')$ be a random variable with values in $\Dd\times\R_+^\N$ and conditional distribution $\nu^{\hat\psi(r,v)}$ given $\hat\psi(r,v)$. Then $(r',v')$ and $(r,v)$ are equal in distribution. 
\end{prop}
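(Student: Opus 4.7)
The plan is to verify that $(r,v)$ and $(r',v')$ agree on all cylinder events, by showing $\E[\phi(\gamma_n(r,v))]=\E[\phi(\gamma_n(r',v'))]$ for every $n\in\N$ and every bounded continuous $\phi\colon\R^{n^2}\times\R^n\to\R$. Since such functions separate distributions on $\R^{\N^2}\times\R^{\N}$ and hence on $\hat\Dd$, this will yield the claim. The basic idea is to combine the exchangeability of $(r,v)$ with the convergence of empirical measures built into the definition of $\hat\Dd^*$: the first supplies enough symmetry to replace a deterministic restriction of $(r,v)$ by a uniform sub-sample, and the second lets the uniform sub-sample be identified asymptotically with an iid draw from the sampling measure $m$ of $\hat\psi(r,v)$.

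Fix $n$, a bounded continuous $\phi$, and $N\geq n$; let $(I_1,\dots,I_n)$ be uniform on the set of ordered $n$-tuples of distinct integers in $[N]$, independent of $(r,v)$. Any realization of $(I_1,\dots,I_n)$ extends to a permutation in $S_\infty$, so the exchangeability assumption gives
\[\E[\phi(\gamma_n(r,v))]=\E[\phi((r(I_k,I_\ell))_{k,\ell\in[n]},(v(I_k))_{k\in[n]})].\]
Now condition on $(r,v)$. Let $(X,r)$ be the metric completion of $(\N,r)$ and $\mu_N=N^{-1}\sum_{i=1}^N\delta_{(i,v(i))}$ the empirical measure on $X\times\R_+$. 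The inner expectation is the integral of the map $F\colon((x_k,w_k))_{k\in[n]}\mapsto\phi((r(x_k,x_\ell))_{k,\ell\in[n]},(w_k)_{k\in[n]})$ against the uniform distribution on ordered $n$-tuples of distinct atoms of $\mu_N$, which differs from $\int F\,d\mu_N^{\otimes n}$ by an error of order $n^2/N$. On the probability-$1$ event $\{(r,v)\in\hat\Dd^*\}$ the measures $\mu_N$ converge weakly on $X\times\R_+$ to the sampling measure $m$ of the representative $(X,r,m)$ of $\hat\psi(r,v)$ used in the definition of $\hat\psi$, so $\mu_N^{\otimes n}\to m^{\otimes n}$ weakly. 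Since $r$ is continuous on $X\times X$, the function $F$ is bounded and continuous on $(X\times\R_+)^n$, and the inner expectation converges a.\,s.\ to
\[\int m^{\otimes n}(d(x_1,w_1)\cdots d(x_n,w_n))\,F((x_k,w_k)_{k\in[n]})=\nu^{\hat\psi(r,v)}\phi.\]
Dominated convergence then yields $\E[\phi(\gamma_n(r,v))]=\E[\nu^{\hat\psi(r,v)}\phi]$, and the right-hand side equals $\E[\phi(\gamma_n(r',v'))]$ by the defining property of $(r',v')$.

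The main obstacle is that the metric space $X$, the empirical measures $\mu_N$, and the limit measure $m$ all depend on the random realization $(r,v)$, so the weak convergence assertion lives in a random ambient space. This is finessed by carrying out the weak-convergence step conditionally on $(r,v)$, so that for each realization we work in a fixed Polish space $X\times\R_+$ with a deterministic sequence of probability measures and a deterministic limit (the existence of $m$ on the event $\{(r,v)\in\hat\Dd^*\}$ is precisely what is encoded in the definition of $\hat\Dd^*$ from Subsection \ref{Sampl:sec:mdm-mmm-new}). The without/with-replacement comparison and the continuity of $F$ on $(X\times\R_+)^n$ (including at points where coordinates coincide, since $r$ vanishes on the diagonal) are then routine.
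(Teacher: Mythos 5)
Your argument is correct and is essentially the paper's own proof run in the opposite direction: both fix $n$ and a bounded continuous $\phi$, use exchangeability to symmetrize over index tuples, invoke the weak convergence of the empirical measures $N^{-1}\sum_{i\leq N}\delta_{(i,v(i))}$ to the sampling measure $m$ (the defining property of $\hat\Dd^*$), note that tuples with repeated indices are negligible in the limit, and conclude by dominated convergence. No substantive difference from the paper's argument.
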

\begin{rem}
\label{Sampl:rem:dm-mmm}
For an exchangeable random variable with values in $\Dd^*$ and a random variable $\rho'$ with conditional distribution $\nu^{\psi(\rho)}$ given $\psi(\rho)$, the random variables $\rho$ and $\rho'$ are equal in distribution. This follows from Proposition~\ref{Sampl:prop:dm-mmm}, we set $(r,v)=(\rho,0)$.
\end{rem}
\begin{proof}[Proof of Proposition~\ref{Sampl:prop:dm-mmm}]
Let $n\in\N$ and let $\phi:\R_+^{n^2}\times\R_+^n\to\R$ be bounded and continuous. Let $(X,r,m)$ be the representative of $\hat\psi(r,v)$ as in the definition of $\hat\psi$.
We have
\begin{align*}
\E\left[\phi\circ\gamma_n(r',v')\right]
& = \E\left[\int m^{\otimes n}(dx\;dv'')\phi((r(x(i),x(j)))_{i,j\in[n]},(v''(i))_{i\in[n]})\right]\\
& = \lim_{k\to\infty}\frac{1}{k}\sum_{\ell_1=1}^{k}
\cdots\frac{1}{k}\sum_{\ell_n=1}^{k}
\E\left[\phi((r(\ell_i,\ell_j))_{i,j\in[n]},(v(\ell_i))_{i\in[n]})\right]\\
& = \E\left[\phi\circ\gamma_n(r,v)\right].
\end{align*}
Here the assumption $(r,v)\in\hat\Dd^*$ ensures that $m$ is the weak limit of the uniform probability measures $\tfrac{1}{k}\sum_{\ell=1}^k\delta_{(\ell,v(\ell))}$ on $X\times\R_+$. This yields the second equality by dominated convergence. For the third equality, we use that summands where $\ell_1,\ldots, \ell_n$ are not pairwise distinct vanish in the limit, and that for all other summands, the expectation in the second line equals by exchangeability the expectation in the third line.
\end{proof}

In the next proposition, we start with a marked metric measure space and sample $(r,v)$ according to its marked distance matrix distribution. The marked metric measure space that we construct from any typical realization of $(r,v)$ turns out to be isomorphic to the marked metric measure space with which we started.
\begin{prop}
\label{Sampl:prop:mmm-mdm-mmm}
Let $\chi\in\hat\M$ and let $(r,v)$ be a $\Dd\times\R_+^\N$-valued random variable with distribution $\nu^{\chi}$. Then $(r,v)\in\hat\Dd^*$ a.\,s.\ and $\hat\psi(r,v)=\chi$ a.\,s.
\end{prop}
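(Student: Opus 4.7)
The plan is to pick a representative $(X,r_X,m)$ of the isomorphy class $\chi$ and realize $(r,v)$ as $((r_X(x(i),x(j)))_{i,j\in\N},(v(i))_{i\in\N})$, where $((x(i),v(i)))_{i\in\N}$ is an $m$-iid sequence in $X\times\R_+$. With this concrete model in place, both assertions will follow from the strong law of large numbers for empirical measures in a Polish space (Varadarajan's theorem) together with the observation that iid samples are a.s.\ dense in the support of the sampling measure.

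First I would observe, by separability of $\supp m$ and a Borel--Cantelli (or strong law) argument on a countable base of open sets in $\supp(m(\cdot\times\R_+))$, that a.s.\ the set $\{x(i):i\in\N\}$ is dense in $\supp(m(\cdot\times\R_+))$. Writing $Y:=\overline{\{x(i):i\in\N\}}\subset X$, this gives $\supp m\subset Y\times\R_+$, hence $m(Y\times\R_+)=1$ a.s. The map $i\mapsto x(i)$ is an isometry of $(\N,r)$ into $(X,r_X)$ that extends canonically to an isometry between the metric completion $\bar X$ of $(\N,r)$ and $Y$.

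Next I would invoke Varadarajan's theorem: the empirical measures $m_n:=n^{-1}\sum_{i=1}^n\delta_{(x(i),v(i))}$ converge weakly to $m$ on $X\times\R_+$ a.s. Since $m$ and every $m_n$ are concentrated on $Y\times\R_+$ a.s., this convergence descends to weak convergence in $Y\times\R_+$. Transported back along the isometry $Y\cong\bar X$, this is exactly the assertion that $n^{-1}\sum_{i=1}^n\delta_{(i,v(i))}$ converges weakly in $\bar X\times\R_+$; hence $(r,v)\in\hat\Dd^*$ a.s., and $(\bar X,r,m)$ is a representative of $\hat\psi(r,v)$. Since $m$-iid samples a.s.\ take values in $Y\times\R_+\cong\bar X\times\R_+$, the marked metric measure spaces $(\bar X,r,m)$ and $(X,r_X,m)$ have the same marked distance matrix distribution, so $\hat\psi(r,v)=\chi$ a.s.

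The only real obstacle, essentially a bookkeeping point, is the careful identification of the completion $\bar X$ with the subset $Y$ of $X$ that carries $m$, so that the weak convergence supplied by Varadarajan's theorem on $X\times\R_+$ genuinely yields weak convergence in the smaller space on which $\hat\psi$ operates. Once this identification is in place, the definition of isomorphism in $\hat\M$ gives $\hat\psi(r,v)=\chi$ immediately.
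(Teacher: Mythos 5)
Your proposal is correct and follows essentially the same route as the paper's proof: realize $(r,v)$ by $m$-iid sampling from a representative, use a.s.\ density of the samples to extend $i\mapsto x(i)$ to an isometry between the completion of $(\N,r)$ and the relevant part of $X$, and invoke the a.s.\ weak convergence of empirical measures (Varadarajan/Glivenko--Cantelli) pushed forward along this isometry to get $(r,v)\in\hat\Dd^*$ and $\hat\psi(r,v)=\chi$. The only cosmetic difference is that the paper assumes w.l.o.g.\ that $\supp m'(\cdot\times\R_+)$ is all of $X'$, whereas you keep the ambient space and restrict to the support $Y$; the substance is identical.
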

\begin{rem}
\label{Sampl:rem:mmm-mdm-mmm}
Proposition~\ref{Sampl:prop:mmm-mdm-mmm} is essentially Vershik's proof \cite{Vershik02}*{Theorem 4} of the Gromov reconstruction theorem (where metric measure spaces are considered, cf.\ also \cite{DGP11}*{Theorem 1} for marked metric measure spaces).
The present formulation focuses on the map $\hat\psi$ that will be used in the proofs of Theorems~\ref{thm:repr}\ref{item:thm:repr:det} and \ref{Sampl:thm:tv} below.
\end{rem}
\begin{rem}
\label{Sampl:rem:mm-dm-mm}
For $\chi\in\M$ and a $\Dd$-valued random variable $\rho$ with distribution $\nu^\chi$, Proposition~\ref{Sampl:prop:mmm-mdm-mmm} implies $\rho\in\Dd^*$ a.\,s.\ and $\psi(\rho)=\chi$ a.\,s. 
\end{rem}
\begin{proof}[Proof of Proposition~\ref{Sampl:prop:mmm-mdm-mmm}]
Let $(X',r',m')$ be a representative of $\chi$. W.\,l.\,o.\,g.\ we assume that the closed support of the probability measure $m'(\cdot\times\R_+)$ is the whole space $X'$, and that $(r,v)=((r'(x(i),x(j)))_{i,j\in\N},v)$ for an $m'$-iid sequence $(x,v)$. We denote by $(X,r)$ the completion of $(\N,r)$.
We endow $X'\times\R_+$ with the product metric $d^{X'\times\R_+}((x'_1,v'_1),(x'_2,v'_2))=r'(x'_1,x'_2)\vee|v'_1-v'_2|$, and analogously $X\times\R_+$.
As the sequence $(x(i))_{i\in\N}$ is a.\,s.\ dense in $X'$, the isometry that maps $x(i)$ to $i$ for all $i\in\N$ can a.\,s.\ be extended to a (surjective) isometry $\varphi$ from $X'$ to $X$. An isometry $\hat\varphi$ from $X'\times\R_+$ to $X\times\R_+$ is a.\,s.\ given by $(x,v')\mapsto(\varphi(x),v')$. By the Glivenko-Cantelli theorem, the probability measures $m'^n:=n^{-1}\sum_{i=1}^n\delta_{(x(i),v(i))}$ on $X'\times\R_+$ converge weakly to $m'$ a.\,s. As $\hat\varphi$ is continuous, the probability measures $m^n:=n^{-1}\sum_{i=1}^n\delta_{(i,v(i))}=\hat\varphi(m'^n)$ on $X\times\R_+$ converge weakly to $m:=\hat\varphi(m')$ a.\,s. This implies $(r,v)\in\Dd^*$ a.\,s.\ and that $\hat\psi(r,v)$ equals the isomorphy class of $(X,r,m)$ a.\,s. The second assertion follows as $\hat\varphi$ is a.\,s.\ a measure-preserving isometry from $X'\times\R_+$ to $X\times\R_+$, which implies that $(X',r',m')$ and $(X,r,m)$ have a.\,s.\ the same marked distance matrix distribution.
\end{proof}

\begin{rem}[Marked metric measure spaces and weighted real trees]
\label{Sampl:rem:mmm-tree-erg}
Let $\chi\in\hat\U$, and let $(r,v)$ be a $\hat\Uu$-valued random variable with the marked distance matrix distribution of $\chi$. By Proposition \ref{Sampl:prop:mmm-mdm-mmm}, we have $(r,v)\in\hat\Dd^*$ a.\,s., hence we can associate with any typical realization of $(r,v)$ a complete and separable weighted real tree $(\bar T,d,\mu)$ as in Remark \ref{Sampl:rem:psi-families}.
As in Proposition \ref{prop:mmm-ergodic}, the random marked distance matrix $(r,v)$ is ergodic with respect to the action of the group of finite permutations. This yields that the measure-preserving isometry class of the weighted real tree $(\bar T,d,\mu)$ is an a.\,s.\ constant random variable. Its typical realization can be associated with $\chi$.
\end{rem}
\begin{proof}[Proof of Proposition \ref{Sampl:prop:uniqueness}]
Let $(r,v)$ be a random variable with conditional distribution $\nu^\chi$ given $\chi$. Then we can assume $\rho=\alpha(r,v)$. Propositions~\ref{Sampl:prop:sampl-unique} and \ref{Sampl:prop:mmm-mdm-mmm} imply $\chi=\hat\psi\circ\beta(\rho)$ a.\,s. Hence, the distribution of $\rho$ determines the distribution of $\chi$ uniquely, which is the ``only if'' assertion.
The other direction clearly holds as the distribution of $\chi$ determines the distribution of $\rho$ uniquely.
\end{proof}

\subsection{Proof of the sampling representation}
\label{Sampl:sec:proof:sampling}
We give two proofs of Theorem \ref{thm:repr}\ref{item:thm:repr:sampling-psi} from Section \ref{Sampl:sec:repr} that build on a common part, namely statement \reff{Sampl:eq:Claim} below. The plan for the first proof is the following: We partition the completion of the tree $(T,d)$ associated with the semi-ultrametric $\rho$ (as in Remark \ref{Sampl:rem:ultrametr}) into small subsets. Into each of these subsets, we lay an atom whose mass is given by the asymptotic frequency of those integers that label the leaves of $T$ that are the endpoints of the external branches that begin in this subset. By exchangeability, these asymptotic frequencies exist, and \reff{Sampl:eq:Claim} yields that they add up to one.
We obtain an atomic probability measure on the product space of the metric completion of the tree and the mark space $\R_+$ by defining the $\R_+$-component as the distance to the top of the coalescent tree. Using the coupling characterization \reff{Sampl:eq:Proh-coupling} of the Prohorov metric, we show that this probability measure converges as the subsets become infinitely small, and that the limit measure coincides with the limit of the uniform measures in the definition of $\hat\Dd^*$.

As a slight difference to the description in the preceding paragraph, we will work with the space $(X,r)$ that corresponds to the completion of the space only of the starting vertices of the external branches, but we will occasionally recall the relation to the whole tree. We will use definitions also from Section \ref{Sampl:sec:dist-dec}.

\begin{proof}[Proof of Theorem \ref{thm:repr}]
Let $(r,v)=\beta(\rho)$. Then $v=\Upsilon(\rho)$ by definition of the map $\beta$.
Let $(X,r)$ be the metric completion of the semi-metric space $(\N,r)$. 

Let $\ep>0$. As the distribution of the random variable $v(i)$ has at most countably many atoms, there exists a deterministic sequence $0<h^{(\ep)}_1<h^{(\ep)}_2<\ldots$ that increases to infinity and that satisfies
\[h^{(\ep)}_1<\ep,\quad h^{(\ep)}_{n+1}-h^{(\ep)}_n<\ep,\]
and
\begin{equation}
\label{Sampl:eq:massless}
\P(v(i)=h^{(\ep)}_n)=0
\end{equation}
for all $i,j,n\in\N$. We set $h^{(\ep)}_0=0$ and we write $I^\ep_n=[h^{(\ep)}_{n-1},h^{(\ep)}_n)$ for $n\in\N$.

We define an equivalence relation $\sim^\ep$ on $\N$ such that two distinct integers $i,j$ are equivalent if and only if there exists $n\in\N$ with
\[v(i),v(j),\tfrac{1}{2}\rho(i,j)\in I^\ep_n.\]
To show transitivity, we consider $i,j,k\in\N$ with $i\neq k$, $i\sim^\ep j$, and $j\sim^\ep k$. Then there exists $n\in\N$ with $v(i),v(j),v(k),\rho(i,j)/2,\rho(j,k)/2\in I^\ep_n$. As
\[v(i)\leq\rho(i,k)/2\leq(\rho(i,j)\vee\rho(j,k))/2\]
by definition of $\Upsilon$ and ultrametricity, it follows that $i\sim^\ep k$.

Note that the definitions in Section \ref{Sampl:sec:dist-dec} imply
\begin{equation}
\label{Sampl:eq:r-ep}
r(i,j)=(\tfrac{1}{2}\rho(i,j)-v(i)+\tfrac{1}{2}\rho(i,j)-v(j))\I{i\neq j}<2\ep
\end{equation}
for $i\sim^\ep j$. (That is, in the context of Remark \ref{Sampl:rem:um-tree}, the starting points of external branches that end in leaves $(0,i)$, $(0,j)$ of $T$ with $i\sim^\ep j$ have distance smaller than $2\ep$.)

In the next two paragraphs, we prove the following claim: 
\begin{equation}
\label{Sampl:eq:Claim}
\text{A.\,s., the partition of $\N$ given by $\sim^\ep$ contains no singleton blocks.}
\end{equation}

For each $i,n\in\N$ the sequence
$(\I{v(j)\in I^\ep_n,\rho(i,j)/2\in I^\ep_n},j\in\N\setminus\{i\})$ is exchangeable. By the de Finetti theorem, it is conditionally iid.
Hence, on the event that there exists $j\in\N\setminus\{i\}$ with $v(j)\in I^\ep_n$ and $\rho(i,j)/2\in I^\ep_n$, there exists a.\,s.\ another (in fact, infinitely many) such $j$ in $\N\setminus\{i\}$.

For $j\in\N$, the definition of $\Upsilon$ and  condition~\reff{Sampl:eq:massless} imply the existence of (random) $n\in\N$ and $i\in\N\setminus\{j\}$ such that $v(j)\in I^\ep_n$ and $\rho(i,j)/2\in I^\ep_n$ a.\,s. As shown in the preceding paragraph, there exists a.\,s.\ an integer $k\in\N\setminus\{i,j\}$ with $v(k)\in I^\ep_n$ and $\rho(i,k)/2\in I^\ep_n$. From
\[v(k)\leq\rho(j,k)/2\leq(\rho(i,j)\vee\rho(i,k))/2,\]
it follows that $\rho(j,k)/2\in I^\ep_n$ a.\,s. This proves \reff{Sampl:eq:Claim}.

Now we show that the asymptotic frequencies exist and add up to one. For $A\subset\N$ and $k\in\N$, we denote the relative frequency by $|A|_k=k^{-1}\#(A\cap[k])$ and the asymptotic frequency by $|A|=\lim_{k\to\infty}|A|_k$, provided the limit exists. As the random partition given by $\sim^\ep$ is exchangeable, the asymptotic frequencies of its blocks exist a.\,s.\ by Kingman's correspondence.
Let $B^\ep(i)$ denote the equivalence class of $i\in\N$ with respect to $\sim^\ep$, and let
\[M^\ep=\{j\in\N: j=\min\,B^\ep(i)\text{ for some }i\in\N\}\]
be the set of minimal elements of the equivalence classes of $\sim^\ep$.
As the exchangeable partition given by $\sim^\ep$ has no singleton blocks a.\,s., it has proper frequencies by Kingman's correspondence, that is,
\[\sum_{i\in M^\ep}|B^\ep(i)|=1\quad\text{a.\,s.}\]
Consequently, on an event of probability $1$, a probability measure $m^\ep$ on the product sigma algebra on $X\times\R_+$ is given by
\begin{equation}
\label{Sampl:eq:def-mep}
m^\ep=\sum_{i\in M^\ep}|B^\ep(i)|\delta_{(i,v(i))}.
\end{equation}
(Into each of the subsets of $(X,r)$ given by $\sim^\ep$, the first component of the measure $m^\ep$ lays an atom with mass given by the asymptotic frequency of the integers that label the corresponding leaves in $T$.)

Let $\ep_1>\ep_2>\ldots>0$ with $\lim_{\ell\to\infty}\ep_\ell=0$.
For each $\ell\in\N$, we replace $\ep$ with $\ep_\ell$ everywhere in this proof until now, and we use the notations introduced so far.
We also assume that for $k\leq\ell$, the sequence $(h^{(\ep_k)}_n,n\in\N)$ is contained in $(h^{(\ep_\ell)}_n,n\in\N)$. That is, the partitions $\{I^{\ep_k}_n,n\in\N\}$ of $\R_+$ are nested.

For $k\leq \ell$ and $i\in M^{\ep_k}$, let $i_1,i_2,\ldots$ be an enumeration of $M^{\ep_\ell}\cap B^{\ep_k}(i)$. Then
\[B^{\ep_k}(i)=B^{\ep_\ell}(i_1)\uplus B^{\ep_\ell}(i_2)\uplus\ldots.\]
By Fatou's lemma and as a.\,s., the partition given by $\sim^{\ep_\ell}$ has proper frequencies, it follows that
\begin{equation*}
\label{eq:af-prop}
|B^{\ep_k}(i)|=|B^{\ep_\ell}(i_1)|+|B^{\ep_\ell}(i_2)|+\ldots\quad\text{a.\,s.}
\end{equation*}
Using equation \reff{Sampl:eq:def-mep}, we deduce
\begin{equation}
\label{eq:K-m-ell-k}
m^{\ep_k}\{(i,v(i))\}=\sum_{j\in B^{\ep_k}(i)}m^{\ep_\ell}\{(j,v(j))\}\quad\text{a.\,s.}
\end{equation}
A.\,s., a coupling of $m^{\ep_k}$ and $m^{\ep_\ell}$ is given by the probability measure
\begin{equation}
\label{eq:def-K}
K=\sum_{(i,j)}m^{\ep_\ell}\{(j,v(j))\}\delta_{((i,v(i)),(j,v(j)))}
\end{equation}
on $(X\times\R_+)^2$, where the sum is over all pairs $(i,j)$ with $i\in M^{\ep_k}$ and $j\in M^{\ep_\ell}\cap B^{\ep_k}(i)$.
Indeed, as equation~\reff{eq:K-m-ell-k} implies
\[K(\{(i,v(i))\}\times(X\times\R_+))=\sum_{j\in B^{\ep_k}(i)}m^{\ep_\ell}\{(j,v(j))\}=m^{\ep_k}\{(i,v(i))\}\quad\text{a.\,s.,}\]
$K$ is a.\,s.\ a coupling of $m^{\ep_k}$ and $m^{\ep_\ell}$.

In words, the probability measure $m^{\ep_\ell}$ can be obtained by splitting each atom of $m^{\ep_k}$ into fragments. Let us sample a point $(j,v(j))$ according to $m^{\ep_\ell}$, and let $(i,v(i))$ be the point such that the atom of $m^{\ep_\ell}$ at $(j,v(j))$ is one of the fragments of the atom of $m^{\ep_k}$ at $(i,v(i))$. Then the pair $((i,v(i)),(j,v(j)))$ has distribution $K$.

For every pair $(i,j)$ that appears in the sum in equation \reff{eq:def-K}, we have $i\sim^{\ep_k}j$, hence $\left|v(i)-v(j)\right|<\ep_k$ and $r(i,j)<2\ep_k$.
Hence, the coupling characterization of the Prohorov metric~\reff{Sampl:eq:Proh-coupling} yields
\begin{equation}
\label{Sampl:eq:sampl-dP-ep}
\dP^{X\times\R_+}(m^{\ep_k},m^{\ep_\ell})\leq 2\ep_k
\end{equation}
a.\,s.\ for all $k\leq \ell$, when $X\times\R_+$ is endowed with the product metric $d^{X\times\R_+}$ that is given by $d^{X\times\R_+}((x,v),(x',v'))=r(x,x')\vee|v-v'|$. As a consequence, on an event of probability $1$, the sequence $(m^{\ep_\ell},\ell\in\N)$ in the space of probability measures on the complete space $X\times\R_+$ is Cauchy, we denote its limit by $m$.

Consider for $n,\ell\in\N$ also the probability measure $m^{\ep_\ell}_n$ on $X\times\R_+$, given by
\[m^{\ep_\ell}_n
=\sum_{i\in M^{\ep_\ell}}|B^{\ep_\ell}(i)|_n\delta_{(i,v(i))}\quad\text{a.\,s.}\]
As there exists a.\,s.\ a coupling $K'$ of the probability measures $m^{\ep_\ell}_n$ and $m^{\ep_\ell}$ with
\[K'\{(y,y)\}=m^{\ep_\ell}_n\{y\}\wedge m^{\ep_\ell}\{y\}
\]
for all $y\in X\times\R_+$, the coupling characterization of the Prohorov metric~\reff{Sampl:eq:Proh-coupling} implies for each $k\in\N$
\begin{align*}
&\dP^{X\times\R_+}(m^{\ep_\ell}_n,m^{\ep_\ell})\\
&\leq K'\{(y,y')\in(X\times\R_+)^2:y\neq y'\}\\
&\leq m^{\ep_\ell}\{(j,v(j)):j\in M^{\ep_\ell},j>k\}
+K'\{((i,v(i)),(j,v(j))):i,j\in M^{\ep_\ell},i\neq j, j\leq k\}\\
&=\sum_{\substack{j\in M^{\ep_\ell}\\j>k}}|B^{\ep_\ell}(j)|
+\sum_{\substack{j\in M^{\ep_\ell}\\j\leq k}}\big||B^{\ep_\ell}(j)|_n-|B^{\ep_\ell}(j)|\big|\quad\text{a.\,s.}
\end{align*}
Letting first $n$ and then $k$ tend to infinity, we deduce
\begin{equation}
\label{Sampl:eq:sampl-conv-ep-k}
\lim_{n\to\infty}\dP^{X\times\R_+}(m^{\ep_\ell}_n,m^{\ep_\ell})=0\quad\text{a.\,s.}
\end{equation}

Moreover, we define for each $n\in\N$ the probability measure
\[m_n=n^{-1}\sum_{i=1}^n\delta_{(i,v(i))}\]
on $X\times\R_+$.
(The first component corresponds to a probability measure on the starting vertices of the external branches that end in one of the first $n$ leaves of $T$. These starting vertices are weighted according to the number of the corresponding leaves $i$, where we count leaves $i, j$ with $\rho(i,j)=0$ as separate leaves.)
By~\reff{Sampl:eq:sampl-dP-ep},
\begin{equation}
\label{Sampl:eq:sampl-dP-ep-k}
\dP^{X\times\R_+}(m^{\ep_\ell}_n,m_n)\leq 4\ep_\ell\quad\text{a.\,s.}
\end{equation}
for all $n,\ell\in\N$. From~\reff{Sampl:eq:sampl-dP-ep},~\reff{Sampl:eq:sampl-conv-ep-k}, and ~\reff{Sampl:eq:sampl-dP-ep-k}, we obtain
\begin{equation}
\label{Sampl:eq:conv-emp}
m=\text{w-}\lim_{n\to\infty}m_n\quad\text{a.\,s.}
\end{equation}
This shows assertion \ref{item:thm:repr:sampling-psi}.

Assertion \ref{item:thm:repr:sampl} follows from assertion \ref{item:thm:repr:sampling-psi} and Proposition \ref{Sampl:prop:dm-mmm}.
Proposition~\ref{Sampl:prop:mmm-mdm-mmm} implies assertion \ref{item:thm:repr:det}.
\end{proof}

The idea for the second proof of Theorem \ref{thm:repr}\ref{item:thm:repr:sampling-psi} is to construct directly by the de Finetti theorem a sampling measure on a subspace of the metric completion of the coalescent tree associated with $\rho$. To this aim, we fix by conditioning the closure of the subspace of the starting vertices of the external branches that end in the leaves labeled by the odd integers. By \reff{Sampl:eq:Claim}, this subspace contains a.\,s.\ the sequence of the starting vertices of the external branches associated with the even integers, and this sequence is exchangeable.
For a related result, see also Forman, Haulk, and Pitman \cite{FHP15}, where trees are embedded into $\ell_1$.

\begin{rem}
\label{Sampl:rem:EGW}
The second proof given below goes in a direction that is similar to the argument in Section 7 of \cite{EGW15} for the construction of the sampling measure $\mu$ on the real tree $\mathbf{S}=\Gamma(\mathbf{T})$.
That the equality $\Gamma(\mathbf{T})=\Gamma(\mathbf{T}^-)=\Gamma(\mathbf{T}^+)$ on p.\,268 in \cite{EGW15} holds for the embedding of $\Gamma(\mathbf{T}^-)$ and $\Gamma(\mathbf{T}^+)$ into $\Gamma(\mathbf{T})$ can be seen from \reff{Sampl:eq:Claim} as in the proof below as $\Gamma(\mathbf{T})$, $\Gamma(\mathbf{T}^-)$, and $\Gamma(\mathbf{T}^+)$ then correspond to $X$, $X_1$, and $X_2$ therein. The real tree $\Gamma(\mathbf{T}^-)$ can then be endowed with a measure like $X_1$ is endowed with $\mu^1$. Note that the starting vertices of the external branches and the subtree spanned by them are called the points of attachment and the core, respectively, in \cite{EGW15}.

We remark that the second last paragraph of the proof below shows that the isomorphy class of the weighted real tree $(\mathbf{S},\mu)$ is a.\,s.\ equal to $\psi(r)$ where $(r,v)=\beta(d)$ and $d$ is the exchangeable ultrametric on $\N$ from \cite{EGW15}*{Section 7}, which corresponds to $\rho$ below. This equality can also be deduced from Theorem~\ref{thm:repr}, Remark~\ref{Sampl:rem:psi-hatpsi}, as $\psi(r)$ is a.\,s.\ constant by the ergodicity assumption in \cite{EGW15}, and from the Gromov reconstruction theorem.
\end{rem}
\begin{proof}[Second proof of Theorem \ref{thm:repr}\ref{item:thm:repr:sampling-psi}]
Let $(r,v)=\beta(\rho)$. We construct the first component of the sampling measure, showing $r\in\Dd^*$ a.\,s.

We denote by $\N_1$ the odd, and by $\N_2$ the even integers. Let $(X,r)$ denote the metric completion of $(\N,r)$. A.\,s.\ by \reff{Sampl:eq:r-ep} and \reff{Sampl:eq:Claim}, there exists for each $i\in\N_2$ an integer $j\in\N_1$ with $r(i,j)<2\ep$. As $\ep$ can be chosen arbitrarily small, it follows that $i$ is a.\,s.\ contained in the closure $X_1$ of the subset $\N_1$ of $(X,r)$ a.\,s., hence $X_1=X$ a.\,s. (Recall from Remark \ref{Sampl:rem:um-tree} that $\N$ corresponds here to the set of starting vertices of the external branches in the coalescent tree $(T,d)$ associated with $\rho$.)

For $i\in\N_1$, let
\begin{equation*}
v^1(i)=\tfrac{1}{2}\inf_{j\in\N_1\setminus\{i\}}\rho(i,j).
\end{equation*}
(This is the length of the external branch that ends in the leaf $(0,i)$ in the subtree spanned by the leaves with labels in $\N_1$.)
By exchangeability of the sequence $(\rho(i,j):j\in\N\setminus\{i\})$ and by definition of $v=\Upsilon(\rho)$, it follows that $v^1(i)=v(i)$ a.\,s.
Let $\rho^1=(\rho(i,j))_{i,j\in\N_1}$ be the restriction of $\rho$ to $\N_1$. We define the random variable $r^1=(r^1(i,j))_{i,j\in\N_1}$ by
\[r^1(i,j)=(\rho^1(i,j)-v^1(i)-v^1(j))\I{i\neq j}.\]
By definition of $r$ in Section \ref{Sampl:sec:dist-dec}, it follows that $r^1=(r(i,j))_{i,j\in\N_1}$ a.\,s.

Let $\Lambda$ be a regular conditional distribution of $\rho$ given $\rho^1$. Then for a.\,a.\ $\rho^1$, under $\Lambda(\rho^1,\cdot)$, the complete and separable metric space $(X_1,r)$ is a.\,s.\ constant as $r^1$ is $\rho^1$-measurable.

Moreover, the sequence $2,4,6,\ldots$ of the even integers, viewed as a sequence in $(X_1,r)$, is exchangeable under $\Lambda(\rho^1,\cdot)$ for a.\,a.\ $\rho^1$. To see this, we use that the Borel sigma algebra on $(X_1,r)$ is generated by the balls around the elements of $\N_1\subset X_1$. Let $n\in\N$, and let $B_2,\ldots,B_{2n}$ be some finite intersections of such balls. Note that $\{2\in B_2,\ldots,2n\in B_{2n}\}$ can be written as an intersection of events of the form $\{\rho(i,j)<c\}$, where $i\in\N_2$, $j\in\N_1$ and $c\in(0,\infty)$. Using this, the uniqueness lemma, and the elementary fact that the conditional distribution of $\rho$ given its restriction $\rho^1$ is invariant under permutations that leave $\N_1$ fixed, we obtain the claimed exchangeability.

For this exchangeable sequence, the de Finetti theorem yields, $\Lambda(\rho^1,\cdot)$-a.\,s.\ for a.\,a.\ $\rho^1$, a sampling measure $\mu^1$ on $(X_1,r)$ that is the weak limit of the probability measures $\mu^1_n:=n^{-1}\sum_{i=1}^n\delta_{2i}$ on $(X_1,r)$.
By the same argument as above, also the closure $X_2$ of the subset $\N_2$ in $(X,r)$ equals $X$ a.\,s.
On the event of probability $1$ on which $\N_2$ is a dense subset of $X_2=X=X_1$, an isometry $\varphi:X_1\to X_2$ is given by $\varphi(i)=i$ for $i\in\N_2$. As also the weak limit of the image measures $\varphi(\mu^1_n)$ on $(X_2,r)$ exists a.\,s., we have shown $(r(2i,2j))_{i,j\in\N}\in\Dd^*$ a.\,s. This implies $r\in\Dd^*$ a.\,s.\ as $r$ and $(r(2i,2j))_{i,j\in\N}$ are equal in distribution by exchangeability of $r$.

That $(r,v)\in\hat\Dd^*$ can be shown analogously by considering the sequence $(i,v(i))_{i\in\N_2}$ in the space $X_1\times\R_+$ which we endow with the metric $d^{X_1\times\R_+}((x',v'),(x'',v''))=r(x',x'')\vee|v'-v''|$.
\end{proof}

\subsection{Proof of Theorem \ref{Sampl:thm:tv}}
\label{sec:proof:tv}
The following property is central in the proof of Theorem~\ref{Sampl:thm:tv}.
\begin{prop}
\label{Sampl:prop:eq-RP}
Let $t\in\R_+$, and let $f:\Uu\to\R$, $g:\hat\Uu\to\R$ be bounded measurable functions. If the assumptions of Theorem~\ref{Sampl:thm:tv} hold, then
\[\E[g(\beta(\rho_t))]=\E[\nu^{\hat\chi_t}g]\]
and
\[\E[f(\rho_t)]=\E[\xi_tf].\]
If the assumptions of Theorem~\ref{Sampl:thm:tv} hold and $\rho_t$ is a.\,s.\ dust-free, then
\[\E[f(\rho_t)]=\E[\nu^{\chi_t}f].\]
\end{prop}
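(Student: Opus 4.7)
The plan is to derive all three identities by conditioning on the random isomorphy class that appears in the sampling representation, combining Theorem \ref{thm:repr} and Corollary \ref{cor:repr} with the ``resampling'' identity Proposition \ref{Sampl:prop:dm-mmm}. The common scheme is: in each case, the exchangeability of $\rho_t$ permits us to replace the law of $\rho_t$ (resp.\ $\beta(\rho_t)$) by a two-stage sample from the associated random (marked) metric measure space, and then iterated expectation gives the claim.

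I would do the second identity first since it is the most direct application of Theorem \ref{thm:repr}. By the standing assumption of Theorem \ref{Sampl:thm:tv}, $\rho_t$ is exchangeable; hence Theorem \ref{thm:repr}\ref{item:thm:repr:sampl} furnishes a $\Uu$-valued random variable $\rho'$ with conditional distribution $\alpha(\nu^{\hat\chi_t})=\xi_t$ given $\hat\chi_t$ and satisfying $\rho_t\stackrel{d}{=}\rho'$. Then
\[
\E[f(\rho_t)]=\E[f(\rho')]=\E\bigl[\E[f(\rho')\mid\hat\chi_t]\bigr]=\E[\xi_t f].
\]
The third identity is handled in the same way using Corollary \ref{cor:repr}\ref{item:cor:repr-nd:sampl} in place of Theorem \ref{thm:repr}\ref{item:thm:repr:sampl}, the dust-freeness hypothesis being exactly what Corollary \ref{cor:repr} requires; alternatively one may invoke identity 2 together with Remarks \ref{rem:sampling-dust-free} and \ref{Sampl:rem:tv-df}, which give $\xi_t=\alpha(\nu^{\hat\chi_t})=\nu^{\chi_t}$ a.\,s.\ in the dust-free case.

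For the first identity I will apply Proposition \ref{Sampl:prop:dm-mmm} to the random variable $(r_t,v_t):=\beta(\rho_t)$. Two hypotheses need to be checked. The first, $\beta(\rho_t)\in\hat\Dd^*$ a.\,s., is exactly Theorem \ref{thm:repr}\ref{item:thm:repr:sampling-psi}. The second is exchangeability of $\beta(\rho_t)$, which I would derive from the equivariance $\beta\circ p=p\circ\beta$ for every $p\in S_\infty$; this is immediate from the definitions of $\Upsilon$ and of $r$ in Section~\ref{Sampl:sec:dist-dec}, since
\[
\Upsilon(p(\rho))(i)=\tfrac{1}{2}\inf_{j\neq i}\rho(p(i),p(j))=\Upsilon(\rho)(p(i)),
\]
and an analogous identity for the $r$-component follows from the formula $r(i,j)=(\rho(i,j)-v(i)-v(j))\I{i\neq j}$. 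Thus $\beta(\rho_t)$ is exchangeable because $\rho_t$ is. Proposition \ref{Sampl:prop:dm-mmm} then produces a random variable $(r',v')$ with conditional distribution $\nu^{\hat\psi(\beta(\rho_t))}=\nu^{\hat\chi_t}$ given $\hat\chi_t$ and with $(r',v')\stackrel{d}{=}\beta(\rho_t)$. Conditioning on $\hat\chi_t$ yields
\[
\E[g(\beta(\rho_t))]=\E[g(r',v')]=\E\bigl[\E[g(r',v')\mid\hat\chi_t]\bigr]=\E[\nu^{\hat\chi_t}g].
\]

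There is no real obstacle: the main thing to be careful about is matching the definitions correctly, in particular recognizing that $\hat\chi_t=\hat\psi(\beta(\rho_t))$ is exactly the isomorphy class that the representation theorems attach to $\rho_t$, so that the conditional laws produced by those theorems can legitimately be written as $\alpha(\nu^{\hat\chi_t})$, $\nu^{\hat\chi_t}$, or $\nu^{\chi_t}$ as needed. The equivariance of $\beta$ under $S_\infty$ is the only small verification that is not already packaged in an earlier result.
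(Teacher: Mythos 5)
Your proposal is correct and follows essentially the paper's own route: the paper proves Proposition~\ref{Sampl:prop:eq-RP} by a one-line appeal to Theorem~\ref{thm:repr}, the definition of $\xi_t$, and Corollary~\ref{cor:repr}, and your argument is just the explicit unpacking of that conditioning. Your treatment of the first identity via Proposition~\ref{Sampl:prop:dm-mmm} (with the small verification that $\beta$ is $S_\infty$-equivariant, so $\beta(\rho_t)$ is exchangeable, and $\beta(\rho_t)\in\hat\Dd^*$ a.\,s.\ by Theorem~\ref{thm:repr}\ref{item:thm:repr:sampling-psi}) is the same chain of results the paper itself uses to prove Theorem~\ref{thm:repr}\ref{item:thm:repr:sampl}, so it is a faithful, correct elaboration rather than a different method.
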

\begin{proof}
This is immediate from Theorem~\ref{thm:repr}, the definition of $\xi_t$, and Corollary~\ref{cor:repr}.
\end{proof}
\begin{rem}
\label{Sampl:rem:intertwining}
In the context of Theorem~\ref{Sampl:thm:tv}\ref{Sampl:item:thm:tv:mmm}, let $(P_t,t\in\R_+)$ denote the semigroup on $\Mb(\hat\Uu)$ of the Markov process $(\beta(\rho_t),t\in\R_+)$, and let $(Q_t,t\in\R_+)$ denote the semigroup on $\Mb(\hat\U)$ of the Markov process $(\hat\chi_t,t\in\R_+)$.
Let $K$ denote the probability kernel from $\hat\U$ to $\hat\Uu$, given by $K(\chi,\cdot)=\nu^\chi$ for $\chi\in\hat\U$. Then Proposition~\ref{Sampl:prop:eq-RP} yields the intertwining relation
$Q_tK=KP_t$
which is condition (b) in \cite{RP81}*{Theorem 2}.
Many papers appeared on intertwining of Markov processes, a classical one is for instance \cite{CPY98}.
\end{rem}
\begin{proof}[Proof of Theorem~\ref{Sampl:thm:tv}]
We apply \cite{RP81}*{Theorem 2} to the semigroup of the Markov process $(\beta(\rho_t),t\in\R_+)$, the measurable map $\hat\psi:\hat\Uu\to\hat\U$, and the kernel $K$ from $\hat\U$ to $\hat\Uu$ given by $K(\chi,\cdot)=\nu^\chi$.
Clearly, Theorem 2 in \cite{RP81} also holds when the initial state $y$ therein is random. Then by Proposition~\ref{Sampl:prop:eq-RP},  condition (b) and the condition on the initial state in \cite{RP81}*{Theorem 2} are satisfied. Condition (a) in \cite{RP81}*{Theorem 2} follows from Proposition~\ref{Sampl:prop:mmm-mdm-mmm} as $f(\chi)=\nu^\chi(f\circ\hat\psi)$ for all $\chi\in\hat\U$ and all bounded measurable $f:\hat\U\to\R$. The Markov property of $(\hat\chi_t,t\in\R_+)$ now follows from \cite{RP81}*{Theorem 2}.

Now we give a proof that $(\hat\chi_t,t\in\R_+)$ solves the martingale problem $(\hat B,\hat\Pi)$.
If $(\beta(\rho_t),t\in\R_+)$ solves the martingale problem $(\hat A,\hat\C)$ in~\ref{Sampl:item:thm:tv:mmm}, then for all $\phi\in\hat\C$ with associated marked polynomial $\Phi$,
\begin{align}
0=&\E[\phi(\beta(\rho_t))-\phi(\beta(\rho_0))-\int_0^t\hat A\phi(\beta(\rho_u))du]\notag\\
=&\E[\nu^{\hat\chi_t}\phi]-\E[\nu^{\hat\chi_0}\phi]
-\int_0^t\E[\nu^{\hat\chi_u}(\hat A\phi)]du\notag\\
=&\E[\Phi(\hat\chi_t)-\Phi(\hat\chi_0)-\int_0^t \hat B\Phi(\hat\chi_u)du]\label{Sampl:eq:mp-chi-rv}
\end{align}
by Proposition~\ref{Sampl:prop:eq-RP}, Fubini, and the definitions $\Phi$ and $\hat B$. By the Markov property of $(\chi_s,s\in\R_+)$ and equation~\reff{Sampl:eq:mp-chi-rv}, it now follows for all $s\in\R_+$ and all $(\hat\chi_u,u\in[0,s])$-measurable events $A$ that
\[\E[\Phi(\hat\chi_{s+t})-\Phi(\hat\chi_s)-\int_s^{s+t}B\Phi(\hat\chi_u)du;A]
=0\]
which shows assertion~\ref{Sampl:item:thm:tv:mmm}.

The proof of~\ref{Sampl:item:thm:tv:dmd} is analogous, we apply \cite{RP81}*{Theorem 2} to the Markov process $(\rho_t,t\in\R_+)$, the measurable map $\Uu\to\UUerg$, $\rho\mapsto\alpha(\nu^{\hat\psi(\beta(\rho))})$, and the probability kernel from $\UUerg$ to $\Uu$ given by $(\nu,B)\mapsto\nu(B)$. In particular, condition (a) in \cite{RP81}*{Theorem 2} is satisfied by Propositions~\ref{Sampl:prop:mmm-mdm-mmm} and \ref{Sampl:prop:sampl-unique}, and by definition of $\UUerg$.

Also the proof of~\ref{Sampl:item:thm:tv:mm} is analogous. We apply \cite{RP81}*{Theorem 2} to the process $(\rho_t,t\in\R_+)$, the measurable map $\psi:\Uu\to\U$, and the probability kernel from $\U$ to $\Uu$ given by $(\chi,B)\mapsto\nu^\chi(B)$. We use the assumption that $\rho_t$ is a.\,s.\ dust-free
in the application of Proposition~\ref{Sampl:prop:eq-RP} and Remark~\ref{Sampl:rem:mm-dm-mm}.
\end{proof}

\section{Proofs related to the lookdown model}
This section contains the remaining proofs of the results from Sections \ref{Sampl:sec:ld-gen} -- \ref{Sampl:sec:semigroup}.
\subsection{Exchangeability in the lookdown model}
\label{sec:proof:exch}
To prove Proposition~\ref{Sampl:prop:rho-exch}, we show in Lemma~\ref{Sampl:lem:exch-pi} below that exchangeability of the genealogical distances is preserved in single reproduction events. Then we construct the genealogical distance matrix $\rho_t$ at time $t$, restricted to the first $n\in\N$ particles, from the initial state $\rho_0$ and the reproduction events before time $t$ that affect the genealogical distances between the first $n$ individuals. Here we use the description of the process $(\gamma_n(\rho_t),t\in\R_+)$ by its jumps and the evolution between the jumps from the end of Section \ref{sec:det-ld-gen}.

For $n\in\N$, we define the action of the group $S_n$ of permutations of $[n]$ on the set $\p_n$ of partitions of $[n]$, and on $\R^{n^2}$, respectively, by
\begin{equation}
\label{eq:action-p}
p(\pi)=\{p(B):B\in\pi\}\quad\text{and}\quad p(\rho)=(\rho(p(i),p(j)))_{i,j\in[n]}
\end{equation}
for each $p\in S_n$, $\pi\in\p_n$, $\rho\in\R^{n^2}$. A random variable with values for instance in $\p_n$ or in $\R^{n^2}$ is called exchangeable if its distribution is invariant under the action of $S_n$.
\begin{lem}
\label{Sampl:lem:exch-pi}
Let $n\in\N$, let $\pi$ be an exchangeable random partition of $[n]$, and let $\rho$ be an exchangeable random variable with values in $\R^{n^2}$. Assume that $\pi$ and $\rho$ are independent. Then the random variable $\pi(\rho)$ is exchangeable.
\end{lem}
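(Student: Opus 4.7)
The plan is to fix $p\in S_n$ and show directly that $p(\pi(\rho))\overset{d}{=}\pi(\rho)$. By the definitions of the actions in \reff{eq:action-p} and \reff{Sampl:eq:pn-Un},
\[p(\pi(\rho))(i,j)=\pi(\rho)(p(i),p(j))=\rho(\pi(p(i)),\pi(p(j)))\]
for all $i,j\in[n]$, so the task reduces to comparing the law of $(\rho(\pi(p(i)),\pi(p(j))))_{i,j\in[n]}$ with that of $(\rho(\pi(i),\pi(j)))_{i,j\in[n]}$.

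The technical subtlety is that precomposing the ordered-block function $\pi$ with $p$ is \emph{not} itself the ordered-block function of any partition of $[n]$: the order-of-minima convention gets shuffled by $p$. To handle this I would condition on $\pi$ with blocks $B_1,\dots,B_k$ listed in order of minima, and introduce the permutation $\sigma\in S_k$ characterized by $\min p^{-1}(B_{\sigma(1)})<\dots<\min p^{-1}(B_{\sigma(k)})$, together with the ordered-block function $\pi'$ associated with the partition $p^{-1}(\pi)$. A direct bookkeeping check gives the identity $\pi\circ p=\sigma\circ\pi'$; extending $\sigma$ arbitrarily to a permutation of $[n]$ (still denoted $\sigma$), the previous display becomes
\[p(\pi(\rho))(i,j)=\rho(\sigma(\pi'(i)),\sigma(\pi'(j)))=\pi'(\sigma(\rho))(i,j).\]

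Next I would absorb $\sigma$ using exchangeability of $\rho$. Since $\sigma$ and $\pi'$ are measurable functions of $\pi$ alone, and $\rho$ is independent of $\pi$ and exchangeable, the conditional law of $\sigma(\rho)$ given $\pi$ coincides with the law of $\rho$. Hence, conditionally on $\pi$, $\pi'(\sigma(\rho))\overset{d}{=}\pi'(\rho)$, and integrating over $\pi$ yields $p(\pi(\rho))\overset{d}{=}\pi'(\rho)$ unconditionally. To conclude, the exchangeability of $\pi$ as a random partition of $[n]$ gives $p^{-1}(\pi)\overset{d}{=}\pi$, so the associated ordered-block functions satisfy $\pi'\overset{d}{=}\pi$; as both are independent of $\rho$, this distributional equality lifts to $\pi'(\rho)\overset{d}{=}\pi(\rho)$, which combined with the previous step gives the claim.

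The main (minor) obstacle is precisely the bookkeeping around the ordered-block convention for $\pi$: the corrective permutation $\sigma\in S_k$ emerges when comparing $\pi\circ p$ with the ordered-block function of $p^{-1}(\pi)$, and a clean proof requires isolating it so that it can be harmlessly erased by the exchangeability of $\rho$ under permutations of $[n]$ extending $\sigma$.
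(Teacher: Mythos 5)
Your proof is correct and follows essentially the same route as the paper's: both isolate a corrective permutation created by the order-of-minima convention (your $\sigma$, the paper's $f(\pi')$), erase it using the exchangeability of $\rho$ conditionally on $\pi$ (possible since it is a deterministic function of $\pi$, which is independent of $\rho$), and then invoke the exchangeability of $\pi$ to identify the law of the relabelled partition ($p^{-1}(\pi)$ in your version, $p(\pi)$ in the paper's) with that of $\pi$. The only differences are bookkeeping conventions, not ideas.
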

Lemma~\ref{Sampl:lem:exch-pi} can be seen as a generalization of Lemma 4.3 of Bertoin \cite{Bertoin}.
\begin{proof}
Let $p\in S_n$. For each partition $\pi'\in\p_n$, the blocks of $\pi'$ are in one-to-one correspondence with the blocks of $p(\pi')$ via the bijection that maps a block $B\in\pi'$ to the block $p(B)\in p(\pi')$.
Also, the blocks of $\pi'$ are in one-to-one correspondence with the integers in $[n]$ that are the minimal elements of the blocks of $\pi'$. The same holds for the blocks of $p(\pi')$ and their minimal elements. It follows that the minimal elements of the blocks of $\pi'$ are in one-to-one correspondence with the minimal elements of the blocks of $p(\pi')$. We extend this one-to-one correspondence arbitrarily to a bijection from $[n]$ to itself which we denote by $f(\pi')$. This defines a map $f:\p_n\to S_n$ which satisfies
\begin{equation}
\label{Sampl:eq:exch-pi-alpha}
\pi'(i)=f(\pi')\left(p(\pi')(p(i))\right)
\end{equation}
for all $\pi'\in\p_n$ and $i\in[n]$. This equation holds as $\pi'(i)$, by its definition in Section \ref{sec:det-ld-gen}, is a minimal element of a block of $\pi'$ and as $p(\pi')(p(i))$ is the minimal element of the corresponding block of $\pi'$. By equation~\reff{eq:action-p} and the definition~\reff{Sampl:eq:pn-Un} of the transformation on $\R^{n^2}$ associated with each element of $\p_n$, equation~\reff{Sampl:eq:exch-pi-alpha} implies
\begin{equation}
\pi'(\rho')=p\left(p(\pi')\left(f(\pi')(\rho')\right)\right)
\label{Sampl:eq:exch-pi-rrho}
\end{equation}
for all $\pi'\in\p_n$ and $\rho'\in\R^{n^2}$.

By assumption, $p(\pi)$ and $\pi$ are equal in distribution. As the distribution of $f(\pi')(\rho)$ is the same for all $\pi'\in\p_n$, namely equal to the distribution of $\rho$, it follows that $f(\pi)(\rho)$ and $\pi$ are independent, and that $f(\pi)(\rho)$ is equal in distribution to $\rho$. This implies that $\pi(\rho)$ and $p(\pi)\left(f(\pi)(\rho)\right)$ are equal in distribution as also $\rho$ and $\pi$ are independent by assumption. By equation~\reff{Sampl:eq:exch-pi-rrho}, it follows that $\pi(\rho)$ and $p^{-1}(\pi(\rho))$ are equal in distribution, which yields the assertion.
\end{proof}
\begin{proof}[Proof of Proposition~\ref{Sampl:prop:rho-exch}]
Let $n\in\N$. For $s\in\R_+$, we define the map
\[\lambda_s:\R^{n^2}\to\R^{n^2},\quad \rho'\mapsto\rho'+\underline{\underline{2}}_n s,\]
where $\underline{\underline{2}}_n=2(\I{i\neq j})_{i,j\in[n]}$. We will use the map $\lambda_s$ to account for the linear growth of the genealogical distances between reproduction events.

On an event of probability $1$, let $(t_1,\pi_1), (t_2,\pi_2),\ldots$ be the points of $\eta$ in $(0,t]\times\p^n$. Let $L=\eta((0,t]\times\p_n)$. Conditionally given $(t_1,\ldots,t_L)$, the partitions $\pi_1,\ldots,\pi_L$ are independent and for each $k\in\N$, the restriction $\gamma_n(\pi_k)$ is exchangeable. This follows from the properties of Poisson random measures and the definition of $\eta$. From the description around equation \reff{Sampl:eq:jumps-pi-rhot}, we have
\[\gamma_{n}(\rho_t)=\lambda_{t-t_L}\circ\gamma_{n}(\pi_L)\circ\lambda_{t_L-t_{L-1}}\circ\ldots\circ\gamma_{n}(\pi_1)\circ\lambda_{t_1}(\gamma_{n}(\rho_0))\quad\text{a.\,s.}\]
on the event $\{L\geq 1\}$, and
$\gamma_n(\rho_t)=\lambda_t(\gamma_n(\rho_0))$ a.\,s.\ on $\{L=0\}$. By assumption, $\gamma_n(\rho_0)$ is exchangeable, and Lemma~\ref{Sampl:lem:exch-pi} implies that $\gamma_n(\rho_t)$ is exchangeable.
The assertion follows as $n\in\N$ was arbitrary and as the distribution of $\rho_t$ is determined by the distributions of the restrictions $\gamma_n(\rho_t)$.
\end{proof}

\subsection{Equality of decompositions}
\label{sec:proof:rv-R}
To prove Proposition~\ref{Sampl:prop:rv-R}, we use the following lemma. Its meaning is that if the ancestral lineage of an individual $i$ at time $t$ can be traced back until a most recent reproduction event on that lineage, then there exists a.\,s.\ another individual $k$ at time $t$ that descends from this reproduction event.
\begin{lem}
\label{Sampl:lem:v-rho}
Assume that $\Xi$ is not dust-free. Let $t\in(0,\infty)$ and $i\in\N$. Then a.\,s.\ on the event $\{v_t(i)<t\}$, there exists an integer $k\in\N\setminus\{i\}$ with $v_t(i)=\tfrac{1}{2}\rho_t(i,k)$.
\end{lem}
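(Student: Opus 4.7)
The plan is to locate a specific reproduction event in the lookdown picture, at time $s^\star=t-v_t(i)$, whose siblings supply the required $k$, and then to verify that no subsequent event can remove those siblings. First, I will argue that on $\{v_t(i)<t\}$ the supremum in the definition of $v_t(i)$ is a.\,s.\ attained. By the non-crossing property (Remark~\ref{Sampl:rem:non-crossing}) one has $A_s(t,i)\leq i$ for all $s\in[0,t]$, and for each $\ell\leq i$ the rate at which reproduction events with $\ell$ in a non-singleton block occur equals $H_\Xi(\p(\ell))=\int|x|_1|x|_2^{-2}\Xi_0(dx)$, where I have used $\Xi\{0\}=0$ to discard the $K_{a,b}$-contribution. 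By the not-dust-free assumption this integral is finite, so $\eta$ has a.\,s.\ only finitely many atoms in $(0,t]\times\bigcup_{\ell\leq i}\p(\ell)$, and the set whose supremum defines $v_t(i)$ is finite. Setting $s^\star=t-v_t(i)$, I then pick the atom $(s^\star,\pi)$ of $\eta$ realizing this supremum, so that $A_{s^\star}(t,i)\in B_j(\pi)$ for some $j$ with $|B_j(\pi)|\geq 2$, and I select any $\ell\in B_j(\pi)\setminus\{A_{s^\star}(t,i)\}$ as a candidate sibling-level.

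The main obstacle is to show that the particle occupying level $\ell$ at time $s^\star$ is not removed from the system in $(s^\star,t]$. I plan to prove the stronger statement that every reproduction event $(s,\pi')\in\eta$ with $s\in(s^\star,t]$ satisfies $\#\pi'=\infty$, ruling out removals entirely. Indeed, since $\Xi\{0\}=0$, every such $\pi'$ arises from the paintbox construction associated with some $x\in\Delta$; if $\#\pi'<\infty$ then necessarily $|x|_1=1$ with only finitely many positive coordinates, so the resulting partition has no dust interval and hence no singleton blocks at all. In that case $A_s(t,i)$ would automatically lie in a non-singleton block of $\pi'$, contradicting the maximality of $s^\star$.

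Granted this, the particle at level $\ell$ at time $s^\star$ reaches some level $k\in\N$ at time $t$ with $A_{s^\star}(t,k)=\ell$. Since $\ell\neq A_{s^\star}(t,i)$, in particular $k\neq i$. By the structure of the reproduction event at $(s^\star,\pi)$ one has $A_{s^\star-}(t,i)=A_{s^\star-}(t,k)=j$, while by non-crossing the two lineages remain on distinct levels throughout $[s^\star,t]$; hence $\sup\{s\in[0,t]:A_s(t,i)=A_s(t,k)\}=s^\star$ and in particular $A_0(t,i)=A_0(t,k)$. The definition of $\rho_t$ then yields $\rho_t(i,k)=2(t-s^\star)=2v_t(i)$, completing the proof.
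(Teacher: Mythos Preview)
Your argument is correct and takes a genuinely different route from the paper's proof. Both proofs locate the reproduction event at time $s^\star=t-v_t(i)$ and a sibling level, and both must then show that the sibling survives until time~$t$. The paper does this indirectly: it fixes a rational time $s\in(s^\star,t)$ at which the sibling is still on its original level (using condition~\reff{Sampl:eq:ass-eta-hat}), observes that the coalescent partition $\Pi^{(t)}_s$ contains the singleton $\{A_s(t,i)\}$, and then invokes Kingman's correspondence to conclude that $\Pi^{(t)}_s$ has infinitely many blocks, so that by non-crossing all particles at time $s$ survive until~$t$. You instead argue directly at the level of the driving point measure: since $\Xi\{0\}=0$, each atom of $\eta$ is a paintbox partition, and one checks that $H_\Xi(\{\pi':\#\pi'<\infty,\ \pi'\text{ has a singleton}\})=0$; hence almost surely every atom with finitely many blocks has no singletons, which would force $A_s(t,i)$ into a non-singleton block and contradict the maximality of~$s^\star$. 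This is a cleaner way to rule out removals, avoiding the passage through rational times and the $\Xi$-coalescent process. The paper's route, on the other hand, stays closer to the coalescent framework already set up in Remark~\ref{Sampl:rem:Xi-coal} and does not require the auxiliary representation of $\eta$ via the underlying points in~$\Delta$.

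Two minor points worth tightening in your write-up: you should note explicitly that $v_t(i)>0$ almost surely (since $t$ is a fixed deterministic time and hence a.\,s.\ not an atom of~$\eta$), so that $s^\star<t$ and the interval $(s^\star,t]$ is nonempty; and the phrase ``by non-crossing the two lineages remain on distinct levels throughout $[s^\star,t]$'' is slightly off---what you actually use is that ancestral lineages, once merged going backward, stay merged, so $A_{s^\star}(t,i)\neq A_{s^\star}(t,k)$ forces $A_s(t,i)\neq A_s(t,k)$ for all $s\in[s^\star,t]$.
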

\begin{proof}
Recall the process $(\Pi^{(t)}_s,s\in\R_+)$ from Remark~\ref{Sampl:rem:Xi-coal}.
We work on the intersection of $\{v_t(i)<t\}$ with the event of probability $1$ on which condition~\reff{Sampl:eq:ass-eta-hat} is satisfied, $v_t(i)>0$, and for each $s\in(0,t)\cap\Q$, the partition $\Pi^{(t)}_s$ contains infinitely many blocks if it contains singletons. The latter event indeed has probability $1$ by Kingman's correspondence and as $t$ is a.\,s.\ not the time of a reproduction event.

At time $t-v_i(t)$, a reproduction event occurs that is encoded by a partition in which the block that contains $A_{t-v_t(i)}(t,i)$ contains some other element $j$. This follows from the definition of $v_t(i)$ in Section~\ref{Sampl:sec:dec-det} and as
$\eta((0,t]\times\hat\p^i)<\infty$ by condition~\reff{Sampl:eq:ass-eta-hat} which means that the reproduction events in which particles on levels not larger than $i$ reproduce do not accumulate. 

Moreover, by condition \reff{Sampl:eq:ass-eta-hat}, there exists a time $s\in(t-v_i(t),t)\cap\Q$ with
$\eta((t-v_t(i),s]\times\hat\p_j)=0$, which implies that the particle on level $j$ at time $t-v_t(i)$ is still on level $j$ at time $s$.

By definition of $v_t(i)$, the partition $\Pi^{(t)}_s$ contains the singleton block $\{A_s(t,i)\}$, hence $\Pi^{(t)}_s$ has infinitely many blocks. This means that infinitely many particles at time $s$ survive until time $t$. Remark~\ref{Sampl:rem:non-crossing} implies that all particles at time $s$ survive until time $t$. Therefore, the particle that was on level $j$ at the times $t-v_t(i)$ and $s$ is on some level $k$ at time $t$. The most recent common ancestor of the particles on levels $i$ and $k$ at time $t$ lives at time $t-v_t(i)$, hence $\tfrac{1}{2}\rho_t(i,k)=v_t(i)$.
\end{proof}
\begin{proof}[Proof of Proposition~\ref{Sampl:prop:rv-R}]
Let $t\in(0,\infty)$ and $i\in\N$. We have to show that $v_t(i)=\Upsilon(\rho_t)(i)$ a.\,s.

From the definitions of the reproduction events in Section~\ref{sec:det-ld-gen} and of the quantity $v_t(i)$ in Section~\ref{Sampl:sec:dec-det}, it follows that for each $s\in(t-v_t(i)\wedge t,t]$, only the particle on level $i$ at time $t$ descends from the particle on level $A_s(t,i)$ at time $s$. The definitions of $\Upsilon$ in Section~\ref{Sampl:sec:dist-dec} and of $\rho_t$ in Section~\ref{sec:det-ld-gen} imply
$0\leq v_t(i)\wedge t\leq\Upsilon(\rho_t)(i)\wedge t$.

In the case that $\Xi$ is dust-free, we have $\Upsilon(\rho_t)=0$ a.\,s.\ by Proposition~\ref{Sampl:prop:rho-df}, hence also $v_t(i)=0$ a.\,s.

Now we assume that $\Xi$ is not dust-free.
Lemma~\ref{Sampl:lem:v-rho} yields $\Upsilon(\rho_t)(i)\leq v_t(i)$ a.\,s.\ on the event $\{v_t(i)<t\}$.

We claim that on the event $\{v_t(i)\geq t\}$, all individuals at time $0$ have descendants at time $t$. This can be seen as follows: For each $s\in(0,t)$, the exchangeable partition $\Pi^{(t)}_{s}$, defined in Remark \ref{Sampl:rem:Xi-coal}, contains the singleton block $\{A_s(t,i)\}$ on the event $\{v_t(i)\geq t\}$. By Kingman's correspondence, it follows that $\Pi^{(t)}_s$ has infinitely many blocks a.\,s.\ on $\{v_t(i)\geq t\}$. Using Remark \ref{Sampl:rem:non-crossing}, we deduce that a.\,s.\ on $\{v_t(i)\geq t\}$, all particles at any time $s\in(0,t)$ survive until time $t$. As condition \reff{Sampl:eq:ass-eta} is a.\,s.\ satisfied, each individual at time $0$ retains its level for a positive time a.\,s., whence all individuals at time $0$ survive until time $t$ a.\,s.

Hence, as $\Upsilon(\rho_0)=v_0$ by assumption~\reff{Sampl:eq:ass-r0v0},
\begin{align*}
&v_t(i)=t+v_0(A_0(t,i))=
t+\tfrac{1}{2}\inf_{j\in\N\setminus\{A_0(t,i)\}}\rho_0(A_0(t,i),j)\\
&= \tfrac{1}{2}\inf_{j\in\N\setminus\{i\}}\rho_t(i,j)=\Upsilon(\rho_t)(i)\quad\text{a.\,s.\ on }\{v_t(i)\geq t\}.
\end{align*}
\end{proof}

\subsection{Uniqueness for the martingale problems for tree-valued Fleming-Viot processes}
\label{sec:proof-uniqueness}
\begin{proof}[Proof of Proposition \ref{Sampl:prop:mp-unique-mm}]
We consider the martingale problem $(B,\Pi)$, the proofs for the other martingale problems are analogous. It remains to show uniqueness of the solution discussed in Section \ref{Sampl:sec:TVFV}. We use a function-valued dual process. This method is applied in the context of tree-valued Fleming-Viot processes in \cite{DGP12}, another dual process is used in \cite{GPW13}. We fix $n\in\N$ and work with a dual process with state space $\C_n$.
With each element $\pi$ of $\p_n$, we also associate a transformation $\C_n\to\C_n$, which we also denote by $\pi$, by
\[\pi(\phi)(\rho)=\phi(\pi(\rho)),\quad\rho\in\R^{n^2},\phi\in\C_n.\]
Here $\pi(\rho)$ is defined in equation \reff{Sampl:eq:pn-Un}.
We define an independent process $(\phi_t, t\in\R_+)$ as the Markov process with càdlàg paths in $\C_n$ such that
\begin{itemize}
\item for each $\pi\in\p_n\setminus\{\mathbf{0}_n\}$ at rate $\lambda_{\pi}$, the process jumps from $\phi$ to $\pi(\phi)$,
\item and between these jumps, the process evolves deterministically according to
\[\phi_{t+s}(\rho)=\phi_t(\rho+\underline{\underline{2}}_ns)\]
for $s,t\in\R_+$ and $\rho\in\R^{n^2}$, where $\underline{\underline{2}}_n=2(\I{i\neq j})_{i,j\in[n]}$.
\end{itemize}

The process $(\phi_t,t\in\R_+)$ solves the martingale problem $(B^\downarrow,\D)$, where
\[\D=\{\C_n\to\R,\phi\mapsto\nu^{\chi'}\phi: \chi'\in\U\}\]
and an operator $B^\downarrow$ with domain $\D$ is defined by $B^\downarrow=B^\downarrow_{\rm coal}+B^{\downarrow}_{\rm shrink}$,
\[
B^\downarrow_{\rm coal}\nu^{\chi'} (\phi) 
= \sum_{\pi\in\p_n\setminus\{\mathbf{0}_n\}} \lambda_{\pi} \left( \nu^{\chi'}(\pi(\phi)) -\nu^{\chi'}\phi\right)\]
and
\[
B^\downarrow_{\rm shrink} \nu^{\chi'}(\phi) 
= \nu^{\chi'} \langle \nabla \phi, \underline{\underline{2}} \rangle\]
for $\phi\in\C_n$ and $\chi'\in\U$. Here we use the notation $\langle \nabla \phi,\underline{\underline{2}} \rangle$ from equation \reff{eq:nabla-growth}.

From this definition, we have
$B(\nu^\cdot\phi)(\chi') = B^\downarrow \nu^{\chi'}(\phi)$ for all $\phi\in\C_n$ and $\chi'\in\U$, where $\nu^\cdot\phi$ is the polynomial associated with $\phi$.
For all $t\in\R_+$ and all polynomials $\Phi\in\Pi$ of degree at most $n$, it follows from Theorem 4.4.11 in \cite{EK86} that $E[\Phi(\tilde\chi_t)]$ is equal for all solutions $((\tilde\chi_t,t\in\R_+);P)$ of the martingale problem $(B,\Pi)$ with initial state $\chi_0$. As $n\in\N$ was arbitrary and the space $\Pi$ of polynomials is separating, the uniqueness assertion follows from Theorem 4.4.2 in \cite{EK86}.
\end{proof}

\subsection{Proof of Lemma \ref{Sampl:lem:Pi-preserve}}
\label{sec:proof:Pi-preserve}
Using the lookdown construction, we show that the semigroup of an $\hat\U$-valued $\Xi$-Fleming-Viot process preserves the set of marked polynomials.
\begin{proof}[Proof of Lemma \ref{Sampl:lem:Pi-preserve}]
Let $\Nn$ denote the space of simple point measures on $(0,\infty)\times\p$.
Let $t\in\R_+$ and $n\in\N$. Note that in the construction in Sections~\ref{sec:det-ld-gen} and \ref{Sampl:sec:dec-det}, the
restriction $\gamma_n(r_t,v_t)$ depends only on the simple point measure $\eta$ and the restriction $\gamma_n(r_0,v_0)$ of the initial state. We may thus define the function $g_n:\R^{n^2}\times\R^n\times\Nn\to\R^{n^2}\times\R^n$ that maps the restriction $\gamma_n(r_0,v_0)$ of the initial state and the point measure $\eta$ to $\gamma_n(r_t,v_t)$. Note that when the simple point measure is fixed, $g_n$ is a differentiable function on $\R^{n^2}\times\R^n$ with bounded uniformly continuous derivative.

Let $\phi\in\hat\C_n$. We define the function
\[f:\R^{n^2}\times\R^n\to\R,\quad
(r,v)\mapsto\int\P(\eta\in d\eta')\phi\circ g_n((r,v),\eta'),\]
where $\eta$ is now the Poisson random measure from Section~\ref{Sampl:sec:Xi-ld}.
By dominated convergence and the mean value theorem, also the function $f$ is differentiable with bounded uniformly continuous derivative, and we obtain that $f\in\hat\C_n$.

Let $\Phi$ be the marked polynomial associated with $\phi$. For $\chi\in\hat\U$, let $(r_0,v_0)$ be a random variable with the marked distance matrix distribution of $\chi$, and let $(r_t,v_t)$ be defined from $(r_0,v_0)$ and the independent Poisson random measure $\eta$ as in Section~\ref{Sampl:sec:dec-stoch}. From Propositions~\ref{Sampl:prop:rv-R} and \ref{Sampl:prop:eq-RP}, and as we may assume that the $\hat\U$-valued $\Xi$-Fleming-Viot process $(\hat\chi_s,s\in\R_+)$ from Section \ref{Sampl:sec:semigroup} satisfies $\hat\chi_t=\hat\psi(r_t,v_t)$ a.\,s., we obtain that
$\E_\chi[\Phi(\hat\chi_t)]=\nu^\chi f$
for all $\chi\in\hat\U$.
Hence, $\chi\mapsto\E_\chi[\Phi(\hat\chi_t)]$ is in $\hat\Pi$.
\end{proof}

\section{Construction from the flow of bridges}
\label{Sampl:sec:bridges}
In this section, we construct a $\UUerg$-valued $\Xi$-Fleming-Viot process from the dual flow of bridges of Bertoin and Le Gall \cite{BLG03}.

A random non-decreasing right-continuous function $\tilde F:[0,1]\to[0,1]$ with exchangeable increments and $\tilde F(0)=0$, $\tilde F(1)=1$ is called a bridge. We view a bridge as a random variable with values in the space of càdlàg paths $[0,1]\to[0,1]$ which we endow with the Skorohod metric.
The dual flow of bridges is a collection $F=( F_{s,t},s<t)$ of bridges that satisfies the following properties (see \cite{BLG03}*{Section 5.1}):
\begin{enumerate}[label=(\roman{*}),ref=(\roman{*})]
\item\label{Sampl:item:fob:cocycle} For every $s<t<u$, $ F_{t,u}\circ F_{s,t}= F_{s,u}$ a.\,s.
\item\label{Sampl:item:fob:ind} The law of $ F_{s,t}$ depends only on $t-s$. For $s_1<s_2<\ldots<s_n$, the bridges $ F_{s_1,s_2},  F_{s_2,s_3},\ldots, F_{s_{n-1},s_n}$ are independent.
\item $ F_{0,0}$ is the identity function. For every $x\in[0,1]$, the random variable $ F_{0,t}(x)$ converges to $x$ in probability as $t$ decreases to zero.
\end{enumerate}
For each $s<t$, it is also assumed that $F_{s,t}$ is a.\,s.\ not the identity function.

The interpretation is that the individuals of a continuous population are represented by the elements of the interval $[0,1]$. For each $s\leq t$, the individuals in a subinterval $(x_1,x_2]$ at time $s$ have descendants at time $t$ that are a.\,s.\ the elements of $( F_{s,t}(x_1), F_{s,t}(x_2)]$, see \cite{BLG06}.

In \cite{BLG03}*{Section 3}, Kingman's correspondence is extended so as to represent distributions of $\Xi$-coalescents in terms of sampling from flows of bridges.
Let $F$ be a dual flow of bridges, and let $V=(V_i,i\in\N)$ be an iid sequence of uniform $[0,1]$-valued random variables, independent of $ F$. This iid sequence is interpreted as a sequence of random samples from the population at some time $t\in\R$. For each $s\in\R_+$, a partition $\tilde\pi^{(t)}_s$ is defined such that any integers $i,j\in\N$ are in the same block of $\tilde\pi^{(t)}_s$ if and only if $ F_{t-s,t}^{-1}(V_i)= F_{t-s,t}^{-1}(V_j)$ which means that these samples have the same ancestor at time $t-s$.
Here we set $f^{-1}(t)=\inf\{s\in[0,1]:f(s)>t\text{ or }s=1\}$ for $t\in[0,1]$ and a càdlàg function $f:[0,1]\to[0,1]$.
In \cite{BLG03}*{Theorem 1}, it is shown that the partition-valued process $(\tilde\pi^{(t)}_s,s\in\R_+)$ obtained in this way is a version of a $\Xi$-coalescent of Schweinsberg \cite{Schw00}.

For each $t\in\R$, there exists an event of probability $1$ on which for all $s\leq s'\in\Q_+$, the partition $\tilde\pi^{(t)}_{s'}$ can be obtained by merging blocks of the partition $\tilde\pi^{(t)}_s$.
We can thus define a.\,s.\ an ultrametric $\tilde\rho_t$ by
\[\tilde\rho_t(i,j)=2\inf\{s\in \Q_+:\text{ $i$ and $j$ are in the same block of }\tilde\pi^{(t)}_s\}.\]
The assumption that for each $r<s$, the bridge $F_{r,s}$ is a.\,s.\ not the identity function  implies that the infimum in the definition of $\tilde\rho_t(i,j)$ is a.\,s.\ not over the empty set.

Moreover, we define a.\,s.\ a random variable $\tilde\xi_t$ with values in the space $(\UU,\dP)$ of exchangeable distributions on $\Uu$ such that $\tilde\xi_t$ is a regular conditional distribution of $\tilde\rho_t$ given the collection of bridges $(F_{t-s,t},s\in \Q_+)$. For the existence of this regular conditional distribution, see e.\,g.\ \cite{Kal02}*{Theorem 6.3}.

Analogously to Sections \ref{Sampl:sec:TVFV-mmm} and \ref{Sampl:sec:TVFV-dm}, for a finite measure $\Xi$ on $\Delta$, a stationary $\UUerg$-valued $\Xi$-Fleming-Viot process $(\xi_t,t\in\R)$ is given by
$\xi_t=\alpha(\nu^{\hat\psi\circ\beta(\bar\rho_t)})$, where $(\bar\rho_t,t\in\R)$ is defined as in Section \ref{Sampl:sec:equil}.
We note that a stationary $\UUerg$-valued $\Xi$-Fleming-Viot process can be read off from the dual flow of bridges:
\begin{thm}
\label{Sampl:thm:fob}
There exists a finite measure $\Xi$ on $\Delta$ such that the process
$(\tilde\xi_t,t\in\R)$ is a version of a stationary $\UUerg$-valued $\Xi$-Fleming-Viot process.
\end{thm}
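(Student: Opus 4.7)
My plan is to identify $\Xi$ via the Bertoin--Le Gall correspondence, verify that $\tilde\xi_t$ coincides with the ergodic-component map applied to $\tilde\rho_t$, and then match finite-dimensional distributions with those of a stationary $\UUerg$-valued $\Xi$-Fleming-Viot process via the well-posed martingale problem.

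First, by \cite{BLG03}*{Theorem 1}, for each $t\in\R$ the partition-valued process $(\tilde\pi^{(t)}_s, s\in\R_+)$ is a version of a $\Xi$-coalescent for some finite measure $\Xi$ on $\Delta$ determined by the law of the flow $F$; I take this $\Xi$. In particular, $\tilde\rho_t$ has at each fixed $t$ the distribution of the ultrametric matrix of a $\Xi$-coalescent, which by Remark~\ref{Sampl:rem:Xi-coal} and the stationary lookdown construction of Section~\ref{Sampl:sec:equil} coincides with the distribution of $\bar\rho_0$.

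Next, I would show that $\tilde\xi_t = \alpha(\nu^{\hat\psi\circ\beta(\tilde\rho_t)})$ a.s., and hence $\tilde\xi_t\in\UUerg$ a.s. Since $V$ is iid and independent of $F$, permuting its coordinates does not change the joint law with $F$; writing $\tilde\rho_t=g_F(V)$ with $g_F$ a deterministic equivariant function of $V$ (for each realization of $F$), the conditional form of the Hewitt--Savage zero-one law gives $\tilde\xi_t(I)\in\{0,1\}$ a.s.\ for every invariant Borel set $I\subset\Uu$, so $\tilde\xi_t$ coincides with the ergodic-decomposition map $\zeta$ of Remark~\ref{rem:erg-dec-map} applied to $\tilde\rho_t$. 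Combined with the first step and Theorem~\ref{thm:repr}, this matches the one-dimensional distribution of $\tilde\xi_t$ with that of the stationary $\UUerg$-valued $\Xi$-Fleming-Viot process from Section~\ref{Sampl:sec:TVFV-dm}.

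Third, I would establish stationarity and the Markov property of $(\tilde\xi_t, t\in\R)$ from properties (i) and (ii) of the flow. Stationarity is immediate from the fact that the law of $F_{t-s,t}$ depends only on $s$. For the Markov property, given $t_1<t_2$ the cocycle $F_{s,t_2}=F_{t_1,t_2}\circ F_{s,t_1}$ combined with the independence of $F_{t_1,t_2}$ from $(F_{s,t_1}, s\leq t_1)$ lets me write $\tilde\xi_{t_2}$ as a measurable function of $\tilde\xi_{t_1}$ and an independent copy of $F_{t_1,t_2}$, once one verifies that the action of the intermediate bridge on the tree up to labelling factors through the unlabelled tree at time $t_1$.

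Finally, to match all finite-dimensional distributions with those of the stationary process $(\xi_t, t\in\R)$, I would verify that $(\tilde\xi_t, t\in\R)$ solves the martingale problem $(C,\Cc)$ from Section~\ref{Sampl:sec:TVFV-dm} and invoke well-posedness (Proposition~\ref{Sampl:prop:mp-unique-mm}). The main obstacle is this last step: computing the infinitesimal action on test functions $\Psi\in\Cc$ requires identifying the reproduction events hidden in the infinitesimal bridge increments. One route is to use the Poisson construction of the dual flow given in \cite{BLG03}*{Section 4} to read off the generator directly. A smoother alternative is to couple the Poisson lookdown of Section~\ref{Sampl:sec:Xi-ld} with a flow of bridges built from its reproduction events, as in Labbé \cite{Lab12}, verify that the two flows have the same law, and then transfer the matching of ergodic components via Proposition~\ref{Sampl:prop:eq-RP}.
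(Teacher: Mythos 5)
Your opening steps track the paper: identifying $\Xi$ from \cite{BLG03}*{Theorem 1}, checking that the one-dimensional marginal of $\tilde\rho_t$ is that of a $\Xi$-coalescent ultrametric, and arguing that $\tilde\xi_t\in\UUerg$ a.\,s.\ so that $\tilde\xi_t=\zeta(\tilde\rho_t)$ (the paper does this via Proposition \ref{prop:UUerg}, Kallenberg's Lemma 7.35 and Lemma \ref{Sampl:cor:rho-nu-erg}; your conditional Hewitt--Savage argument is an acceptable substitute). But the heart of the theorem is the identification of the \emph{dynamics} of $(\tilde\xi_t,t\in\R)$ with those of the $\UUerg$-valued $\Xi$-Fleming-Viot process, and this is exactly the part you leave open. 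Your Markov-property step asserts that $\tilde\xi_{t_2}$ can be written as a function of $\tilde\xi_{t_1}$ and $F_{t_1,t_2}$ ``once one verifies that the action of the intermediate bridge factors through the unlabelled tree'' --- that verification is the real content, and it is nontrivial: the samples $V$ used to define $\tilde\rho_{t_2}$ are fresh uniforms, and one must show that the ancestral positions $V'_i=F^{-1}_{t_1,t_2}(V_j)$ again form an iid uniform sequence independent of the coalescence pattern on $(t_1,t_2]$ and of the bridges before $t_1$ (the paper's Lemma \ref{Sampl:lem:fob-kernel}, via repeated use of \cite{BLG03}*{Lemma 2} and the cocycle property). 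Likewise, your final step --- verifying that $(\tilde\xi_t)$ solves the martingale problem $(C,\Cc)$ so that Proposition \ref{Sampl:prop:mp-unique-mm} pins down the finite-dimensional distributions --- is declared ``the main obstacle'' and only two unexecuted routes are sketched (reading the generator off a Poisson construction of the flow, or a coupling with the lookdown model \`a la Labb\'e). So the proposal, as it stands, has a genuine gap: neither the transition mechanism nor the generator of $(\tilde\xi_t)$ is actually identified.

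It is also worth noting that the paper's proof avoids the generator/martingale-problem machinery altogether, and this is what makes it tractable. It introduces the explicit $u$-step kernel $\Lambda_\Xi$ (run a $\Xi$-coalescent for time $u$ and graft a sample from the current state $\xi$ on top of its blocks), observes in Remark \ref{Sampl:rem:tvfv-fob-trans} that $\Lambda_\Xi$ is the conditional law of $\bar\rho_{t+u}$ given $\xi_t$ for the lookdown-based stationary process, and proves in Lemma \ref{Sampl:lem:fob-kernel} that the same kernel governs $\tilde\rho_{t+u}$ given $\tilde\xi_t$, together with the conditional independence from $(\tilde\xi_s,s\le t)$. Pushing forward by $\zeta$ then shows both processes are Markov with transition kernel $\Lambda_\Xi(\cdot,\zeta^{-1}(\cdot))$ and the same stationary marginal, which gives the theorem directly. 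If you want to salvage your route, you would either have to carry out the coupling with the lookdown flow in full, or compute the infinitesimal action of the flow on polynomials --- both substantially more work than the paper's direct comparison of $u$-step transition kernels.
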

For the proof of Theorem \ref{Sampl:thm:fob}, we show that $(\tilde\xi_t,t\in\R)$ is a Markov process and has the transition kernel of a $\UUerg$-valued $\Xi$-Fleming-Viot process. In the following, we fix $u\in\R_+$.

First, we define for each finite measure $\Xi$ on $\Delta$ a probability kernel $\Lambda_\Xi$ from $\UU$ to $\Uu$ such that for each $\xi\in\UU$, the distribution $\Lambda_\Xi(\xi,\cdot)$ is the distribution of a random variable $\rho$ which we define as follows. Let $\rho'$ be a random variable with distribution $\xi$. Let $\rho''$ be an independent $\Uu$-valued random variable that is distributed as the random ultrametric associated with a $\Xi$-coalescent. That is, $\rho''$ shall be distributed as the random variable $\bar\rho_u$ mentioned above, cf.\ Remark \ref{Sampl:rem:Xi-coal}.
We define a partition $\pi$ of $\N$ such that $i$ and $j$ are in the same block of $\pi$ if and only if $\rho''(i,j)<2u$. Let $B_1(\pi),B_2(\pi),\ldots$ be the blocks of $\pi$, ordered increasingly according to their smallest element. For $i\in\N$, let $A(i)$ be the integer $j$ such that $i\in B_j(\pi)$.

Then we set for $i,j\in\N$
\[\rho(i,j)=\left\{\begin{aligned}
&\rho''(i,j)\wedge(2u)\quad\text{if }\rho''(i,j)<2u\\
&2u+\rho'(A(i),A(j))\quad\text{else.}
\end{aligned}\right.\]

In the following, we also fix $t\in\R$.
\begin{rem}
\label{Sampl:rem:tvfv-fob-trans}
Let $(\bar\rho_s,s\in\R)$ be defined as in Section \ref{Sampl:sec:equil} from a measure $\Xi$, and let $\xi_t=\alpha(\nu^{\hat\psi\circ\beta(\bar\rho_t)})$. Note that $\Lambda_\Xi$ is a regular conditional distribution of $\bar\rho_{t+u}$ given $\xi_t$. This follows as $\bar\rho_{t+u}\wedge(2u)$ is independent of $\xi_t$ and $\bar\rho_t$, as $\bar\rho_{t+u}(i,j)=2u+\bar\rho_t(A_t(t+u,i),A_t(t+u,j))$ for $i,j\in\N$ with $\bar\rho_{t+u}(i,j)\geq 2u$, as $A_t(t+u,i)$ can be read off from $\bar\rho_{t+u}\wedge(2u)$ like $A(i)$ can be read off from $\rho''$ in the definition of $\Lambda_\Xi$, and as $\xi_t$ is a regular conditional distribution of $\bar\rho_t$ given $\xi_t$ by Remark \ref{rem:cond-repr}.
\end{rem}

\begin{lem}
\label{Sampl:lem:fob-kernel}
There exists a finite measure $\Xi$ on $\Delta$ such that $\Lambda_\Xi$ is a regular conditional distribution of $\tilde\rho_{t+u}$ given $\tilde\xi_t$.
Moreover, $\tilde\rho_{t+u}$ is conditionally independent of
$(\tilde\xi_s,s\leq t)$ given $\tilde\xi_t$.
\end{lem}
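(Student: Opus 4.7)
I would first identify the measure $\Xi$ for the lemma: by the Bertoin--Le Gall correspondence \cite{BLG03}*{Theorem 1}, there is a unique finite measure $\Xi$ on $\Delta$ such that the process $(\tilde\pi^{(t+u)}_s, s\in\R_+)$ is a $\Xi$-coalescent. I then introduce the enlarged sigma algebra $\mathcal{F}_t := \sigma(F_{r,r'}: r\leq r'\leq t)$. By property \ref{Sampl:item:fob:ind} of the flow, the bridges $(F_{r,r'}: t\leq r\leq r'\leq t+u)$ together with the sequence $V$ are independent of $\mathcal{F}_t$; moreover, since $\tilde\xi_s$ is $\sigma(F_{s-r,s}: r\in\Q_+)\subset\mathcal{F}_t$-measurable for every $s\leq t$, the whole process $(\tilde\xi_s, s\leq t)$ is $\mathcal{F}_t$-measurable. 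Hence it suffices to establish
\[\E[\phi(\tilde\rho_{t+u})\mid\mathcal{F}_t]=\int\Lambda_\Xi(\tilde\xi_t,d\rho)\phi(\rho)\quad\text{a.\,s.}\]
for all bounded measurable $\phi$; both conclusions of the lemma then follow by the tower property, since the right-hand side is measurable with respect to $\tilde\xi_t$.

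Next I would decompose $\tilde\rho_{t+u}$ using the cocycle property \ref{Sampl:item:fob:cocycle}. Setting $W_i:=F_{t,t+u}^{-1}(V_i)$, which plays the role of the ancestor at time $t$ of the $i$-th sample at time $t+u$, one obtains $F_{t+u-s,t+u}^{-1}(V_i)=F_{t+u-s,t}^{-1}(W_i)$ a.s.\ for $s\geq u$. Writing $\pi:=\tilde\pi^{(t+u)}_u$ with blocks $B_1,B_2,\ldots$ enumerated by smallest element, $j_k:=\min B_k$, $W^*_k:=W_{j_k}$ and $A(i)$ the block-index function, one sees that for $A(i)=A(j)$ the distance $\tilde\rho_{t+u}(i,j)$ is at most $2u$ and that the whole process $(\tilde\pi^{(t+u)}_s, s\in[0,u])$ -- hence the within-block structure of $\tilde\rho_{t+u}$ -- is a function only of the bridges in $(t,t+u]$ and of $V$, independent of $\mathcal{F}_t$, distributed as the $\Xi$-coalescent ultrametric truncated at $2u$. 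For $A(i)\neq A(j)$ one has
\[\tilde\rho_{t+u}(i,j)=2u+2\inf\{s'\in\Q_+: F_{t-s',t}^{-1}(W^*_{A(i)})=F_{t-s',t}^{-1}(W^*_{A(j)})\},\]
in which the infimum is a measurable function of $(W^*_{A(i)}, W^*_{A(j)})$ and of the bridges before time $t$.

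The crux is to prove that, conditional on $\mathcal{F}_t$ and on $\pi$, the sequence $(W^*_k)_{k\in\N}$ is iid uniform on $[0,1]$. Granted this, the across-block matrix is a function of $\mathcal{F}_t$-measurable bridges evaluated at iid uniform samples independent of $\mathcal{F}_t$, and so by the very definition of $\tilde\xi_t$ its conditional distribution given $\mathcal{F}_t$ is $\tilde\xi_t$; combining this with the independent small-time $\Xi$-coalescent part reproduces verbatim the construction of $\Lambda_\Xi(\tilde\xi_t,\cdot)$. I would prove the iid uniform claim by invoking the Kallenberg representation of the exchangeable-increment bridge $F_{t,t+u}(x)=\beta x+\sum_\ell\Delta_\ell\I{U_\ell\leq x}$, with $(U_\ell)_\ell$ iid uniform given the size structure $(\beta,(\Delta_\ell))$: each non-singleton block of $\pi$ picks up the jump location $U_\ell$ of the corresponding atom, and each singleton block originating from the continuous slope produces a sample uniform on $[0,1]$ by the pushforward of Lebesgue measure under $F_{t,t+u}^{-1}$. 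This step is the main technical obstacle, since one has to verify that the joint law of the labels $(W^*_k)$ is unaffected by the partition of blocks into ``jump-type'' and ``continuous-type'', which amounts to a careful exchangeability argument in the spirit of Kingman's paintbox combined with the independence of jump sizes and jump locations in the Kallenberg form of the bridge.
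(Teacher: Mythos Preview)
Your overall architecture coincides with the paper's: condition on the sigma algebra generated by the bridges up to time $t$, use the cocycle property to split $\tilde\rho_{t+u}$ into a ``recent'' part (the $\Xi$-coalescent on $[0,u]$) and an ``old'' part driven by the ancestral samples $W^*_k=F_{t,t+u}^{-1}(V_{j_k})$, and then show that the $W^*_k$ are iid uniform and independent of the recent part. The paper does exactly this, citing \cite{BLG03}*{Lemma 2} (which is precisely the Kallenberg/paintbox computation you sketch) rather than redoing it by hand.

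There is, however, a genuine gap in your proposal. You state the crux as proving that $(W^*_k)_k$ is iid uniform conditional on $\mathcal F_t$ and on $\pi=\tilde\pi^{(t+u)}_u$. That is not enough: to reproduce $\Lambda_\Xi(\tilde\xi_t,\cdot)$ you need $(W^*_k)_k$ independent of the \emph{entire} truncated ultrametric $\tilde\rho_{t+u}\wedge(2u)$, equivalently of the whole process $(\tilde\pi^{(t+u)}_s,\,s\in[0,u])$, not just of its time-$u$ marginal $\pi$. Your Kallenberg argument is carried out on the single bridge $F_{t,t+u}$, and this bridge alone determines $\pi$ and $(W^*_k)_k$; but the intermediate partitions $\tilde\pi^{(t+u)}_s$ for $s<u$ depend on the intermediate bridges $F_{t+u-s,t+u}$, which are not functions of $F_{t,t+u}$. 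So analyzing $F_{t,t+u}$ via its Kallenberg representation cannot, by itself, deliver independence from the fine small-time structure.

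The paper closes this gap by applying \cite{BLG03}*{Lemma 2} \emph{iteratively} to the successive bridges $F_{t+u-u_1,t+u},\,F_{t+u-u_2,t+u-u_1},\ldots,F_{t,t+u-u_{\ell-1}}$ for arbitrary $0\le u_1<\cdots<u_\ell=u$: at each step the new ancestral samples are iid uniform and independent of all partitions produced so far, and after the last step one concludes that $V'=(W^*_k)_k$ is iid uniform and independent of $(\tilde\pi^{(t+u)}_{u_1},\ldots,\tilde\pi^{(t+u)}_{u_\ell})$, hence of the whole small-time coalescent. If you want to keep your Kallenberg formulation, you must likewise iterate it across the intermediate bridges rather than apply it once to the composite bridge $F_{t,t+u}$.
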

\begin{proof}
We claim that given the collection of bridges $(F_{r,s}:r<s\leq t)$, the random variable $\tilde\rho_{t+u}$ has conditional distribution $\Lambda_\Xi(\tilde\xi_t,\cdot)$ for some finite measure $\Xi$ on $\Delta$. By construction of $\tilde\xi_s$, this claim implies both assertions of the lemma.

We assume that the coalescent process $(\tilde\pi^{(t+u)}_s,s\in\R_+)$ and the associated ultrametric $\tilde\rho_{t+u}$ are constructed as above from $F$ and a sequence $(V_i,i\in\N)$ of independent uniformly distributed $[0,1]$-valued random variables that is independent of $F$. By \cite{BLG03}*{Theorem 1}, there exists a finite measure $\Xi$ on $\Delta$ such that $(\tilde\pi^{(t+u)}_s,s\in\R_+)$ is a version of a $\Xi$-coalescent.

Let $B_1(\tilde\pi^{(t+u)}_u),B_2(\tilde\pi^{(t+u)}_u),\ldots$ be the blocks of $\tilde\pi^{(t+u)}_u$ in increasing order according to their respective smallest element. For $i\in\N$, we define $\tilde A(i)=j$ where $j$ is the integer such that $i\in B_j(\tilde\pi^{(t+u)}_u)$.
We define a sequence $V'=(V'_i,i\in\N)$ analogously to equation (3) of \cite{BLG03}: For $i\in\N$ with $i\leq\#\tilde\pi^{(t+u)}_u$, we set $V'_i=F^{-1}_{t,t+u}(V_j)$, where $j$ is any element of $B_i(\tilde\pi^{(t+u)}_u)$. If the number of blocks $\#\tilde\pi^{(t+u)}_u$ is finite, we extend the sequence $(V'_i,i\leq \#\tilde\pi^{(t+u)}_u)$ to $(V'_i,i\in\N)$ using an independent sequence of independent uniform random variables on $[0,1]$.

Let $\ell\in\N$ and $0\leq u_1\leq u_2\leq\ldots\leq u_\ell=u$. 
Repeated application of \cite{BLG03}*{Lemma 2} to the bridges
$F_{t+u-u_1,t+u},\ldots,F_{t+u-u_\ell,t+u-u_{\ell-1}}$
(similarly to \cite{BLG03}*{Corollary 1}) yields that $V'$ is a sequence of independent $[0,1]$-valued uniformly distributed random variables that is also independent of $\tilde\pi^{(t+u)}_{u_1},\ldots,\tilde\pi^{(t+u)}_{u_\ell}$. By construction and property \ref{Sampl:item:fob:ind} of the dual flow of bridges, $V'$ and $\tilde\pi^{(t+u)}_{u_1},\ldots,\tilde\pi^{(t+u)}_{u_\ell}$ are also independent of $(F_{r,s}:r<s\leq t)$.

We define $\tilde\rho_t$ from $F$ and the sequence $V'$. Then $\tilde\rho_{t}$ is conditionally independent of $\tilde\rho_{t+u}\wedge (2u)$ given the collection of bridges $(F_{r,s}:r<s\leq t)$.
This follows from the above by the uniqueness lemma as for $i,j\in\N$ and $m=1,\ldots,\ell$,
$\{\tilde\rho_{t+u}(i,j)\leq u_m\}$ is, up to null events, the event that $i$ and $j$ are in the same block of $\tilde\pi^{(t+u)}_{u_m}$.

By construction, $\tilde\xi_t$ is a conditional distribution of $\tilde\rho_t$ given $(F_{r,s}:r<s\leq t)$. We also define the coalescent process $(\tilde\pi^{(t)}_s,s\in\R_+)$ from $V'$ and $F$. Then $\tilde\rho_t$ is the associated ultrametric. For $i,j\in\N$ and $s\in\R_+$, the following events are equal up to null events:
\begin{align*}
&\{\tilde\rho_{t+u}(i,j)\leq 2(u+s)\}=\{i,j \text{ are in the same block of }\tilde\pi^{(t+u)}_{u+s}\}\\
&=\{\tilde A(i),\tilde A(j) \text{ are in the same block of }\tilde\pi^{(t)}_{s}\}
=\{\tilde\rho_{t}(\tilde A(i),\tilde A(j))\leq 2s\}.
\end{align*}
For the equality up to null events of the second and the third event, we use the definition of $V'$ and property \ref{Sampl:item:fob:cocycle} of the dual flow of bridges.
It follows that a.\,s.,
\[\tilde\rho_{t+u}(i,j)=\left\{\begin{aligned}
&\tilde\rho_{t+u}(i,j)\wedge(2u)\quad\text{if }\tilde\rho_{t+u}(i,j)<2u\\
&2u+\tilde\rho_t(\tilde A(i),\tilde A(j))\quad\text{else.}
\end{aligned}\right.\]
The claim follows as $\tilde A(i)$ can a.\,s.\ be read off from $\tilde\rho_{t+u}\wedge(2u)$ in the same way as $A(i)$ is read off from $\rho''$ in the definition of $\Lambda_\Xi$.
\end{proof}

To deduce Theorem \ref{Sampl:thm:fob}, we use that $\tilde\xi_{t+u}\in\UUerg$ a.\,s.
\begin{proof}[Proof of Theorem \ref{Sampl:thm:fob}]
Let $t\in\R$ and $u>0$.
By Proposition \ref{prop:UUerg} and as the sequence $V$ in the definition of $\tilde\rho_t$ is iid, $\tilde\xi_t\in\UUerg$ a.\,s. That $\tilde\xi_t$ is concentrated on the ergodic distributions can be seen directly or by an application of e.\,g.\ \cite{Kal05}*{Lemma 7.35}. By construction, $\tilde\xi_{t+u}$ is a regular conditional distribution of $\tilde\rho_{t+u}$ given $\tilde\xi_{t+u}$. By Lemma \ref{Sampl:cor:rho-nu-erg} below, $\tilde\xi_{t+u}=\zeta(\tilde\rho_{t+u})$ a.\,s., where $\zeta:\Uu\to\UUerg$,
$\rho\mapsto\nu^{\hat\psi\circ\beta(\rho)}$.
Hence, by Lemma \ref{Sampl:lem:fob-kernel}, there exists a finite measure $\Xi$ on $\Delta$ such that $\Lambda_\Xi(\cdot,\zeta^{-1}(\cdot))$ is a regular conditional distribution of $\tilde\xi_{t+u}$ given $\tilde\xi_t$, and $\tilde\xi_{t+u}$ is conditionally independent of $(\tilde\xi_s,s\leq t)$ given $\tilde\xi_t$. The latter property is the Markov property of $(\tilde\xi_s,s\in\R)$.

Let now $(\xi_s,s\in\R)$ be a $\UUerg$-valued $\Xi$-Fleming-Viot process defined from $(\bar\rho_s,s\in\R)$ as recalled in the beginning of this section.
As in Theorem \ref{Sampl:thm:tv} (or alternatively, by an extension of Remark \ref{Sampl:rem:tvfv-fob-trans}), the process $(\xi_t,t\in\R)$ is Markovian.
By Remark \ref{Sampl:rem:tvfv-fob-trans} and as $\xi_{t+u}=\zeta(\bar\rho_{t+u})$ by definition,
$\Lambda_\Xi(\cdot,\zeta^{-1}(\cdot))$ is a regular conditional distribution also of $\xi_{t+u}$ given $\xi_t$. This implies the assertion.
\end{proof}
\begin{lem}
\label{Sampl:cor:rho-nu-erg}
Let $\xi\in\UUerg$ and let $\rho$ be a random variable with distribution $\xi$. Then the distance matrix distribution of $\hat\psi\circ\beta(\rho)$ equals $\xi$ a.\,s.
\end{lem}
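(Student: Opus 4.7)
The plan is to reduce the claim to Proposition \ref{Sampl:prop:mmm-mdm-mmm} by identifying $\xi$ as the distance matrix distribution of a marked metric measure space in $\hat\U$, and then showing that $\beta(\rho)$ has the corresponding marked distance matrix distribution.

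First, since $\xi\in\UUerg\subset\UU$ is concentrated on $\Uu$, the definition of $\UUerg$ provides a marked metric measure space $(X,r',m)$ with $\xi=\alpha(\nu^{(X,r',m)})$, and the condition $\xi(\Uu)=1$ is exactly the requirement that its isomorphy class $\chi$ lies in $\hat\U$. So I will fix such a $\chi\in\hat\U$ with $\xi=\alpha(\nu^\chi)$.

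The key step will be to show that $\beta(\rho)$ has distribution $\nu^\chi$. To this end I would introduce an auxiliary $\hat\Uu$-valued random variable $(\tilde r,\tilde v)$ with distribution $\nu^\chi$ (such a variable exists in $\hat\Uu$ almost surely since $\chi\in\hat\U$). Proposition \ref{Sampl:prop:sampl-unique} then applies to give $(\tilde r,\tilde v)=\beta\circ\alpha(\tilde r,\tilde v)$ a.\,s. Because $\alpha(\tilde r,\tilde v)$ has distribution $\alpha(\nu^\chi)=\xi$, which coincides with the distribution of $\rho$, the random variables $\beta(\rho)$ and $\beta\circ\alpha(\tilde r,\tilde v)=(\tilde r,\tilde v)$ agree in distribution, so $\beta(\rho)$ has distribution $\nu^\chi$, as desired.

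Once that is established, I simply apply Proposition \ref{Sampl:prop:mmm-mdm-mmm} to the $\hat\Dd$-valued random variable $\beta(\rho)$ with distribution $\nu^\chi$, which yields $\hat\psi\circ\beta(\rho)=\chi$ a.\,s. Consequently the distance matrix distribution of $\hat\psi\circ\beta(\rho)$ equals $\alpha(\nu^\chi)=\xi$ a.\,s., which is the assertion. There is no real obstacle here: the entire argument is a bookkeeping exercise combining the uniqueness of the decomposition ($\beta\circ\alpha=\mathrm{id}$ on samples from $\chi\in\hat\U$) with the reconstruction of $\chi$ from its typical sample, and the only point worth stating carefully is the passage from ``$\rho$ is distributed as $\alpha$ of something with distribution $\nu^\chi$'' to ``$\beta(\rho)$ is distributed as $\nu^\chi$,'' which is exactly what Proposition \ref{Sampl:prop:sampl-unique} delivers.
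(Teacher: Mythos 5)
Your proof is correct and follows essentially the same route as the paper: identify $\chi\in\hat\U$ with $\xi=\alpha(\nu^\chi)$, use Proposition \ref{Sampl:prop:sampl-unique} to see that $\beta(\rho)$ has distribution $\nu^\chi$, and then apply Proposition \ref{Sampl:prop:mmm-mdm-mmm} to conclude $\hat\psi\circ\beta(\rho)=\chi$ a.\,s. The only difference is that you spell out the distributional bookkeeping that the paper leaves implicit.
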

\begin{proof}
By definition of $\UUerg$, there exists $\chi\in\hat\U$ with distance matrix distribution $\alpha(\nu^\chi)=\xi$. Propositions \ref{Sampl:prop:sampl-unique} and \ref{Sampl:prop:mmm-mdm-mmm} imply $\hat\psi\circ\beta(\rho)=\chi$ a.\,s.
\end{proof}

\section*{List of notation}
\sectionmark{List of notation}
Here we collect notation that is used globally in the article.

\small{
\hparagraph{Miscellaneous}
$\R_+=[0,\infty)$, $\Q_+=\R_+\cap\Q$, $\N=\{1,2,3,\ldots\}$, $[n]=\{1,\ldots,n\}$ for $n\in\N$, $[0]=\emptyset$\\
$\gamma_n$: restriction map in various contexts, (p.\,\pageref{Sampl:not:gamma_n-matr}/l.\,17, p.\,\pageref{Sampl:not:gamma_n-p}/l.\,19)\\
$\xi f=\int\xi(dx)f(x)$ for a measure $\xi$ and a function $f$, p.\,\pageref{not:integral}/l.\,-17\\
$\Mb(E)$: set of bounded measurable functions $E\to\R$\\
$\varphi(\mu)=\mu\circ\varphi^{-1}$: pushforward measure under a measurable function $\varphi$ (p.\,\pageref{not:pushforward}/l.\,15)

\hparagraph{(Marked) distance matrices}
$\Uu$: space of semi-ultrametrics on $\N$, (p.\,\pageref{Sampl:not:Uu}/l.\,-2)\\
$\hat\Uu$: space of decomposed semi-ultrametrics on $\N$, (p.\,\pageref{Sampl:not:hatUu}/l.\,6)\\
$\Dd$, $\hat\Dd$: spaces of (decomposed) semimetrics on $\N$ (p.\,\pageref{Sampl:not:Dd}/l.\,-2, p.\,\pageref{Sampl:not:hatDd}/l.\,-12)\\
$\alpha$: map that retrieves the semi-ultrametric from a decomposed semi-ultrametric (p.\,\pageref{Sampl:not:alpha}/l.\,4)\\
$\beta:\Uu\to\hat\Uu$: decomposition map into the external branches and the remaining subtree (p.\,\pageref{Sampl:not:beta}/l.\,12)
\\
$\Upsilon(\rho)$: vector of the lengths of the external branches in the coalescent tree associated with $\rho$ (p.\,\pageref{Sampl:not:Upsilon}/l.\,11)

\hparagraph{(Marked) metric measure spaces}
$\M$: space of isomorphy classes of metric measure spaces (p.\,\pageref{Sampl:not:M}/l.\,2)\\
$\U$: space of isomorphy classes of ultrametric measure spaces (p.\,\pageref{Sampl:not:U}/l.\,-4)\\
$\hat\M$, $\hat\U$: spaces of isomorphy classes of marked metric measure spaces (p.\,\pageref{Sampl:not:hatM}/l.\,8, p.\,\pageref{Sampl:not:hatU}/l.\,-1)\\
$\nu^\chi$: distance matrix distribution of $\chi\in\M$ (p.\,\pageref{Sampl:not:dmd}/l.\,-8, p.\,\pageref{Sampl:not:nuchi}/l.\,13) or marked distance matrix distribution of $\chi\in\hat\M$ (p.\,\pageref{Sampl:not:mdm}/l.\,10, p.\,\pageref{Sampl:not:nuchi}/l.\,13)\\
$\UUerg$: space of distance matrix distributions (p.\,\pageref{Sampl:not:UUerg}/l.\,-8)\\
$\psi:\Dd\to\M$, $\hat\psi:\Dd\times\R_+^\N\to\hat\M$: construction of (marked) metric measure spaces (p.\,\pageref{Sampl:not:psi}/l.\,-16, p.\,\pageref{Sampl:not:hatpsi}/l.\,-10)\\
$\Dd^*$, $\hat\Dd^*$: sets of (marked) distance matrices with a good sampling measure (p.\,\pageref{Sampl:not:Dds}/l.\,-12, p.\,\pageref{Sampl:not:hatDds}/l.\,-6)\\
$\C_n$, $\C$, $\hat\C_n$, $\hat\C$: sets of bounded differentiable functions with bounded uniformly continuous derivative (p.\,\pageref{Sampl:not:Cn})\\
$\Pi$: set of polynomials on $\U$ (p.\,\pageref{Sampl:not:Pi}/l.\,-10)\\
$\hat\Pi$: set of marked polynomials on $\hat\U$ (p.\,\pageref{Sampl:not:hatPi}/l.\,-8)\\
$\Cc$: a set of test functions on $\UUerg$ (p.\,\pageref{Sampl:not:Cc}/l.\,-6)

\hparagraph{Partitions and semi-partitions}
$\p$: Set of partitions of $\N$\\
$B_i(\pi)$: $i$-th block of a partition $\pi$ (p.\,\pageref{Sampl:not:Bi}/l.\,9)\\
$\#\pi$: number of blocks of a partition $\pi$\\
$K_{i,j}$: partition of $\N$ that contains only $\{i,j\}$ and singleton blocks (p.\,\pageref{Sampl:not:Kij}/l.\,20)\\
$\p_n$: Set of partitions of $[n]$, associated transformations (equation \reff{Sampl:eq:pn-Un})\\
$\mathbf{0}_n=\{\{1\},\ldots,\{n\}\}\in\p_n$\\
$\p^n$: Set of partitions of $\N$ in which the first $n$ integers are not all in different blocks (p.\,\pageref{Sampl:not:p^n}/l.\,16)\\
$\hat\p^n$: Set of partitions of $\N$ in which the first $n$ integers are not all in singleton blocks (p.\,\pageref{Sampl:not:hatp^n}/l.\,2)\\
$\S_n$ set of semi-partitions of $[n]$, associated transformations (p.\,\pageref{Sampl:not:Sn}/l.\,-22, p.\,\pageref{Sampl:not:Sn-transf}/l.\,-15)\\
$\Delta=\{x=(x_1,x_2,\ldots):x_1\geq x_2\geq \ldots 0,|x|_1\leq 1\}$\\
$|x|_p=(\sum_i x_i^p)^{1/p}$ for $x\in\Delta$ (p.\,\pageref{not:1-norm}, l.\,9)\\
$\kappa(x,\cdot)$: paintbox distribution associated with $x\in\Delta$ (p.\,\pageref{Sampl:not:kappa}/l.\,14)

\hparagraph{Genealogy in the lookdown model}
$\eta$: point measure on $(0,\infty)\times\p$ that encodes the reproduction events, (p.\,\pageref{Sampl:not:eta-det}/l.\,23, p.\,\pageref{Sampl:not:eta-Poisson}/l.\,-15)\\
$A_s(t,i)$: level of the ancestor at time $s$ of the particle on level $i$ at time $t$ (p.\,\pageref{Sampl:not:A}/l.\,-10)\\
$\rho_t(i,j)$: genealogical distance (p.\,\pageref{Sampl:not:rhot}/l.\,5)\\
$(r_t,v_t)$: decomposed genealogical distance (p.\,\pageref{Sampl:not:rtvt})\\
$\Xi=\Xi_0+\Xi\{0\}\delta_0$, equation \reff{Sampl:eq:dec-Xi}\\
$H_\Xi$: characteristic measure of $\eta$ (p.\,\pageref{Sampl:not:HXi}/l.\,22)
}

\bibliography{C:/Users/sg/Documents/Bib/diss}
\bibliographystyle{plain}

\normalsize
\paragraph{Acknowledgments.}
This work is part of the author's PhD thesis.
The author thanks Götz Kersting, Anton Wakolbinger, and the referees for comments and suggestions that helped to improve the presentation.
Partial support from the DFG Priority Programme 1590 ``Probabilistic Structures in Evolution'' is acknowledged. In 2017/2018, the author is supported by a postdoctoral fellowship of the Minerva Foundation.
\end{document}